\begin{document}

\theoremstyle{plain}
\newtheorem{thm}{Theorem}[section]
\newtheorem{cor}[thm]{Corollary}
\newtheorem{con}[thm]{Conjecture}
\newtheorem{cla}[thm]{Claim}
\newtheorem{lm}[thm]{Lemma}
\newtheorem{prop}[thm]{Proposition}

\theoremstyle{definition}
\newtheorem{dfn}[thm]{Definition}
\newtheorem{alg}[thm]{Algorithm}
\newtheorem{prob}[thm]{Problem}
\newtheorem{example}[thm]{Example}
\newtheorem{rem}[thm]{Remark}

\renewcommand{\baselinestretch}{1.1}

\title{\bf Gallai-Edmonds Structure Theorem for Weighted Matching Polynomial}
\author{
Cheng Yeaw Ku
\thanks{ Department of Mathematics, National University of Singapore, Singapore 117543. E-mail: matkcy@nus.edu.sg} \and Kok Bin Wong \thanks{
Institute of Mathematical Sciences, University of Malaya, 50603 Kuala Lumpur, Malaysia. E-mail:
kbwong@um.edu.my.} } \maketitle

\begin{abstract}\noindent
In this paper, we prove the Gallai-Edmonds structure theorem for the most general matching polynomial. Our result implies the Parter-Wiener theorem and its recent generalization about the existence of principal submatrices of a Hermitian matrix  whose graph is a tree.
\end{abstract}

\bigskip\noindent
{\sc keywords:} matching polynomial, characteristic polynomial, Gallai-Edmond decomposition, Hermitian matrices, Parter-Wiener theorem

\section{Introduction}

Recently, Chen and Ku \cite{KC} proved an analogue of the celebrated Gallai-Edmonds structure theorem for general roots of the matching polynomial. Their result implies that every connected vertex transitive graph has simple matching polynomial roots. Subsequently, following a line of investigation pursued by Lov\'{a}sz and Plummer \cite{Lo}, Ku and Wong wrote a series of papers \cite{KW1, KW2, KW3, KW4, KW5, KW6} to develop a matching theory for general roots of the matching polynomial. In this paper, we shall prove the Gallai-Edmonds structure theorem for the most general matching polynomial. Surprisingly, our result implies the Parter-Wiener theorem and its recent generalization by Johnson, Duarte and Saiago \cite{Johnson2} about the existence of principal submatrices of a Hermitian matrix  whose graph is a tree.

All graphs in this paper are simple and finite. The vertex set and the edge set of a graph $G$ will be denoted by $V(G)$ and $E(G)$ respectively. Recall that an $r$-\emph {matching} in a graph $G$ is a set of $r$ edges, no two of which have a vertex in common. The number of $r$-matchings in $  G$ will be denoted by $p( G,r)$. We set $p(G,0)=1$ and define the \emph {matching polynomial} of $G$ by
\begin {equation}
\mu ( G,x)=\sum_{r=0}^{\lfloor n/2\rfloor} (-1)^rp(G,r)x^{n-2r},\notag
\end {equation}
where $n=\vert V(G)\vert$.

In this paper we shall consider weighted versions of the matching polynomial. From now on, we assign a non-zero complex number $w(e)$ to every edge $e$ of our graph $G$ (we shall give a reason why we do not want $w$ to take zero value later). We can view $w$ as a function on $E(G)$ and call $w$ the \emph{edge weight function}. We also denote an edge by $e_{uv}$ to emphasize that the edge has endpoints $u$ and $v$. For each complex number $y=a+bi\in\mathbb C$, we denote its conjugate by $\overline y=a-bi$ and its magnitude by $\vert y\vert=\sqrt{a^2+b^2}$. Also for any set $S$, we denote the number of elements in $S$ by $\vert S\vert$. Although the notations for the magnitude of a complex number and the number of element in a set look similar, they will not cause any confusion.

For each $ A \subseteq E(G)$, we define $w(A)=\prod_{e \in A} w(e)$. We set $w(\varnothing)=1$. Let $\mathcal{M}(G)$ denote the set of all matchings of $G$ including the empty set $\varnothing$. The \emph{edge weighted matching polynomial} of $G$ is defined by

\[ \mu_{w}(G,x) = \sum_{M \in \mathcal{M}(G)} (-1)^{|M|} \vert w(M)\vert^2 x^{n-2|M|}. \]

We denote the set of all $r$-matchings in $G$ by $M_r(G)$ and set $M_0(G)=\{\varnothing\}$. The following lemma is obvious from the definition.

\begin{lm}\label{different_form_generalized_matching}
\[ \mu_{w}(G,x) =\sum_{r=0}^{\lfloor n/2\rfloor}(-1)^r\left (\sum_{M \in M_r(G)} \vert w(M)\vert^2\right) x^{n-2r}, \]
where $n=\vert V(G)\vert$.\qed
\end{lm}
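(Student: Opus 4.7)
The plan is to prove Lemma~\ref{different_form_generalized_matching} by simply regrouping the terms in the definition of $\mu_w(G,x)$ according to the size of the matching. The key observation is that every matching of $G$ is an $r$-matching for exactly one value of $r \in \{0,1,\dots,\lfloor n/2\rfloor\}$, so we have the disjoint union
\[ \mathcal{M}(G) = \bigsqcup_{r=0}^{\lfloor n/2\rfloor} M_r(G). \]

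The steps are as follows. First, I split the sum defining $\mu_w(G,x)$ along this partition, writing
\[ \mu_{w}(G,x) = \sum_{M \in \mathcal{M}(G)} (-1)^{|M|} \vert w(M)\vert^2 x^{n-2|M|} = \sum_{r=0}^{\lfloor n/2\rfloor} \sum_{M \in M_r(G)} (-1)^{|M|} \vert w(M)\vert^2 x^{n-2|M|}. \]
Next, for each fixed $r$ and each $M \in M_r(G)$, we have $|M|=r$, so $(-1)^{|M|} = (-1)^r$ and $x^{n-2|M|} = x^{n-2r}$. These quantities are independent of the particular matching $M$ and may be pulled out of the inner sum, yielding the desired identity.

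There is essentially no obstacle here: the lemma is a purely notational restatement of the definition, grouping contributions by matching size. I expect the proof to consist of a single displayed equation with a brief justification, after which the \texttt{qed} is immediate.
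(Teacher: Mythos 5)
Your proposal is correct and matches the paper, which states this lemma without proof as ``obvious from the definition'': partitioning $\mathcal{M}(G)$ by matching size and pulling out the constant factors $(-1)^r$ and $x^{n-2r}$ is exactly the intended argument. Nothing is missing.
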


Using Lemma \ref{different_form_generalized_matching}, it is not hard to deduce the followings.

\begin{lm}\label{zer0_root_generalized_matching} For any edge weight function $w$, zero is a root of $\mu_{w}(G,x)$ if and only if $G$ does not have a perfect matching. \qed
\end{lm}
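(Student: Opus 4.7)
The plan is to inspect the constant term of $\mu_w(G,x)$ using the explicit expression from Lemma~\ref{different_form_generalized_matching} and observe that zero is a root of $\mu_w(G,x)$ if and only if this constant term vanishes.

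First I would split into two cases according to the parity of $n=\vert V(G)\vert$. If $n$ is odd, then every exponent $n-2r$ in the sum is strictly positive, so $x$ divides $\mu_w(G,x)$ and zero is automatically a root; on the other hand $G$ cannot have a perfect matching since $n$ is odd, so both sides of the equivalence hold. If $n$ is even, the only term of the sum that contributes to the constant term is the one with $r=n/2$, namely
\[
(-1)^{n/2}\sum_{M\in M_{n/2}(G)} \vert w(M)\vert^2,
\]
so zero is a root of $\mu_w(G,x)$ if and only if this expression equals $0$.

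The key step is to show that this sum vanishes precisely when $G$ has no perfect matching. Since each $|w(M)|^2$ is a nonnegative real number, the sum is zero if and only if every summand is zero. Here I would use the standing assumption that $w(e)\neq 0$ for every edge $e$: for any perfect matching $M$ we have $|w(M)|^2=\prod_{e\in M}|w(e)|^2>0$, so a nonzero term exists as soon as $M_{n/2}(G)\neq\varnothing$. Conversely, if $G$ has no perfect matching, then $M_{n/2}(G)=\varnothing$ and the sum is empty, hence zero.

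The only potential subtlety, and the reason the authors insist on $w(e)\neq 0$, is precisely the last observation: without that hypothesis the sum could vanish even in the presence of perfect matchings (for instance, if every perfect matching contains an edge of weight zero), which would break the ``only if'' direction. I do not anticipate any other obstacle, since everything else is a direct reading off of coefficients from the formula in Lemma~\ref{different_form_generalized_matching}.
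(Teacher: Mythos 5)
Your proof is correct and is exactly the argument the paper intends when it leaves this lemma as an immediate consequence of Lemma~\ref{different_form_generalized_matching}: read off the constant term, which is $(-1)^{n/2}\sum_{M\in M_{n/2}(G)}\vert w(M)\vert^2$ for $n$ even, and use the standing hypothesis $w(e)\neq 0$ to see that this nonnegative sum vanishes precisely when there is no perfect matching. Your remark about why the nonvanishing of $w$ is essential here is also exactly the point the authors make in the introduction.
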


\begin{lm}\label{edge_weighted_to_matching} Suppose $w(e)=1$ for all $e \in E(G)$. Then $\mu_w(G,x)=\mu(G,x)$. \qed
\end{lm}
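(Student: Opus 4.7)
The plan is to prove this via a direct substitution into the formula from Lemma \ref{different_form_generalized_matching}. The only content is that the weighting trivializes when all edge weights equal $1$, so the sum collapses to the standard count $p(G,r)$ of $r$-matchings.

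First I would observe that under the hypothesis $w(e)=1$ for all $e \in E(G)$, the definition $w(A)=\prod_{e\in A} w(e)$ gives $w(M)=1$ for every matching $M$ (including the empty matching, where $w(\varnothing)=1$ by convention). Consequently $\lvert w(M)\rvert^2 = 1$ for every $M \in \mathcal{M}(G)$, so in particular $\sum_{M \in M_r(G)} \lvert w(M)\rvert^2 = \lvert M_r(G)\rvert = p(G,r)$ for each $r$ with $0 \le r \le \lfloor n/2 \rfloor$.

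Next I would apply Lemma \ref{different_form_generalized_matching}, substitute the identity just established, and read off the definition of $\mu(G,x)$ given in the introduction:
\[
\mu_w(G,x) = \sum_{r=0}^{\lfloor n/2 \rfloor} (-1)^r \left( \sum_{M \in M_r(G)} \lvert w(M)\rvert^2 \right) x^{n-2r} = \sum_{r=0}^{\lfloor n/2 \rfloor} (-1)^r p(G,r)\, x^{n-2r} = \mu(G,x).
\]
There is no real obstacle here; the lemma is essentially a sanity check that the edge weighted matching polynomial specializes to the ordinary matching polynomial, and the argument is a one-line substitution once Lemma \ref{different_form_generalized_matching} is in hand.
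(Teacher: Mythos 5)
Your argument is correct and is exactly the one the paper intends: the lemma is stated without proof, but the preceding sentence ("Using Lemma \ref{different_form_generalized_matching}, it is not hard to deduce the followings") points to precisely your substitution $\lvert w(M)\rvert^2=1$, which collapses $\sum_{M\in M_r(G)}\lvert w(M)\rvert^2$ to $p(G,r)$. Nothing to add.
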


By Lemma \ref{zer0_root_generalized_matching}, the fact that zero is a root of $\mu_{w}(G,x)$ depends only on the structure of the graph $G$  and does not depend on the edge weight function. By Lemma \ref{edge_weighted_to_matching}, if  the edge weight function takes only the value 1, then the edge weighted matching polynomial is the usual matching polynomial.

Let $u\in V(G)$. The graph obtained from $G$ by deleting the vertex $u$ and all edges that contain $u$ will be denoted by $G\setminus u$. The weight function on $G\setminus u$ is induced by the weight function $w$ on $G$. Inductively if $u_1,\dots, u_k\in V(G)$, $G\setminus u_1\dots u_k=(G\setminus u_1\dots u_{k-1})\setminus u_k$. For convenience if $H$ is a subgraph of $G$ then we shall denote $G\setminus V(H)$ by $G\setminus H$. If $e_1,\dots, e_m\in E(G)$ then  the graph obtained from $G$ by deleting all the edges $e_1,\dots ,e_m$ will  be denoted by $G-e_1\dots e_m$. The weight function on $G-e_1\dots e_m$ is induced by the weight function $w$ on $G$.

If we were to allow  $w$ to take zero value then $\mu_{w}(G,x) =\mu_{w}(G-e_1\dots e_m,x)$ where $w(e_1)=\cdots=w(e_k)=0$. So we may remove the edges with zero weight and the resulting graph has the same edge weighted matching polynomial. This is the reason why we do not allow $w$ to take zero value.

 The edge weighted matching polynomial $\mu_{w}(G,x)$ is a special case of the original multivariate matching polynomial introduced by Heilmann and Lieb \cite{HL}, who proved that all roots of $\mu_w(G,x)$ are real (\cite[Theorem 4.2]{HL}). As a consequence,  the roots of the usual matching polynomial $\mu(G,x)$ are real (Lemma \ref{edge_weighted_to_matching}). This fact was also proved by Godsil in his book  \cite[Corollary 1.2 on p. 97]{G0} via the classical recursive approach (see also \cite[Corollary 5.2]{GG}). Recently, by generalizing Foata's combinatorial proof of the Mehler formula for Hermite polynomials to matching polynomials, Lass \cite[Corollary on p. 439]{BL} proved that all the roots of $\mu_{w}(G,x)$ are real.

Now let us further generalize the edge weighted matching polynomial by assigning a real number $w_1(u)$ to every vertex $u$ of our graph $G$ (we allow $w_1$ to take zero value). We can view $w_1$ as a function on $V(G)$ and call $w_1$ the \emph{vertex weight function}. The pair $(w,w_1)$ will be called the \emph{weight function}. For each non-empty set $S\subseteq V(G)$, let $H_G(S)$ be the subgraph of $G$ induced by the vertices in $S$, that is $V(H_G(S))=S$ and $e_{uv}\in E(H_G(S))$ if and only if $e_{uv}\in E(G)$ and $u,v\in S$.

For each $S\subseteq V(G)$, we define $w_1(G\setminus S)=\prod_{u \in V(G\setminus S)} w_1(u)$. We set $w_1(\varnothing)=1$, $H_G(\varnothing)=\varnothing$ and $\mu_{w} (\varnothing,x)=1$. The \emph{weighted matching polynomial} of $G$ is defined by

\[ \eta_{(w,w_1)}(G,x) =\sum_{S\subseteq V(G)}(-1)^{\vert V(G\setminus S)\vert}w_1(G\setminus S)\mu_{w} (H_G(S),x). \]
It turns out that the weighted matching polynomial can be rewritten as

\[ \eta_{(w,w_1)}(G,x) = \sum_{M \in \mathcal{M}(G)} \left( \left(\prod_{e \in M} w(e) \right) \left( \prod_{u \in V(G) \setminus V(M)} (x-w_{1}(u))\right)\right) \]
which was proved by Averbouch and Makowsky \cite{AM} to be the most general nontrivial polynomial satisfying the matching polynomial recurrence relations.

\begin{example}\label{example_1} Let $G$ be the graph in Figure 1. Let $w(e_{u_1u_2})=2+i$, $w_1(u_1)=1$ and $w_1(u_2)=3$. Note that all the possible subsets of $V(G)$ are $S_1=\varnothing$, $S_2=\{u_1\}$, $S_3=\{u_2\}$ and $S_4=\{u_1,u_2\}$. Now $\mu_{w} (H_G(S_1),x)=1$, $\mu_{w} (H_G(S_2),x)=x$, $\mu_{w} (H_G(S_3),x)=x$ and $\mu_{w} (H_G(S_4),x)=x^2-\vert 2+i\vert^2=x^2-5$. Also $w_1(G\setminus S_1)=w_1(u_1)w_1(u_2)=3$, $w_1(G\setminus S_2)=w_1(u_2)=3$, $w_1(G\setminus S_3)=w_1(u_1)=1$ and $w_1(G\setminus S_4)=1$. Therefore $\eta_{(w,w_1)}(G,x)=(x^2-5)-(1)x-(3)x+3=x^2-4x-2$.

\begin{center}
\begin{pspicture}(0,0)(3,2)
\cnodeput(1, 1){1}{}
\cnodeput(2, 1){2}{}
\ncline{1}{2}
\rput(1.5,0){\textnormal{Figure 1}.}
\rput(0,1){$G=$}
\rput(1,1.3){$u_1$}
\rput(2,1.3){$u_2$}
\end{pspicture}
\end{center}\qed
\end{example}

\begin{example}\label{example_2} Let $G$ be the graph in Figure 2. Let $w(e_{v_1v_2})=1+2i$, $w(e_{v_2v_3})=2-7i$, $w(e_{v_1v_3})=-3+2i$, $w_1(v_1)=1$, $w_1(v_2)=2$ and $w_1(v_3)=3$. Note that all the possible subsets of $V(G)$ are $S_1=\varnothing$, $S_2=\{v_1\}$, $S_3=\{v_2\}$, $S_4=\{v_3\}$, $S_5=\{v_1,v_2\}$, $S_6=\{v_1,v_3\}$, $S_7=\{v_2,v_3\}$, $S_8=\{v_1,v_2, v_3\}$. Now $\mu_{w} (H_G(S_1),x)=1$, $\mu_{w} (H_G(S_2),x)=x$, $\mu_{w} (H_G(S_3),x)=x$, $\mu_{w} (H_G(S_4),x)=x$, $\mu_{w} (H_G(S_5),x)=x^2-5$, $\mu_{w} (H_G(S_6),x)=x^2-13$, $\mu_{w} (H_G(S_7),x)=x^2-53$ and $\mu_{w} (H_G(S_8),x)=x^3-(5+13+53)x=x^3-71x$. Also $w_1(G\setminus S_1)=w_1(v_1)w_1(v_2)w_1(v_3)=6$, $w_1(G\setminus S_2)=w_1(v_2)w_1(v_3)=6$, $w_1(G\setminus S_3)=w_1(v_1)w_1(v_3)=3$, $w_1(G\setminus S_4)=w_1(v_1)w_1(v_2)=2$, $w_1(G\setminus S_5)=w_1(v_3)=3$, $w_1(G\setminus S_6)=w_1(v_2)=2$, $w_1(G\setminus S_7)=w_1(v_1)=1$ and $w_1(G\setminus S_8)=1$. Therefore $\eta_{(w,w_1)}(G,x)=(x^3-71x)-(x^2-53)-2(x^2-13)-3(x^2-5)+2(x)+3(x)+6(x)-6=x^3-6x^2-60x+88$.

\begin{center}
\begin{pspicture}(0,0)(4,3)
\cnodeput(1, 1){1}{}
\cnodeput(1.5, 2){2}{}
\cnodeput(2, 1){3}{}
\ncline{1}{2}
\ncline{1}{3}
\ncline{2}{3}
\rput(1.5,0){\textnormal{Figure 2}.}
\rput(0,1){$G=$}
\rput(1,0.7){$v_1$}
\rput(1.5,2.3){$v_2$}
\rput(2,0.7){$v_3$}
\end{pspicture}
\end{center}\qed
\end{example}

For consistency, we set $\eta_{(w,w_1)}(\varnothing,x) =1$. The following three lemmas are obvious.

\begin{lm}\label{weighted_to_edge_weighted_matching} If $w_1(u)=0$ for all $u\in V(G)$ then
\[ \eta_{(w,w_1)}(G,x) =\mu_{w} (G,x). \] \qed
\end{lm}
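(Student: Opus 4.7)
The plan is to apply the defining formula
\[
\eta_{(w,w_1)}(G,x)=\sum_{S\subseteq V(G)}(-1)^{|V(G\setminus S)|}\,w_1(G\setminus S)\,\mu_{w}(H_G(S),x)
\]
directly and observe that under the hypothesis $w_1\equiv 0$ almost every term of the sum vanishes. Indeed, for any proper subset $S\subsetneq V(G)$ the complement $V(G\setminus S)$ is nonempty, so the factor $w_1(G\setminus S)=\prod_{u\in V(G\setminus S)} w_1(u)$ is a product containing at least one factor equal to $0$, and the summand indexed by $S$ is $0$. Hence only the term $S=V(G)$ can contribute.

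For that surviving term one has $V(G\setminus S)=\varnothing$, so $(-1)^{|V(G\setminus S)|}=1$ and, by the stated convention $w_1(\varnothing)=1$, the factor $w_1(G\setminus S)$ equals $1$. Since $H_G(V(G))=G$, the unique nonzero summand is exactly $\mu_{w}(G,x)$, which proves the identity. The edge case $V(G)=\varnothing$ is handled separately by the conventions $\eta_{(w,w_1)}(\varnothing,x)=1=\mu_{w}(\varnothing,x)$.

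I do not anticipate any real obstacle: the result is a direct consequence of the definition. The single point that requires mild care is to remember the convention $w_1(\varnothing)=1$, which prevents the surviving term $S=V(G)$ from being swept away along with the others; once this is observed, the proof is a one-line collapse of the defining sum.
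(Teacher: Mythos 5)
Your proof is correct and is exactly the argument the paper has in mind: the lemma is stated as an immediate consequence of the definition, and your observation that every summand with $S\subsetneq V(G)$ vanishes because $w_1(G\setminus S)$ contains a zero factor, leaving only the $S=V(G)$ term $\mu_w(G,x)$, is the intended one-line justification. No issues.
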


\begin{lm}\label{weighted_matching_with_zero_vertex_weight} Let $u_1,\dots,u_m\in V(G)$ be such that $w_1(u_1)=\cdots=w_1(u_m)=0$. Then
\[ \eta_{(w,w_1)}(G,x) =\sum_{\substack {S\subseteq V(G),\\ \{u_1,\dots, u_m\}\subseteq S}}(-1)^{\vert V(G\setminus S)\vert}w_1(G\setminus S)\mu_{w} (H_G(S),x). \] \qed
\end{lm}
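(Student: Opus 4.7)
The plan is to start from the defining sum
\[ \eta_{(w,w_1)}(G,x) = \sum_{S\subseteq V(G)}(-1)^{|V(G\setminus S)|}w_1(G\setminus S)\mu_w(H_G(S),x) \]
and argue that every term indexed by an $S$ which fails to contain $\{u_1,\dots,u_m\}$ contributes zero. Specifically, if some $u_i \notin S$, then $u_i \in V(G\setminus S)$, and since
\[ w_1(G\setminus S) = \prod_{u\in V(G\setminus S)} w_1(u), \]
the factor $w_1(u_i) = 0$ appears in the product, forcing $w_1(G\setminus S) = 0$ and hence the whole summand to vanish. Only subsets $S$ with $\{u_1,\dots,u_m\}\subseteq S$ can give nonzero contributions, so restricting the index set to such $S$ leaves the sum unchanged.

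There is essentially no obstacle here; it is a direct consequence of unwinding the definition of $w_1(G\setminus S)$ as a product over vertices outside $S$, together with the hypothesis $w_1(u_i)=0$. The only thing to handle carefully is the boundary case when the restricted index set could be empty (i.e.\ if $\{u_1,\dots,u_m\}$ were not a subset of $V(G)$), but this is ruled out by assumption, and in any event the conventions $w_1(\varnothing)=1$, $H_G(\varnothing)=\varnothing$, $\mu_w(\varnothing,x)=1$ already guarantee consistency. So the whole proof amounts to the one-line observation above.
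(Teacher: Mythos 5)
Your proof is correct and is exactly the argument the paper has in mind: the lemma is stated in the paper as one of the ``obvious'' lemmas with no written proof, and the intended justification is precisely your observation that any $S$ omitting some $u_i$ makes $w_1(G\setminus S)$ contain the factor $w_1(u_i)=0$, killing that summand. Nothing further is needed.
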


\begin{lm}\label{degree_weighted} The degree of $\eta_{(w,w_1)}(G,x)$ is equal to the degree of $\mu_{w}(G,x)$, which is $\vert V(G)\vert$. \qed
\end{lm}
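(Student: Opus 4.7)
The plan is to read off the top-degree coefficient directly from the two explicit expansions available for each polynomial; there is no real obstacle, since once the correct representation is used the claim is immediate.

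First, for $\mu_w(G,x)$, I would invoke Lemma \ref{different_form_generalized_matching}. The only term in that sum with $x$-exponent $n$ is the $r=0$ term, and since $M_0(G)=\{\varnothing\}$ with $|w(\varnothing)|^2=1$, the coefficient of $x^n$ is exactly $1$. For every $r\geq 1$ the exponent $n-2r$ is strictly smaller, so no cancellation can occur at the top. Hence $\mu_w(G,x)$ is monic of degree $n=|V(G)|$.

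Next, for $\eta_{(w,w_1)}(G,x)$ I would use the alternative matching-sum expression
\[ \eta_{(w,w_1)}(G,x) = \sum_{M \in \mathcal{M}(G)} \Bigl(\prod_{e \in M} w(e)\Bigr)\Bigl(\prod_{u \in V(G)\setminus V(M)} (x-w_1(u))\Bigr), \]
which the paper has already recorded. For a fixed matching $M$, the factor $\prod_{e\in M} w(e)$ is a constant in $x$, while $\prod_{u\notin V(M)}(x-w_1(u))$ is a monic polynomial in $x$ of degree $|V(G)|-2|M|=n-2|M|$. Thus the degree in $x$ of the summand indexed by $M$ is $n-2|M|$. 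The maximum value $n$ is attained uniquely at $M=\varnothing$, where the contribution is $\prod_{u\in V(G)}(x-w_1(u))$, which has leading coefficient $1$. No other matching contributes a term of degree $n$, so the coefficient of $x^n$ in $\eta_{(w,w_1)}(G,x)$ equals $1$.

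Combining the two computations, both $\mu_w(G,x)$ and $\eta_{(w,w_1)}(G,x)$ are monic polynomials of degree $n=|V(G)|$, which establishes the lemma. The only thing one needs to be mindful of is that the top coefficient does not rely on the vertex weights $w_1(u)$ (they affect only lower-order terms through the linear factors $x-w_1(u)$), so allowing $w_1$ to vanish causes no issue here; similarly, the nonvanishing hypothesis on $w$ plays no role in identifying the top coefficient.
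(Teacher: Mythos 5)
Your proof is correct. The paper declares this lemma obvious and gives no proof, and your argument --- reading the coefficient of $x^{n}$ off the $r=0$ term of Lemma \ref{different_form_generalized_matching} for $\mu_{w}(G,x)$, and off the $M=\varnothing$ term of the matching-sum form for $\eta_{(w,w_1)}(G,x)$ --- is exactly the kind of verification intended; it even yields the slightly stronger fact that both polynomials are monic. The only remark worth adding is that one can argue just as quickly from the paper's primary definition $\eta_{(w,w_1)}(G,x)=\sum_{S\subseteq V(G)}(-1)^{\vert V(G\setminus S)\vert}w_1(G\setminus S)\mu_{w}(H_G(S),x)$: the $S=V(G)$ term is $\mu_{w}(G,x)$, of degree $n$, while every other term has degree $\vert S\vert<n$ by Lemma \ref{different_form_generalized_matching}, so no appeal to the alternative matching-sum expression is needed.
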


Let $G_1$ and $G_2$ be graphs with weight function $(w,w_1)$ and $(w',w_1')$, respectively. The two graphs are said to be \emph{weight-isomorphic} if there is a bijection $f :V(G_1)\rightarrow V(G_2)$ such that
\begin{itemize}
\item [(a)] $e_{f(u)f(v)}\in E(G_2)$ if and only if $e_{uv}\in E(G_1)$,
\item [(b)] $w'(e_{f(u)f(v)})=w(e_{uv})$ for all $e_{uv}\in E(G_1)$,
\item [(c)] $w_1'(f(u))=w_1(u)$ for all $u\in V(G_1)$.
\end{itemize}
Note that if conditions (b) and (c) are removed then this is just the `usual' isomorphism.

\begin{example}\label{example_isomorphic} Let $G_1$ and $G_2$ be the graphs in Figure 3. The edge weight functions for both graphs take value 1 for all the edges, whereas the vertex weight functions are as stated. Note that they are not weight-isomorphic (even though they are isomorphic in the `usual' sense when the weights are removed).

\begin{center}
\begin{pspicture}(0,0)(6,3)
\cnodeput(1, 2){1}{}
\cnodeput(1.5, 1.5){2}{}
\cnodeput(1.5, 1){3}{}
\cnodeput(2, 2){4}{}
\cnodeput(4, 2){5}{}
\cnodeput(4.5, 1.5){6}{}
\cnodeput(4.5, 1){7}{}
\cnodeput(5, 2){8}{}
\ncline{1}{2}
\ncline{2}{3}
\ncline{2}{4}
\ncline{5}{6}
\ncline{6}{7}
\ncline{6}{8}
\rput(3,0){\textnormal{Figure 3}.}
\rput(0.5,1.5){$G_1=$}
\rput(3.5,1.5){$G_2=$}
\rput(0.7,2){2}
\rput(1.5,0.7){3}
\rput(1.8,1.5){3}
\rput(2.3,2){4}
\rput(3.7,2){2}
\rput(4.5,0.7){3}
\rput(4.8,1.5){9}
\rput(5.3,2){4}
\end{pspicture}
\end{center}\qed
\end{example}

The following lemma can be proved easily.

\begin{lm}\label{isomorphi_weighted} Let $G_1$ and $G_2$ be graphs with weight function $(w,w_1)$ and $(w',w_1')$, respectively. If $G_1$ is weight-isomorphic to $G_2$, then $\eta_{(w,w_1)}(G_1,x)=\eta_{(w',w_1')}(G_2,x)$. \qed
\end{lm}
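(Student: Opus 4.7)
The plan is to use the explicit matching-sum expansion of the weighted matching polynomial that is stated just after the primary definition, namely
\[ \eta_{(w,w_1)}(G,x) = \sum_{M \in \mathcal{M}(G)} \left(\prod_{e \in M} w(e)\right)\left(\prod_{u \in V(G) \setminus V(M)} (x - w_1(u))\right), \]
and to show that the weight-isomorphism $f : V(G_1) \to V(G_2)$ induces a term-by-term matching of the two sides.

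First I would observe that condition (a) in the definition of weight-isomorphism guarantees that $f$ lifts to a bijection
\[ \Phi : \mathcal{M}(G_1) \longrightarrow \mathcal{M}(G_2), \qquad \Phi(M) = \{ e_{f(u)f(v)} : e_{uv} \in M \}. \]
Indeed, (a) forces $\Phi(M)$ to be a subset of $E(G_2)$, the injectivity of $f$ preserves the property that no two edges share a vertex, and applying the same construction to $f^{-1}$ yields the inverse bijection. In particular, $f$ restricts to a bijection $V(G_1) \setminus V(M) \to V(G_2) \setminus V(\Phi(M))$ for every $M \in \mathcal{M}(G_1)$.

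Next, I would compare the contributions of $M$ and $\Phi(M)$ to their respective sums. Condition (b) gives, edge by edge, $w'(e_{f(u)f(v)}) = w(e_{uv})$, so
\[ \prod_{e_{uv} \in M} w(e_{uv}) = \prod_{e' \in \Phi(M)} w'(e'). \]
Condition (c) gives, vertex by vertex, $w_1'(f(u)) = w_1(u)$, so
\[ \prod_{u \in V(G_1) \setminus V(M)} (x - w_1(u)) = \prod_{u' \in V(G_2) \setminus V(\Phi(M))} (x - w_1'(u')). \]
Multiplying these two identities shows that the term indexed by $M$ in $\eta_{(w,w_1)}(G_1,x)$ equals the term indexed by $\Phi(M)$ in $\eta_{(w',w_1')}(G_2,x)$.

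Finally, summing over $M \in \mathcal{M}(G_1)$ and using that $\Phi$ is a bijection yields $\eta_{(w,w_1)}(G_1,x) = \eta_{(w',w_1')}(G_2,x)$. The proof is entirely bookkeeping, and I do not anticipate any real obstacle; the only thing that could trip one up is forgetting to verify that $\Phi$ is genuinely a bijection between matchings (not just a map on edge sets), which is ensured by the ``iff'' in condition (a) applied to both $f$ and $f^{-1}$.
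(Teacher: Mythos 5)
Your proof is correct and is essentially the intended (and omitted) argument: the paper marks this lemma as easy and gives no proof, and the natural route is exactly your term-by-term bijection, whether one expands over matchings or over the subsets $S\subseteq V(G)$ in the primary definition. One caveat worth noting: the matching-sum expansion you quote, as printed in the paper with edge factor $\prod_{e\in M}w(e)$, is not actually consistent with the primary definition (testing it on Example 1.8 gives $x^2-4x+5+i$ rather than $x^2-4x-2$; each edge of $M$ should contribute $-\vert w(e)\vert^2$), but your argument is unaffected since condition (b) of weight-isomorphism preserves $\vert w(e)\vert^2$ just as well as $w(e)$, so the term indexed by $M$ still equals the term indexed by $\Phi(M)$ under the corrected expansion.
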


Now by Lemma \ref{weighted_to_edge_weighted_matching}, the weighted matching polynomial $\eta_{(w,w_1)}(G,x)$ is a generalization of the edge weighted matching polynomial $\mu_w(G,x)$. So it is quite natural to ask whether the roots of $\eta_{(w,w_1)}(G,x)$ are real or not. In Section 3, we give an affirmative answer using Godsil's approach \cite{G0} (Corollary \ref{root_graph}). This generalizes the result of Lass \cite[Corollary on p. 439]{BL}.

Let $G$ be a graph with $V(G)=\{1,2,\dots, n\}$ and $B_{(w,w_1)}(G)=[b_{uv}]$ be the $n\times n$ matrix with
\begin{equation}
b_{uv}=\begin{cases}
w(e_{uv}), & \textnormal{if $e_{uv}\in E(G)$ and $u<v$;}\\
w_1(u), & \textnormal{if $u=v$;}\\
\overline {w(e_{vu})}, & \textnormal{if $e_{vu}\in E(G)$ and $u>v$;}\\
0, & \textnormal{otherwise.}
\end{cases}\notag
\end{equation}
We call $B_{(w,w_1)}(G)$ the \emph{weighted adjacency matrix} of $G$. Note that $B_{(w,w_1)}(G)$ is a \emph{Hermitian} matrix, that is $\overline {b_{uv}}=b_{vu}$ for all $u,v$. The \emph{weighted characteristic polynomial} of $G$ is defined by
\begin {equation}
\phi_{(w,w_1)}(G,x)=\det (xI-B_{(w,w_1)}(G)).\notag
\end {equation}

\begin{example}\label{example_3} Let $G$ and $(w,w_1)$ be as given in Example \ref{example_1}. Here we assume $V(G)=\{1,2\}$ where $u_1\equiv 1$ and $u_2\equiv 2$.
Then
\begin{equation}
B_{(w,w_1)}(G)=\begin{pmatrix}
1 & 2+i\\
2-i & 3
\end{pmatrix},\notag
\end{equation}
and $\phi_{(w,w_1)}(G,x)=x^2-4x-2$.\qed
\end{example}

\begin{example}\label{example_4} Let $G$ and $(w,w_1)$ be as given in Example \ref{example_2}.  Here we assume $V(G)=\{1,2,3\}$ where $v_1\equiv 1$, $v_2\equiv 2$ and $v_3\equiv 3$.
Then
\begin{equation}
B_{(w,w_1)}(G)=\begin{pmatrix}
1 & 1+2i & -3+2i\\
1-2i & 2 & 2-7i\\
-3-2i & 2+7i &3
\end{pmatrix},\notag
\end{equation}
and $\phi_{(w,w_1)}(G,x)=x^3-6x^2-60x+196$.\qed
\end{example}

Note that if $w(e)=1$ for all $e \in E(G)$ and $w_1(u)=0$ for all $u\in V(G)$, we recover the usual characteristic polynomial of $G$ and $B_{(w,w_1)}(G)$ is the usual adjacency matrix. Godsil and Gutman \cite[Theorem 4]{GG} first proved  the relation between the characteristic polynomial of $G$ and its matching polynomial. In Section 2, we shall show that similar relation holds for weighted characteristic polynomial and weighted matching polynomial (Theorem \ref{main_1}). As a consequence, the weighted characteristic polynomial of a graph and its the weighted matching polynomial are identical if and only if the graph is a forest, provided that the edge weight function $w$ is positive real-valued (Corollary \ref{tree_matching_characteristic_identical_converse}).

We would like to remark that `ordering' in $V(G)$ is very important. Different ordering in $V(G)$ could give different weighted characteristic polynomial (see Example \ref{example_ordering}). This also means that in general weight-isomorphic graphs might not have the same weighted characteristic polynomials. However if $G$ is a tree or $G$ is any graph with real valued edge weight function, then the `ordering' in $V(G)$ will have no effect on its weighted characteristic polynomial (Corollary \ref{ordering_tree_characteristic} and Corollary \ref{ordering_edge_weight_real_characteristic}, respectively).

\begin{example}\label{example_ordering} Let $G_1$ and $G_2$ be the graphs in Figure 4. Let $V(G_1)=\{u_1,u_2,u_3,u_4\}$ and $V(G_2)=\{v_1,v_2,v_3,v_4\}$. Suppose the vertex weight functions for both graphs take value 0 for all the vertices, whereas the edge weight functions are as given in the figure.

\begin{center}
\begin{pspicture}(0,0)(7,3)
\cnodeput(1, 1){1}{}
\cnodeput(2, 1){2}{}
\cnodeput(2, 2){3}{}
\cnodeput(1, 2){4}{}
\cnodeput(5, 1){5}{}
\cnodeput(6, 1){6}{}
\cnodeput(6, 2){7}{}
\cnodeput(5, 2){8}{}
\ncline{1}{2}
\ncline{2}{3}
\ncline{3}{4}
\ncline{4}{1}
\ncline{5}{6}
\ncline{6}{7}
\ncline{7}{8}
\ncline{5}{8}
\rput(3.5,0){\textnormal{Figure 4}.}
\rput(0,1.5){$G_1=$}
\rput(4,1.5){$G_2=$}
\rput(1.5,0.7){$i$}
\rput(1.5,2.3){$i$}
\rput(2.3,1.5){$i$}
\rput(0.7,1.5){$i$}
\rput(5.5,0.7){$i$}
\rput(5.5,2.3){$i$}
\rput(6.3,1.5){$i$}
\rput(4.7,1.5){$i$}
\rput(0.7,0.7){$u_1$}
\rput(2.3,0.7){$u_2$}
\rput(2.3,2.3){$u_3$}
\rput(0.7,2.3){$u_4$}
\rput(4.7,0.7){$v_1$}
\rput(6.3,0.7){$v_3$}
\rput(6.3,2.3){$v_2$}
\rput(4.7,2.3){$v_4$}
\end{pspicture}
\end{center}
Now order the vertices of $G_1$ as follows: $u_1\equiv 1$, $u_2\equiv 2$, $u_3\equiv 3$, $u_4\equiv 4$. Then
\begin{equation}
B_{(w,w_1)}(G_1)=\begin{pmatrix}
0 & i & 0 & i\\
-i & 0 & i & 0\\
0 & -i & 0 & i\\
-i & 0 & -i & 0
\end{pmatrix},\notag
\end{equation}
and $\phi_{(w,w_1)}(G_1,x)=x^4-4x^2+4$.

Suppose we order the vertices of $G_2$ as follows: $v_1\equiv 1$, $v_2\equiv 2$, $v_3\equiv 3$, $v_4\equiv 4$. Then
\begin{equation}
B_{(w,w_1)}(G_2)=\begin{pmatrix}
0 & 0 & i & i\\
0 & 0 & i & i\\
-i & -i & 0 & 0\\
-i & -i & 0 & 0
\end{pmatrix},\notag
\end{equation}
and $\phi_{(w,w_1)}(G_2,x)=x^4-4x^2$. So even though $G_1$ is weight-isomorphic to $G_2$, $\phi_{(w,w_1)}(G_1,x)\neq \phi_{(w,w_1)}(G_2,x)$.\qed
\end{example}

We shall denote the multiplicity of $\theta$ as a root of $\eta_{(w,w_1)}(G,x)$  and $\mu_w(G,x)$ by $\textnormal {mult} (\theta,G,\eta_{(w,w_1)})$ and $\textnormal {mult} (\theta,G,\mu_w)$ respectively. In Section 4, we classify the vertices of $G$ with respect to $\theta$ using Godsil's approach \cite[Section 3]{G2} and study their properties. In Section 5, we develop a Gallai-Edmonds decomposition associated to a root $\theta$ of the weighted matching polynomial (Corollary \ref{Gallai_Edmond_decomposition} and Corollary \ref{Gallai_critical}). In Section 6, we discuss the connection of our result with the classical Gallai-Edmonds decomposition which is associated to root $\theta=0$. In Section 7, we deduce the Parter-Weiner theorem and its generalization.

\section{Weighted matching polynomial and weighted characteristic polynomial}

It is not difficult to verify the following recurrence relations of $\mu_{w}(G,x)$ following the proof in \cite[Theorem 1.1 on p. 2]{G0}. The sketch of the proofs are provided.

\begin{lm}\label{pre_basic_recurrence} Recurrence for $\mu_{w}(G,x)$. ($v \sim u$ means $u$ is adjacent to $v$)
\begin{itemize}
\item [\textnormal{(a)}] $\mu_{w}(  G\cup  H,x)=\mu_{w}(  G,x)\mu_{w}(  H,x)$ where $  G$ and $  H$ are disjoint graphs.
\item [\textnormal{(b)}] $\mu_{w}(  G,x)=\mu_w (  G-e_{uv},x)-\vert w(e_{uv})\vert^2\mu_{w}(  G\setminus uv, x)$ if $e_{uv}$ is an edge of $G$.
\item [\textnormal{(c)}] $\mu_{w}(G,x) = x \mu_w(G \setminus u,x) - \sum_{v \sim u} \vert w(e_{uv})\vert^2 \mu_{w}(G \setminus uv,x)$.
\item [\textnormal{(d)}] $\frac{d}{dx}(\mu_{w}(G,x)) =\sum_{v\in V(G)}\mu_{w}(G \setminus v,x)$.
\end{itemize}
\end{lm}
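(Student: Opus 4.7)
The plan is to derive each identity directly from the expression for $\mu_w(G,x)$ in Lemma \ref{different_form_generalized_matching}, by partitioning $\mathcal{M}(G)$ according to a suitable combinatorial feature and then matching the resulting sum term-by-term with the right-hand side. The arguments will mirror Godsil's unweighted proofs in \cite[Theorem 1.1]{G0}; the only modification is to keep the weight $|w(M)|^2$ along for the ride, which factors multiplicatively across disjoint unions and picks up a factor of $|w(e_{uv})|^2$ whenever a single edge is added to or removed from the matching.

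For part (a), every matching of $G \cup H$ decomposes uniquely as $M_1 \cup M_2$ with $M_i \in \mathcal{M}(G_i)$; since $|w(M_1 \cup M_2)|^2 = |w(M_1)|^2 |w(M_2)|^2$, $|M_1 \cup M_2| = |M_1| + |M_2|$, and the $x$-exponent $|V(G \cup H)| - 2|M_1 \cup M_2|$ splits as $(|V(G)| - 2|M_1|) + (|V(H)| - 2|M_2|)$, the defining sum factors. For part (b), I would split $\mathcal{M}(G)$ into matchings avoiding $e_{uv}$ (which form $\mathcal{M}(G - e_{uv})$ and contribute $\mu_w(G - e_{uv}, x)$) and matchings containing $e_{uv}$ (in bijection with $\mathcal{M}(G \setminus uv)$ via $M \mapsto M \setminus \{e_{uv}\}$, contributing $-|w(e_{uv})|^2 \mu_w(G \setminus uv,x)$ once the flipped sign and the two removed vertices in the $x$-exponent are taken into account). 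Part (c) follows from the analogous dichotomy on whether $u$ is saturated by $M$: unsaturated matchings are exactly the matchings of $G \setminus u$, and the one spare vertex contributes the extra factor $x$; saturated matchings split further by the neighbor $v \sim u$ to which $u$ is matched, yielding $-\sum_{v \sim u} |w(e_{uv})|^2 \mu_w(G \setminus uv, x)$.

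For part (d), I would differentiate termwise using Lemma \ref{different_form_generalized_matching} to obtain
\[
\frac{d}{dx}\mu_w(G,x) = \sum_{r=0}^{\lfloor n/2\rfloor} (-1)^r \sum_{M \in M_r(G)} (n-2r)\,|w(M)|^2\, x^{n-2r-1},
\]
and then read the factor $n - 2r$ as $|\{v \in V(G) : v \notin V(M)\}|$. Swapping the order of summation, each $v \in V(G)$ contributes $\sum_r (-1)^r \sum_{M \in M_r(G \setminus v)} |w(M)|^2 x^{(n-1)-2r}$, which is exactly $\mu_w(G \setminus v, x)$. None of the four parts presents any real obstacle; the main piece of bookkeeping is tracking the $x$-exponent through the vertex-count changes in parts (b) and (c), which amounts to checking that $|V(G)|-2|M|$ agrees with $|V(G\setminus u)|-2|M|$ plus one (respectively $|V(G\setminus uv)|-2|M'|$ plus two), and this is immediate.
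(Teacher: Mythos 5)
Your proposal is correct and follows essentially the same route as the paper: all four parts are derived from Lemma \ref{different_form_generalized_matching} by the same partitions of the matchings (by restriction to the two components for (a), by membership of $e_{uv}$ for (b), by saturation of $u$ for (c), and by double-counting pairs $(M,v)$ with $v$ uncovered for (d)), with the weight $\vert w(M)\vert^2$ carried through multiplicatively. The sign and exponent bookkeeping you describe matches the paper's computation exactly.
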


\begin{proof} (a) Note that every $r$-matching in $G\cup H$ consists of an $s$-matching in $G$ and an $r-s$-matching in $H$. So for each $M\in M_{r}(G\cup H)$, $M=M_1\cup M_2$ for some $M_1\in M_s(G)$ and $M_2\in M_{r-s}(H)$. Part (a) follows from Lemma \ref{different_form_generalized_matching}, by noticing that
\begin {align}
\sum_{M \in M_r(G\cup H)} \vert w(M)\vert^2 &=\sum_{s=0}^{r} \sum_{M_1\in M_s(G), \atop M_2\in M_{r-s}(H)} \vert w(M_1\cup M_2)\vert^2\notag\\ &= \sum_{s=0}^{r}\left(\sum_{M_1\in M_s(G)}\vert w(M_1)\vert^2\right)\left(\sum_{M_2\in M_{r-s}(H)} \vert w(M_2)\vert^2\right ).\notag
\end {align}

\noindent
(b) Let $P_r(e_{uv})=\{ M\in M_r(G)\ :\ e_{uv}\in M\}$. Note that if $M\in P_r(e_{uv})$, then $M\setminus \{e_{uv}\}$ is an  $(r-1)$-matching in $G\setminus uv$, i.e. $M\setminus \{e_{uv}\}\in M_{r-1}(G\setminus uv)$. Also $M_r(G)\setminus P_r(e_{uv})=M_r(G-e_{uv})$. Thus $M_r(G)=P_r(e_{uv})\cup M_r(G-e_{uv})$. Part (b) follows from Lemma \ref{different_form_generalized_matching} by noticing that

\begin {align}
\sum_{M \in M_r(G)} \vert w(M)\vert ^2 &=\sum_{M\in M_r(G-e_{uv})} \vert w(M)\vert^2+\sum_{M\in P_r(e_{uv})} \vert w(M)\vert^2\notag\\
&=\sum_{M\in M_r(G-e_{uv})} \vert w(M)\vert ^2+\vert w(e_{uv})\vert^2\sum_{M\in M_{r-1}(G\setminus uv)} \vert w(M)\vert^2.\notag
\end {align}

\noindent
(c) Note that $M_r(G)=M_{r}(G\setminus u)\cup \left(\bigcup_{v \sim u} P_r(e_{uv})\right)$, where $P_r(e_{uv})=\{ M\in M_r(G)\ :\ e_{uv}\in M\}$. So part (c) follows from Lemma \ref{different_form_generalized_matching} by noticing that

\begin {align}
\sum_{M \in M_r(G)} \vert w(M)\vert ^2 &=\sum_{M\in M_{r}(G\setminus u)} \vert w(M)\vert ^2+\sum_{v \sim u}\sum_{M\in P_r(e_{uv})} \vert w(M)\vert ^2\notag\\
&=\sum_{M\in M_{r}(G\setminus u)} \vert w(M)\vert ^2+\sum_{v \sim u}\vert w(e_{uv})\vert ^2\sum_{M\in M_{r-1}(G\setminus uv)} \vert w(M)\vert^2.\notag
\end {align}

\noindent
(d) Let $\vert V(G)\vert=n$. Then
\begin{equation}
\frac{d}{dx}(\mu_{w}(G,x)) =\sum_{r=0}^{\lfloor (n-1)/2\rfloor}(-1)^r(n-2r)\left (\sum_{M \in M_r(G)} \vert w(M)\vert ^2\right) x^{n-1-2r}.\notag
\end{equation}
Let
\begin {equation}
T_r(G)=\{ (M,v)\in M_r(G)\times V(G)\ :\ v\ \text {is not contained in any of the edges in}\ M\}.\notag
\end {equation}
Let us calculate the sum $\sum_{(M,v)\in T_r(G)} \vert w(M)\vert ^2$ in two ways. First we fix $M$ and count the number of $v$. Since $M$ contains exactly $r$ edges and each of the edges contains exactly 2 vertices, the number of vertices that are not contained in any of the edges in $M$ is equal to $n-2r$. Therefore $\sum_{(M,v)\in T_r(G)} \vert w(M)\vert ^2=(n-2r)\left (\sum_{M \in M_r(G)} \vert w(M)\vert^2\right)$.

Second we fix $v$ and count the number of $M$. This is the number of $r$-matching in $G\setminus v$. Therefore $\sum_{(M,v)\in T_r(G)} \vert w(M)\vert ^2=\sum_{v\in V(G)}\sum_{M \in M_r(G\setminus v)} \vert w(M)\vert^2$. Part (d) then follows from Lemma \ref{different_form_generalized_matching}.
\end{proof}

\begin{thm}\label{basic_recurrence} Recurrence for $\eta_{(w,w_1)}(G,x)$. ($v \sim u$ means $u$ is adjacent to $v$)
\begin{itemize}
\item [\textnormal{(a)}] $\eta_{(w,w_1)}(  G_1\cup  G_2,x)=\eta_{(w,w_1)}(  G_1,x)\eta_{(w,w_1)}(G_2,x)$ where $  G_1$ and $G_2$ are disjoint graphs.
\item [\textnormal{(b)}] $\eta_{(w,w_1)}(  G,x)=\eta_{(w,w_1)} (  G-e_{uv},x)-\vert w(e_{uv})\vert^2\eta_{(w,w_1)}(  G\setminus uv, x)$ if $e_{uv}$ is an edge of $G$.
\item [\textnormal{(c)}] $\eta_{(w,w_1)}(G,x) = (x-w_1(u)) \eta_{(w,w_1)}(G \setminus u,x) - \sum_{v \sim u} \vert w(e_{uv})\vert^2 \eta_{(w,w_1)}(G \setminus uv,x)$.
\item [\textnormal{(d)}] $\frac{d}{dx}(\eta_{(w,w_1)}(G,x)) =\sum_{v\in V(G)}\eta_{(w,w_1)}(G \setminus v,x)$.
\end{itemize}
\end{thm}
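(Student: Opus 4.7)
The plan is to derive (a)--(d) by plugging the definition
\[ \eta_{(w,w_1)}(G,x) =\sum_{S\subseteq V(G)}(-1)^{\vert V(G\setminus S)\vert}w_1(G\setminus S)\mu_{w} (H_G(S),x) \]
into the left-hand side, applying the corresponding recurrence for $\mu_w$ from Lemma \ref{pre_basic_recurrence} inside the sum, and then reindexing the outer sum over $S$. The key bookkeeping principle is that whenever a distinguished vertex $u$ (or a pair $u,v$) lies \emph{inside} $S$, both $V(G\setminus S)$ and $w_1(G\setminus S)$ are unaffected by simultaneously removing that vertex from $S$ and from $V(G)$; this is what allows the outer sums on the two sides to be matched.

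Parts (a) and (d) are essentially immediate. For (a), every $S\subseteq V(G_1\cup G_2)$ splits uniquely as $S_1\sqcup S_2$ with $S_i\subseteq V(G_i)$, and each of the three factors in the defining sum factors accordingly (the last by Lemma \ref{pre_basic_recurrence}(a)), so the whole sum factors. For (d), differentiate the defining sum term by term, apply Lemma \ref{pre_basic_recurrence}(d) to each $\mu_w(H_G(S),x)$, swap the resulting double sum, and for each fixed $v\in V(G)$ reindex $T=S\setminus\{v\}\subseteq V(G\setminus v)$ to recover $\eta_{(w,w_1)}(G\setminus v,x)$.

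Parts (b) and (c) are the substantive cases, but the strategy is the same. For (b), the terms with $S\not\supseteq\{u,v\}$ cancel between $\eta_{(w,w_1)}(G,x)$ and $\eta_{(w,w_1)}(G-e_{uv},x)$ because $H_G(S)=H_{G-e_{uv}}(S)$; for $S\supseteq\{u,v\}$ we apply Lemma \ref{pre_basic_recurrence}(b) to $\mu_w(H_G(S),x)$ and then reindex $T=S\setminus\{u,v\}\subseteq V(G\setminus uv)$, which collapses the difference to $-\vert w(e_{uv})\vert^2\,\eta_{(w,w_1)}(G\setminus uv,x)$. For (c), split on whether $u\in S$: the part with $u\notin S$ produces $-w_1(u)\,\eta_{(w,w_1)}(G\setminus u,x)$ (the factor $w_1(u)$ comes out of $w_1(G\setminus S)$, together with an extra $(-1)$ from $u\in V(G\setminus S)$), while the part with $u\in S$ is processed by Lemma \ref{pre_basic_recurrence}(c) inside $H_G(S)$, giving $x\,\eta_{(w,w_1)}(G\setminus u,x)$ from the $x\mu_w(H_G(S)\setminus u,x)$ piece and $-\sum_{v\sim u}\vert w(e_{uv})\vert^2\,\eta_{(w,w_1)}(G\setminus uv,x)$ after swapping the sums over $v$ and $S$ and reindexing $T=S\setminus\{u,v\}$. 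Combining the two parts yields the $(x-w_1(u))$ factor. The only real difficulty is the reindexing audit: one has to verify that after trimming the chosen vertices from $S$, the pair $\bigl((-1)^{|V(G\setminus S)|},w_1(G\setminus S)\bigr)$ is exactly the pair appearing in the definition of $\eta_{(w,w_1)}$ for the smaller graph $G\setminus u$ or $G\setminus uv$ with index $T$, which holds precisely because the deleted vertices sit inside $S$ and not in its complement.
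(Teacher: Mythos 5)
Your proposal is correct and follows essentially the same route as the paper: expand $\eta_{(w,w_1)}$ by its defining sum over $S\subseteq V(G)$, split according to whether the distinguished vertices lie in $S$, apply the corresponding part of Lemma \ref{pre_basic_recurrence} to $\mu_w(H_G(S),x)$, and reindex by trimming those vertices from $S$, checking that the sign and $w_1$-factors are preserved. The paper's proof of part (c) merely organizes the $u\in S$ case with some extra notation (the sets $N(T)$ indexed by subsets of the neighbourhood of $u$), but the substance is identical to yours.
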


\begin{proof} (a) For each $S\subseteq V(G_1\cup G_2)$, $S=S_1\cup S_2$ with $S_1\subseteq V(G_1)$ and $S_2\subseteq V(G_2)$. Also by part (a) of Lemma \ref{pre_basic_recurrence},  $\mu_{w} (H_{G_1\cup G_2}(S),x)=\prod_{j=1}^2\mu_{w} (H_{G_j}(S_j),x)$. Therefore
\begin{align}
(-1)^{\vert V((G_1\cup G_2)\setminus S)\vert}&w_1((G_1\cup G_2)\setminus S)\mu_{w} (H_{G_1\cup G_2}(S),x)\notag\\
&=\prod_{j=1}^2(-1)^{\vert V(G_j\setminus S_j)\vert}w_1(G_j\setminus S_j)\mu_{w} (H_{G_j}(S_j),x),\notag
\end{align}
and
\begin{align}
\eta_{(w,w_1)}(G_1\cup G_2,x) &=\sum_{S_1\subseteq V(G_1)}\sum_{S_2\subseteq V(G_2)}\prod_{j=1}^2(-1)^{\vert V(G_j\setminus S_j)\vert}w_1(G_j\setminus S_j)\mu_{w} (H_{G_j}(S_j),x)\notag\\
&=\eta_{(w,w_1)}(  G_1,x)\eta_{(w,w_1)}(G_2,x).\notag
\end{align}

\noindent
(b) Note that if $S\subseteq V(G)$ then either $\{u,v\}\subseteq S$ or $\{u,v\}\nsubseteq S$. Therefore
\begin{align}
 \eta_{(w,w_1)}(G,x) =\sum_{\substack{S\subseteq V(G),\\ \{u,v\}\subseteq S}}(-1)^{\vert V(G\setminus S)\vert}&w_1(G\setminus S)\mu_{w} (H_G(S),x)+\notag\\
 &\sum_{\substack{S\subseteq V(G),\\ \{u,v\}\nsubseteq S}}(-1)^{\vert V(G\setminus S)\vert}w_1(G\setminus S)\mu_{w} (H_G(S),x).\notag
 \end{align}
Now if $\{u,v\}\subseteq S$ then $e_{uv}\in E(H_G(S))$. So by part (b) of Lemma \ref{pre_basic_recurrence},
\begin{align}
\sum_{\substack{S\subseteq V(G),\\ \{u,v\}\subseteq S}}(-1)^{\vert V(G\setminus S)\vert}& w_1(G\setminus S)\mu_{w} (H_G(S),x)=\notag\\
&\left (\sum_{\substack{S\subseteq V(G),\\ \{u,v\}\subseteq S}}(-1)^{\vert V(G\setminus S)\vert}w_1(G\setminus S)\mu_{w} (H_G(S)-e_{uv},x)\right)-\notag\\
&\left (\vert w(e_{uv})\vert^2\sum_{\substack{S\subseteq V(G),\\ \{u,v\}\subseteq S}}(-1)^{\vert V(G\setminus S)\vert}w_1(G\setminus S)\mu_{w} (H_G(S)\setminus uv,x)\right).\notag
\end{align}

On the other hand, by setting $G'=G-e_{uv}$ we have
\begin{align}
 \eta_{(w,w_1)}(G',x) =\sum_{\substack{S\subseteq V(G'),\\ \{u,v\}\subseteq S}}(-1)^{\vert V(G'\setminus S)\vert}&w_1(G'\setminus S)\mu_{w} (H_{G'}(S),x)+\notag\\
 &\sum_{\substack{S\subseteq V(G'),\\ \{u,v\}\nsubseteq S}}(-1)^{\vert V(G'\setminus S)\vert}w_1(G'\setminus S)\mu_{w} (H_{G'}(S),x).\notag
 \end{align}
Note that $(-1)^{\vert V(G'\setminus S)\vert}w_1(G'\setminus S)=(-1)^{\vert V(G\setminus S)\vert}w_1(G\setminus S)$. Furthermore if $\{u,v\}\nsubseteq S$, then $H_{G'}(S)=H_{G}(S)$ and $\mu_{w} (H_{G'}(S),x)=\mu_{w} (H_G(S),x)$. Therefore
\begin{align}
\sum_{\substack{S\subseteq V(G'),\\ \{u,v\}\nsubseteq S}}(-1)^{\vert V(G'\setminus S)\vert}&w_1(G'\setminus S)\mu_{w} (H_{G'}(S),x)=\notag\\
&\sum_{\substack{S\subseteq V(G),\\ \{u,v\}\nsubseteq S}}(-1)^{\vert V(G\setminus S)\vert}w_1(G\setminus S)\mu_{w} (H_G(S),x).\notag
\end{align}
Also if $\{u,v\}\subseteq S$ then $H_{G'}(S)=H_G(S)-e_{uv}$. Therefore
\begin{align}
 \eta_{(w,w_1)}(G,x) = \eta_{(w,w_1)}&(G-e_{uv},x) -\notag\\
 &\left (\vert w(e_{uv})\vert^2\sum_{\substack{S\subseteq V(G),\\ \{u,v\}\subseteq S}}(-1)^{\vert V(G\setminus S)\vert}w_1(G\setminus S)\mu_{w} (H_G(S)\setminus uv,x)\right).\notag
 \end{align}
Now for each $S\subseteq V(G)$ and $\{u,v\}\subseteq S$, $S=S_1\cup \{u,v\}$ where $S_1\subseteq V(G\setminus uv)$.
Note also that $H_G(S)\setminus uv=H_{G\setminus uv} (S_1)$ and $G\setminus S=(G\setminus uv)\setminus S_1$. Hence  $\eta_{(w,w_1)}(  G,x)=\eta_{(w,w_1)} (  G-e_{uv},x)-\vert w(e_{uv})\vert^2\eta_{(w,w_1)}(  G\setminus uv, x)$.

\noindent
(c) Let $v_1,\dots, v_k$ be all the vertices adjacent to $u$ in $G$ and $g(S)=(-1)^{\vert V(G\setminus S)\vert}w_1(G\setminus S)\mu_{w} (H_G(S),x)$. For a set $T \subseteq \{v_{1}, \ldots, v_{k}\}$, let $N(T) = \{S \subseteq V(G) : u \in S~\textnormal{and} ~\textnormal{if}~ v \sim u~\textnormal{in}~H_{G}(S)~\textnormal{then}~v
\in T \}$. Then
\begin{equation}
 \eta_{(w,w_1)}(G,x) =\sum_{\substack{S\subseteq V(G),\\ u\notin S}}g(S)+\sum_{T\subseteq \{v_1,\dots,v_k\}}\sum_{S \in N(T)}g(S).\notag
 \end{equation}
For each $S\subseteq V(G)$ and $u\notin S$, we have $S\subseteq V(G\setminus u)$, and vice versa. Therefore $(-1)^{\vert V(G\setminus S)\vert}=-(-1)^{\vert V((G\setminus u)\setminus S)\vert}$, $w_1(G\setminus S)=w_1(u)w_1((G\setminus u)\setminus S)$ and $H_G(S)=H_{G\setminus u}(S)$. So
\begin{equation}
\sum_{\substack{S\subseteq V(G),\\ u\notin S}}g(S)=-w_1(u)\eta_{(w,w_1)}(G \setminus u,x).\notag
\end{equation}
Let $T\subseteq \{v_1,\dots,v_k\}$ (note that $T$ can be empty set). By part (c) of Lemma \ref{pre_basic_recurrence}, for each $S\subseteq V(G)$ such that $S \in N(T)$, we have
\begin{align}
g(S)=x(-1)^{\vert V(G\setminus S)\vert}&w_1(G\setminus S)\mu_{w} (H_G(S)\setminus u,x)-\notag\\
&\sum_{v\in T}\vert w(e_{uv})\vert^2(-1)^{\vert V(G\setminus S)\vert}w_1(G\setminus S)\mu_{w} (H_G(S)\setminus uv,x).\notag
\end{align}
Furthermore $S=\{u\}\cup S_1$ for some $S_1\subseteq G\setminus u$ and $T\subseteq S_1$. Also $(-1)^{\vert V(G\setminus S)\vert}=(-1)^{\vert V((G\setminus u)\setminus S_1)\vert}$, $w_1(G\setminus S)=w_1((G\setminus u)\setminus S_1)$ and $H_G(S)\setminus u=H_{G\setminus u}(S_1)$. When $S_{1}$ runs through all the subsets of $V(G \setminus u)$, $T$ runs through all the subsets of $\{v_1,\dots, v_k\}$. Therefore
\begin{align}
&x\left (\sum_{T\subseteq \{v_1,\dots,v_k\}}\sum_{ S \in N(T)}(-1)^{\vert V(G\setminus S)\vert}w_1(G\setminus S)\mu_{w} (H_G(S)\setminus u,x)\right)\notag\\
&=x\left (\sum_{S_1\subseteq V(G\setminus u)}(-1)^{\vert V((G\setminus u)\setminus S_1)\vert}w_1((G\setminus u)\setminus S_1)\mu_{w} (H_{(G\setminus u)}(S_1),x)\right)\notag\\
&=x\left (\eta_{(w,w_1)}(G \setminus u,x)\right).\notag
\end{align}
Also
\begin{align}
&\left (\sum_{T\subseteq \{v_1,\dots,v_k\}}\sum_{ S \in N(T)}\sum_{v\in T}\vert w(e_{uv})\vert^2(-1)^{\vert V(G\setminus S)\vert}w_1(G\setminus S)\mu_{w} (H_G(S)\setminus uv,x)\right)\notag\\
&=\sum_{v \sim u} \vert w(e_{uv})\vert^2 \left(\sum_{S_2\subseteq V(G\setminus uv)}(-1)^{\vert V((G\setminus uv)\setminus S_2)\vert}w_1((G\setminus uv)\setminus S_2)\mu_{w} (H_{(G\setminus uv)}(S_2),x)\right)\notag\\
&=\sum_{v \sim u} \vert w(e_{uv})\vert^2 \eta_{(w,w_1)}(G \setminus uv,x),\notag
\end{align}
where the first equality holds by comparing each term on the left and right sides of the equations: if $T=\varnothing$ then $\sum_{v\in T}\vert w(e_{uv})\vert^2(-1)^{\vert V(G\setminus S)\vert}w_1(G\setminus S)\mu_{w} (H_G(S)\setminus uv,x)=0$. So we may assume $T\neq \varnothing$. For a fixed $v\in T$, the term $\vert w(e_{uv})\vert^2(-1)^{\vert V(G\setminus S)\vert}w_1(G\setminus S)\mu_{w} (H_G(S)\setminus uv,x)$ is on the left side of the equation. Note that $S=S_2\cup \{u,v\}$ with $S_2\subseteq V(G\setminus uv)$. Also $(-1)^{\vert V(G\setminus S)\vert}=(-1)^{\vert V((G\setminus uv)\setminus S_2)\vert}$, $w_1(G\setminus S)=w_1((G\setminus uv)\setminus S_2)$ and $H_G(S)\setminus uv=H_{G\setminus uv}(S_2)$. Therefore
\begin{align}
\vert w(e_{uv})\vert^2(-1)^{\vert V(G\setminus S)\vert}&w_1(G\setminus S)\mu_{w} (H_G(S)\setminus uv,x)=\notag\\
&\vert w(e_{uv})\vert^2(-1)^{\vert V((G\setminus uv)\setminus S_2)\vert}w_1((G\setminus uv)\setminus S_2)\mu_{w} (H_{(G\setminus uv)}(S_2),x),\notag
\end{align}
which is a term on the right side of the equation. It is not hard to see that the terms on the left side is in one-to-one correspondence with the terms on the right.

Hence we have $\eta_{(w,w_1)}(G,x) = (x-w_1(u)) \eta_{(w,w_1)}(G \setminus u,x) - \sum_{v \sim u} \vert w(e_{uv})\vert^2 \eta_{(w,w_1)}(G \setminus uv,x)$.

\noindent
(d) Note that
\begin{align}
\frac{d}{dx}(\eta_{(w,w_1)}(G,x))&=\sum_{S\subseteq V(G)}(-1)^{\vert V(G\setminus S)\vert}w_1(G\setminus S)\frac{d}{dx}\mu_{w} (H_G(S),x)\notag\\
&=\sum_{S\subseteq V(G)}(-1)^{\vert V(G\setminus S)\vert}w_1(G\setminus S)\sum_{v\in S}\mu_{w} (H_G(S)\setminus v,x),\notag
\end{align}
where the second equality follows from part (d) of Lemma \ref{pre_basic_recurrence}. Note that
\begin{align}
\sum_{S\subseteq V(G)}(-1)^{\vert V(G\setminus S)\vert}&w_1(G\setminus S)\sum_{v\in S}\mu_{w} (H_G(S)\setminus v,x)\notag\\
&=\sum_{v\in V(G)}\left ( \sum_{S_1\subseteq V(G\setminus v)}(-1)^{\vert V((G\setminus v)\setminus S_1)\vert}w_1((G\setminus v)\setminus S_1)\mu_{w} (H_{G\setminus v}(S_1),x)\right)\notag\\
&=\sum_{v\in V(G)}\eta_{(w,w_1)}(G \setminus v,x),\notag
\end{align}
where the first equality holds by comparing each term on the left and right sides of the equations: for a fixed $S$ and $v\in S$, the term $(-1)^{\vert V(G\setminus S)\vert}w_1(G\setminus S)\mu_{w} (H_G(S)\setminus v,x)$ is on the left side of the equation. Note that $S=S_1\cup \{v\}$ with $S_1\subseteq V(G\setminus v)$. Also $(-1)^{\vert V(G\setminus S)\vert}=(-1)^{\vert V((G\setminus v)\setminus S_1)\vert}$, $w_1(G\setminus S)=w_1((G\setminus v)\setminus S_1)$ and $H_G(S)\setminus v=H_{G\setminus v}(S_1)$.  Therefore
\begin{align}
(-1)^{\vert V(G\setminus S)\vert}&w_1(G\setminus S)\mu_{w} (H_G(S)\setminus v,x)=\notag\\
&(-1)^{\vert V((G\setminus v)\setminus S_1)\vert}w_1((G\setminus v)\setminus S_1)\mu_{w} (H_{(G\setminus v)}(S_1),x),\notag
\end{align}
which is a term on the right side of the equation. It is not hard to see that the terms on the left side is in one-to-one correspondence with the terms on the right. Hence the proof is completed.
\end{proof}

\begin{dfn}\label{elementary_graph} An \emph {elementary} graph is a disjoint union of single edges ($K_2$) or cycles ($C_r$).

\noindent
A \emph {spanning elementary subgraph} of a graph is an elementary subgraph that contains all the vertices of the graph.

\noindent
We denote $\text {comp} (G)$ as the number of components in $G$.

\noindent
For convenience, we shall write $w(H)=\prod_{e\in E(H)} w(e)$ for any subgraph $H$ of $G$.
\end{dfn}

\noindent
Let $V(G)=\{1,2,\dots, n\}$. Let $v_1v_2\dots v_mv_1$, $m\geq 3$ be a cycle $C$ in $G$. We set
\begin{equation}
w_2(C)=b_{v_1v_2}b_{v_2v_3}\dots b_{v_{m-1}v_m}b_{v_mv_1}+b_{v_1v_m}b_{v_mv_{m-1}}\dots b_{v_{3}v_2}b_{v_2v_1},\notag
\end{equation}
where $b_{uv}$ is the $uv$-entry in the weighted adjacency matrix $B_{(w,w_1)}(G)$. Note that  $w_2(C)=b+\overline b$ where $b=b_{v_1v_2}b_{v_2v_3}\dots b_{v_{m-1}v_m}b_{v_mv_1}$. So $w_2(C)$ is a real number. The following lemma is obvious.

\begin{lm}\label{real_part_weight_function}  If the edge weight function $w$ is positive real-valued, then $w_2(C)>0$ for any cycle $C$ in $G$.\qed
\end{lm}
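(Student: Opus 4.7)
The plan is to unpack the definition of $b_{uv}$ and observe that positivity of $w$ forces every factor in $w_2(C)$ to be positive. Concretely, for any edge $e_{uv}\in E(G)$, the weighted adjacency entry $b_{uv}$ equals either $w(e_{uv})$ or $\overline{w(e_{uv})}$ depending on whether $u<v$ or $u>v$; since $w$ is positive real-valued, $w(e_{uv})\in\mathbb{R}_{>0}$ and hence $\overline{w(e_{uv})}=w(e_{uv})>0$. In either case, $b_{uv}>0$.

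Now let $C$ be the cycle $v_1v_2\cdots v_mv_1$ with $m\ge 3$. Each of the $m$ edges $e_{v_1v_2},e_{v_2v_3},\ldots,e_{v_{m-1}v_m},e_{v_mv_1}$ belongs to $E(G)$, so by the previous paragraph every one of the factors $b_{v_1v_2},b_{v_2v_3},\ldots,b_{v_{m-1}v_m},b_{v_mv_1}$ is a positive real number. Their product is therefore a positive real number, and similarly the reversed product $b_{v_1v_m}b_{v_mv_{m-1}}\cdots b_{v_3v_2}b_{v_2v_1}$ (which in fact equals the first product under the positivity assumption, since each $b_{v_iv_{i+1}}=b_{v_{i+1}v_i}$) is positive. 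Hence $w_2(C)$ is a sum of two positive real numbers, so $w_2(C)>0$.

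There is no real obstacle here; the only thing to be careful about is the case distinction in the definition of $b_{uv}$, but this collapses trivially because conjugation fixes positive real numbers. The lemma is essentially a one-line observation, and the proof should occupy no more than a short paragraph in the paper.
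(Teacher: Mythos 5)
Your proof is correct and is exactly the argument the paper intends: the paper states the lemma as obvious (with no written proof), and your observation that every entry $b_{uv}$ on an edge of the cycle equals $w(e_{uv})=\overline{w(e_{uv})}>0$, so that $w_2(C)$ is a sum of two positive products, is precisely the one-line justification. Nothing is missing.
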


Now let us extend $w_2$ to the union of disjoint cycles. Let $C_1, C_2,\dots ,C_k$ be disjoint cycles in $G$ and $C=C_1\cup C_2\cup\cdots\cup C_k$. We set $w_2(C)=\prod_{j=1}^k w_2(C_j)$. We are ready to prove the next lemma whose non-weighted version was first observed by Harary \cite[Proposition 7.2]{B}.

\begin{lm}\label{determinant} Suppose $w_1(u)=0$ for all $u\in V(G)$. Let $\Gamma$ be the set of all spanning elementary subgraphs of $G$ and $\vert V(G)\vert=n$. Then
\begin{equation}
\det B_{(w,w_1)}(G)=(-1)^n\sum_{\gamma\in \Gamma} (-1)^{\textnormal {com} (\gamma)}\vert w(\gamma\setminus C_{\gamma})\vert ^2w_2(C_{\gamma}),\notag
\end{equation}
where $C_{\gamma}$ is the union of all the cycles in $\gamma$. In particular $\det B_{(w,w_1)}(G)$ is a real number.
\end{lm}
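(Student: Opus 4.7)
The natural approach is the Leibniz expansion
\[
\det B_{(w,w_1)}(G) = \sum_{\sigma\in S_n}\mathrm{sgn}(\sigma)\prod_{i=1}^n b_{i,\sigma(i)},
\]
combined with the standard bijection between derangements with nonvanishing contribution and spanning elementary subgraphs. Since $w_1(u)=0$ for all $u$, the diagonal entries $b_{ii}$ vanish, so only fixed-point-free $\sigma$ contribute; and since $b_{i,\sigma(i)}\neq 0$ requires $e_{i\sigma(i)}\in E(G)$, each surviving $\sigma$ encodes a spanning subgraph of $G$ whose components are either single edges (coming from 2-cycles of $\sigma$) or cycles of length $\geq 3$ (coming from longer cycles of $\sigma$), i.e.\ an element $\gamma\in\Gamma$.

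The plan is to group the surviving permutations by their underlying $\gamma$. For each $\gamma$ with edge-components $e_1,\dots,e_k$ and cycle-components $C_1,\dots,C_m$ of respective lengths $r_1,\dots,r_m$, a 2-cycle of $\sigma$ is forced on each $e_j$, while each $C_t$ admits exactly the two traversals, giving $2^m$ permutations in total. For each such $\sigma$, I would then compute two things:
\begin{itemize}
\item[(i)] \emph{The product of entries.} Each edge $e_{uv}=e_j$ contributes $b_{uv}b_{vu}=w(e_{uv})\overline{w(e_{uv})}=|w(e_{uv})|^2$, giving $|w(\gamma\setminus C_\gamma)|^2$ overall. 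Summing the contributions of the two orientations of each cycle $C_t$ and using $\prod_t(a_t+\overline{a_t})=\sum_{\text{orientations}}\prod_t(\cdot)$, the cycle-part of the sum over the $2^m$ permutations collapses exactly to $\prod_t w_2(C_t)=w_2(C_\gamma)$.
\item[(ii)] \emph{The sign.} Using $\mathrm{sgn}(\sigma)=(-1)^{n-c(\sigma)}$ where $c(\sigma)$ is the number of disjoint cycles of $\sigma$, and noting $c(\sigma)=k+m=\mathrm{com}(\gamma)$ (one $\sigma$-cycle per edge-component and per graph-cycle-component, regardless of orientation), the sign is $(-1)^{n-\mathrm{com}(\gamma)}=(-1)^n(-1)^{\mathrm{com}(\gamma)}$ and is the same for all $2^m$ permutations attached to $\gamma$.
\end{itemize}

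Collecting (i) and (ii) yields
\[
\det B_{(w,w_1)}(G)=(-1)^n\sum_{\gamma\in\Gamma}(-1)^{\mathrm{com}(\gamma)}|w(\gamma\setminus C_\gamma)|^2\,w_2(C_\gamma),
\]
as required. The reality of $\det B_{(w,w_1)}(G)$ is then immediate: each $|w(e)|^2$ is a positive real, and $w_2(C)=b+\overline{b}\in\mathbb{R}$ by the remark preceding Lemma~\ref{real_part_weight_function}, so every summand is real (this is also consistent with $B_{(w,w_1)}(G)$ being Hermitian).

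The main subtlety I expect is the careful bookkeeping in step (i)–(ii): namely, verifying that summing over the two orientations of each cycle-component $C_t$ really produces the symmetric quantity $w_2(C_t)$ rather than, say, $2\,\mathrm{Re}(b_{v_1v_2}\cdots b_{v_mv_1})$ with a spurious factor, and that the sign computation $c(\sigma)=\mathrm{com}(\gamma)$ holds uniformly in the orientations chosen. Once this correspondence between $\sigma$ and $\gamma$ is set up cleanly, the remainder is a routine repackaging of the Leibniz formula and mirrors the classical Harary-type determinantal identity cited in the excerpt.
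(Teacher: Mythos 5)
Your proposal is correct and follows essentially the same route as the paper's proof: both expand the determinant via the Leibniz formula, observe that the vanishing diagonal restricts to fixed-point-free permutations whose nonzero terms correspond to spanning elementary subgraphs, group the $2^{m}$ permutations over each $\gamma$ so the two cycle orientations sum to $w_2(C_\gamma)$, and verify the sign is $(-1)^{n+\textnormal{comp}(\gamma)}$ uniformly over the fibre. The only cosmetic difference is that you compute the sign via $\mathrm{sgn}(\sigma)=(-1)^{n-c(\sigma)}$ while the paper counts parity of cycle lengths, but these are the same standard fact.
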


\begin{proof} Let $B_{(w,w_1)}(G)=[b_{uv}]$. Recall that $\det B_{(w,w_1)}(G)=\sum_{\pi\in S_n} \textnormal {sign} (\pi) \prod_{u=1}^n b_{u\pi(u)}$ (see \cite[Definition 1.2.2 on p. 6]{Mir}) where $S_n$ is the set of all permutations on $V(G)=\{1,2,\dots, n\}$.  Note that $b_{uu}=w_1(u)=0$. So the term $\prod_{u=1}^n b_{u\pi(u)}$ vanishes if $b_{u\pi(u)}=0$ for some $u$, that is either $\pi(u)=u$, or $\pi(u)\neq u$ and $e_{u\pi(u)}$ is not an edge in $G$. Therefore each non-vanishing term corresponds to a disjoint union of edges and cycles, which is a spanning subgraph of $G$. Furthermore the $\pi$ that corresponds to the the non-vanishing term can be written as a product of disjoint cycles of length at least 2 which is actually in correspondence to a spanning elementary subgraph of $G$ (the fact that every $\pi\in S_n$ can be written as a product of disjoint cycles can be found in \cite[Exercise 1.2.5 on p. 3]{Dix}).

Let $S\subseteq S_n$ be the set of all $\pi$ for which $\prod_{u=1}^n b_{u\pi(u)}\neq 0$. Let $f:S\rightarrow \Gamma$ be defined by $f(\pi)=\gamma$ where $\gamma$ is the spanning elementary subgraph corresponds to $\pi$. Let $\gamma\in\Gamma$. First let us find $\prod_{u=1}^n b_{u\pi(u)}$ for each $\pi\in f^{-1}(\gamma)$. Let $\pi\in f^{-1}(\gamma)$.  Let $u_1u_2$ be an edge ($K_2$) in $\gamma$. Then in the decomposition of $\pi$, it must have the cycle $(u_1\ u_2)$. Let $v_1v_2v_3\dots v_{m-1}v_mv_1$, $m\geq 3$ be a cycle in $\gamma$. Then in the decomposition of $\pi$, it must have either the cycle $(v_1\ v_2\ v_3 \dots v_m)$ or $(v_1\ v_m\ v_{m-1}\dots v_2)$. Note that $(v_1\ v_2\ v_3 \dots v_m)^{-1}=(v_1\ v_m\ v_{m-1}\dots v_2)$.

Let $\pi_{\gamma}\in f^{-1}(\gamma)$ be fixed. Then $\pi_{\gamma}=\tau_1'\tau_2'\dots \tau_{k_1}'\tau_1\tau_2\dots\tau_{k_2}$ where $\tau_j'$ is a 2-cycle and $\tau_j$ is a $m_j$-cycle, $m_j\geq 3$. For each $\pi\in f^{-1}(\gamma)$, $\pi=\tau_1'\tau_2'\dots \tau_{k_1}'\tau_1^{\pm 1}\tau_2^{\pm 1}\dots\tau_{k_2}^{\pm 1}$. Therefore $\textnormal {sign} (\pi)=\textnormal {sign} (\pi_{\gamma})$ and $\vert f^{-1}(\gamma)\vert =2^{k_2}$.

Suppose $\tau_1'=(u_1\ u_2)$ (we may assume $u_1<u_2$). Then $b_{u_1u_2}b_{u_2u_1}=w(e_{u_1u_2})\overline{w(e_{u_1u_2})}=\vert w(e_{u_1u_2})\vert^2$ is a term in $\prod_{u=1}^n b_{u\pi(u)}$.  Note that $\gamma\setminus C_{\gamma}$ consists of the union of $k_1$ edges ($K_2$) and each of these edges correspond to a $\tau_j'$. Therefore for each $\pi\in f^{-1}(\gamma)$, $\vert w(\gamma\setminus C_{\gamma})\vert^2$ is a term in $\prod_{u=1}^n b_{u\pi(u)}$.

Suppose $\tau_1=(v_1\ v_2\ v_3 \dots v_m)$. Then $b_{v_1v_2}b_{v_2v_3}\dots b_{v_{m-1}v_m}b_{v_mv_1}$ is a term in $\prod_{u=1}^n b_{u\pi(u)}$.  Note that $C_{\gamma}$ consists of the union of $k_2$ cycles and each of these cycles correspond to a $\tau_j$. Therefore if we sum up all the $\pi\in f^{-1}(\gamma)$, we have
\begin{equation}
\sum_{\pi\in f^{-1}(\gamma)} \textnormal {sign} (\pi)\prod_{u=1}^n b_{u\pi(u)}=\textnormal {sign} (\pi_{\gamma}) \vert w(\gamma\setminus C_{\gamma})\vert ^2w_2(C_{\gamma}).\notag
\end{equation}

Now let us find $\textnormal {sign} (\pi_{\gamma})$.  A cycle in $\gamma$ is called an even cycle if it contains odd number of vertices and an odd cycle otherwise. A $K_2$ in $\gamma$ is also called an odd cycle. Let the number of even cycles and the number of odd cycles in $\gamma$ be $N_e$ and $N_o$ respectively. Then $n\equiv N_e\mod 2$. Now $\textnormal {sign} (\pi_{\gamma})=(-1)^{N_o}$. Since $\textnormal {comp} (\gamma)=N_o+N_e$, we conclude that $\textnormal {sign} (\pi_{\gamma})=(-1)^{\textnormal {comp} (\gamma)+n}$. Therefore
\begin{equation}
\sum_{\pi\in f^{-1}(\gamma)} \textnormal {sign} (\pi)\prod_{u=1}^n b_{u\pi(u)}=(-1)^{\textnormal {comp} (\gamma)+n} \vert w(\gamma\setminus C_{\gamma})\vert ^2w_2(C_{\gamma}).\notag
\end{equation}

Hence
\begin{align}
\det B_{(w,w_1)}(G)& =\sum_{\pi\in S} \textnormal {sign} (\pi) \prod_{u=1}^n b_{u\pi(u)}\notag\\
& =\sum_{\gamma\in\Gamma} \sum_{\pi\in f^{-1}(\gamma)}  \textnormal {sign} (\pi) \prod_{u=1}^n b_{u\pi(u)}\notag\\
& =\sum_{\gamma\in\Gamma} (-1)^{\textnormal {comp} (\gamma)+n} \vert w(\gamma\setminus C_{\gamma})\vert ^2w_2(C_{\gamma}).\notag
\end{align}
\end{proof}

We shall need the following theorem from matrix theory.

\begin{thm}\label{determinant_characteristic_matrix} \textnormal{(\cite[Theorem 7.1.2 on p. 197]{Mir})} Let $B$ be a $n\times n$ matrix. Then
\begin {equation}
\det (xI_n-B)=x^n+\sum_{k=0}^{n-1} (-1)^{n-k}\sum_{1\leq u_1<\cdots<u_k\leq n}\vert B(u_1,\dots ,u_k;u_1,\dots ,u_k)\vert x^{k},\notag
\end {equation}
where  $B(u_1,\dots, u_k;u_1,\dots,u_k)$ is the matrix obtained from $B$ by deleting the $u_1,\dots,u_k$ rows and $u_1,\dots,u_k$ columns. Note that  $B(u_1,\dots, u_k;u_1,\dots,u_k)$ is a $(n-k)\times (n-k)$ matrix.\qed
\end{thm}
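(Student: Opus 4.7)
The plan is to expand $\det(xI_n - B)$ using multilinearity of the determinant in its columns. I will write the $j$-th column of $xI_n - B$ as $x\,e_j - B_j$, where $e_j$ is the $j$-th standard basis vector and $B_j$ is the $j$-th column of $B$, and then distribute to obtain
\begin{equation*}
\det(xI_n - B) \;=\; \sum_{S \subseteq \{1,\ldots,n\}} x^{|S|}(-1)^{n-|S|}\det(M_S),
\end{equation*}
where $S$ indexes the columns in which the term $x\,e_j$ (rather than $-B_j$) is selected, and $M_S$ is the matrix whose $j$-th column equals $e_j$ for $j \in S$ and $B_j$ for $j \notin S$. The factor $(-1)^{n-|S|}$ accounts for the minus signs pulled out of the $n-|S|$ columns where $-B_j$ was chosen.

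The second step is to evaluate $\det(M_S)$. For each $j \in S$, the $j$-th column of $M_S$ is a standard basis vector whose lone $1$ sits in row $j$. Conjugating $M_S$ by a permutation matrix $P$ that reorders the indices so that those in $S$ come first (a simultaneous row-and-column permutation) yields a block upper-triangular matrix of the form $\bigl(\begin{smallmatrix} I_{|S|} & * \\ 0 & B[S^c,S^c] \end{smallmatrix}\bigr)$, where $B[S^c,S^c]$ denotes the principal submatrix of $B$ with rows and columns indexed by $S^c$. Since $\det(PM_SP^{-1}) = \det(M_S)$, we conclude $\det(M_S) = \det(B[S^c,S^c])$, with no extra sign.

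Finally, I would set $k = |S|$ and let $u_1 < \cdots < u_k$ enumerate $S$; then $B[S^c,S^c]$ is exactly $B(u_1,\ldots,u_k;u_1,\ldots,u_k)$ in the paper's notation, and $|S^c| = n-k$. The $k = n$ term, corresponding to $S = \{1,\ldots,n\}$ with empty principal minor equal to $1$, contributes the leading $x^n$, which I split off to match the form of the stated theorem. The only mildly delicate point is the sign in the middle step: verifying that the simultaneous row-and-column permutation introduces no sign change. This holds because conjugation by a permutation matrix multiplies the determinant by $(\det P)(\det P^{-1}) = 1$, so the identity block in the upper-left can be extracted cleanly. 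The remainder is routine bookkeeping over subsets and signs.
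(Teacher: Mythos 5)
Your proof is correct. Note that the paper does not prove this statement at all---it is quoted verbatim from Mirsky's book as a known fact from matrix theory---so there is no internal argument to compare against. Your multilinearity expansion of $\det(xI_n-B)$ over the $2^n$ column choices, followed by the reduction $\det(M_S)=\det(B[S^c,S^c])$ via simultaneous row-and-column permutation (or, just as easily, by cofactor expansion along each column $e_j$ with $j\in S$, which carries sign $(-1)^{j+j}=+1$), is the standard proof, and your sign bookkeeping, including the extraction of the $k=n$ term as the leading $x^n$, is accurate.
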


\begin{lm}\label{determinant_characteristic_spanning} Suppose $w_1(u)=0$ for all $u\in V(G)$. Let $\Gamma_i$ be the set of all elementary subgraphs of $G$ with $n-i$ vertices and $\phi_{(w,w_1)} (G,x)=\sum_{r=0}^n c_rx^r$, where $n=\vert V(G)\vert$. Then $c_n=1$ and for $i=0,\dots, n-1$,
\begin {equation}
c_{i}=\sum_{\gamma\in\Gamma_i}(-1)^{\textnormal {comp} (\gamma)}\vert w(\gamma\setminus C_{\gamma})\vert^2w_2(C_{\gamma}),\notag
\end {equation}
where $C_{\gamma}$ is the union of all the cycles in $\gamma$. In particular $c_{n-1}=0$.
\end{lm}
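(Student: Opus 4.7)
The plan is to apply Theorem~\ref{determinant_characteristic_matrix} to the weighted adjacency matrix $B = B_{(w,w_1)}(G)$ and then evaluate each principal minor that appears via Lemma~\ref{determinant}. Directly from the theorem, the leading coefficient is $c_n = 1$, and for each $i \in \{0, 1, \ldots, n-1\}$ one has
\[ c_i = (-1)^{n-i} \sum_{\substack{U \subseteq V(G) \\ \vert U\vert = i}} \det B(U; U), \]
where $B(U; U)$ denotes the principal submatrix obtained by deleting the rows and columns indexed by $U$.

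The next step is to identify each such submatrix combinatorially. Because every diagonal entry of $B$ equals $w_1(u) = 0$, the submatrix $B(U; U)$ is precisely the weighted adjacency matrix of the induced subgraph $H_G(V(G) \setminus U)$ on $n - i$ vertices, whose vertex weight function is again identically zero. Hence Lemma~\ref{determinant} applies and gives
\[ \det B(U; U) = (-1)^{n-i} \sum_{\gamma \in \Gamma_U} (-1)^{\textnormal{comp}(\gamma)} \vert w(\gamma \setminus C_\gamma)\vert^{2} w_2(C_\gamma), \]
where $\Gamma_U$ is the set of spanning elementary subgraphs of $H_G(V(G) \setminus U)$.

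The key combinatorial observation is that a spanning elementary subgraph of $H_G(V(G) \setminus U)$ is exactly an elementary subgraph of $G$ whose vertex set equals $V(G) \setminus U$. So as $U$ ranges over all $i$-subsets of $V(G)$, the families $\Gamma_U$ partition $\Gamma_i$, each $\gamma \in \Gamma_i$ corresponding to the unique $U = V(G) \setminus V(\gamma)$. Substituting back and using $(-1)^{n-i} \cdot (-1)^{n-i} = 1$ produces the claimed formula for $c_i$.

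Finally, $c_{n-1} = 0$ because $\Gamma_{n-1}$ consists of elementary subgraphs on a single vertex, and every $K_2$ and every cycle $C_r$ has at least two vertices; hence $\Gamma_{n-1} = \varnothing$ and the sum is empty. No step is a genuine obstacle, since Theorem~\ref{determinant_characteristic_matrix} and Lemma~\ref{determinant} already do the heavy lifting; the only bookkeeping worth attention is verifying that the two sign factors $(-1)^{n-i}$ cancel and that each elementary subgraph of $G$ with $n-i$ vertices is counted exactly once across the sum over $i$-subsets $U$.
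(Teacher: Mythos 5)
Your proof is correct and takes essentially the same route as the paper's: both expand $\det(xI-B)$ via Theorem~\ref{determinant_characteristic_matrix} into sums of principal minors, identify each principal submatrix as the weighted adjacency matrix of the corresponding induced subgraph with zero vertex weights, evaluate it by Lemma~\ref{determinant}, and note that the spanning elementary subgraphs taken over all $(n-i)$-vertex induced subgraphs are exactly $\Gamma_i$ while the two factors of $(-1)^{n-i}$ cancel. Your justification of $c_{n-1}=0$ (that $\Gamma_{n-1}=\varnothing$) also matches the paper's.
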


\begin{proof} Let $B=B_{(w,w_1)}(G)$ and $V(G)=\{1,2,\dots, n\}$. By Theorem \ref{determinant_characteristic_matrix},
\begin {equation}
\phi_{(w, w_{1})} (G,x)=\det (xI_n-B)=x^n+\sum_{k=0}^{n-1} (-1)^{n-k}\sum_{1\leq u_1<\cdots<u_k\leq n}\vert B(u_1,\dots ,u_k;u_1,\dots ,u_k)\vert x^{k}.\notag
\end {equation}
If $H(u_1,\dots ,u_k)=G\setminus u_1\dots u_k$ then $B_{(w,w_1)}(H(u_1,\dots ,u_k))=B(u_1,\dots ,u_k;u_1,\dots ,u_k)$.
By Lemma \ref{determinant},
\begin{equation}
\sum_{1\leq u_1<\cdots<u_k\leq n}\vert B(u_1,\dots ,u_k;u_1,\dots ,u_k)\vert=(-1)^{n-k}\sum_{\gamma\in \Gamma_k} (-1)^{\textnormal {comp} (\gamma)}\vert w(\gamma\setminus C_{\gamma})\vert^2w_2(C_{\gamma}).\notag
\end{equation}
Therefore
\begin{equation}
\phi_{(w,w_1)}(G,x)=x^n+\sum_{k=0}^{n-1} \left(\sum_{\gamma\in \Gamma_k} (-1)^{\textnormal {comp} (\gamma)}\vert w(\gamma\setminus C_{\gamma})\vert^2w_2(C_{\gamma}) \right)x^{k}.\notag
\end{equation}
Hence the lemma holds. Finally $c_{n-1}=0$ because $\Gamma_{n-1}$ is the empty set.
\end{proof}

\begin{lm}\label{pre_main_1} Suppose $w_1(u)=0$ for all $u\in V(G)$. Let $\Gamma(c)$ be the set of all elementary subgraphs of $G$ which contains only cycles. Then
\begin {equation}
\phi_{(w,w_1)}(G,x)=\mu_w (G,x)+\sum_{C\in \Gamma (c)}(-1)^{\textnormal {comp}(C)}w_2(C)\mu_w (G\setminus C,x).\notag
\end {equation}
In particular $\phi_{(w,w_1)}(G,x)$ is a polynomial over the field of real number $\mathbb R$.
\end{lm}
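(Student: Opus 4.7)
My plan is to start from Lemma \ref{determinant_characteristic_spanning} and reorganize its double sum over elementary subgraphs by grouping each elementary subgraph $\gamma$ according to its cycle part $C_\gamma$. Substituting the formula for $c_i$ into $\phi_{(w,w_1)}(G,x)=\sum_{i=0}^n c_ix^i$, and treating the empty graph as the (degenerate) elementary subgraph responsible for the leading $x^n$ term, gives the single-sum expression
\[ \phi_{(w,w_1)}(G,x) = \sum_{\gamma}(-1)^{\textnormal{comp}(\gamma)}\,|w(\gamma\setminus C_\gamma)|^2\,w_2(C_\gamma)\,x^{n-|V(\gamma)|}, \]
where the sum ranges over all elementary subgraphs of $G$ (using the conventions $w_2(\varnothing)=1$, $|w(\varnothing)|^2=1$, $\textnormal{comp}(\varnothing)=0$).

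Next I would set up the obvious bijection between elementary subgraphs of $G$ and pairs $(C,M)$ in which $C$ is a (possibly empty) disjoint union of cycles in $G$ and $M\in\mathcal{M}(G\setminus C)$ is a (possibly empty) matching of the complement: given $\gamma$, take $C=C_\gamma$ and $M=\gamma\setminus C_\gamma$; conversely $(C,M)\mapsto M\cup C$. Under this bijection $\textnormal{comp}(\gamma)=\textnormal{comp}(C)+|M|$, $|V(\gamma)|=|V(C)|+2|M|$, and the weights factor. Reindexing the sum yields
\begin{align}
\phi_{(w,w_1)}(G,x) &= \sum_{C}(-1)^{\textnormal{comp}(C)}\,w_2(C)\sum_{M\in\mathcal{M}(G\setminus C)}(-1)^{|M|}\,|w(M)|^2\,x^{(n-|V(C)|)-2|M|}\notag\\
&= \sum_{C}(-1)^{\textnormal{comp}(C)}\,w_2(C)\,\mu_w(G\setminus C,x),\notag
\end{align}
where the inner sum is recognized as $\mu_w(G\setminus C,x)$ directly from its definition. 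Isolating the $C=\varnothing$ contribution, which produces exactly $\mu_w(G,x)$, and letting the remaining $C$ range over $\Gamma(c)$, gives the claimed identity.

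The ``in particular'' assertion then follows immediately: every $\mu_w(G\setminus C,x)$ has real coefficients since its coefficients are nonnegative real combinations of $|w(M)|^2$, and every $w_2(C)$ is real because, as noted just after its definition, $w_2(C_j)=b+\overline{b}$ for each constituent cycle $C_j$ and $w_2$ is multiplicative on disjoint unions of cycles. The only point requiring real care is step 2 — verifying that the assignment $\gamma\leftrightarrow(C,M)$ is a true bijection onto pairs with $V(M)\cap V(C)=\varnothing$ (including the boundary cases where $C$ or $M$ is empty) so that no elementary subgraph is counted twice or omitted. Once that bookkeeping is confirmed, the identity follows by a routine exchange of summations.
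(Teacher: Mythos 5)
Your proposal is correct and follows essentially the same route as the paper: both start from Lemma \ref{determinant_characteristic_spanning} and regroup the sum over elementary subgraphs according to the cycle part $C_\gamma$, with the cycle-free subgraphs (your $C=\varnothing$ terms, the paper's $\Gamma_i(1)$) assembling into $\mu_w(G,x)$ via Lemma \ref{different_form_generalized_matching} and the remaining terms assembling into $\sum_{C\in\Gamma(c)}(-1)^{\textnormal{comp}(C)}w_2(C)\mu_w(G\setminus C,x)$. The only difference is cosmetic: the paper splits off the cycle-free case before partitioning by $C$, whereas you handle it uniformly by admitting $C=\varnothing$.
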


\begin{proof} Let $\vert V(G)\vert=n$ and $\phi_w(G,x)=\sum_{r=0}^n c_rx^r$. By Lemma \ref{determinant_characteristic_spanning},  $c_n=1$ and for $i=0,\dots, n-1$, $c_{i}=\sum_{\gamma\in\Gamma_i}(-1)^{\textnormal {comp} (\gamma)}\vert w(\gamma\setminus C_{\gamma})\vert^2w_2(C_{\gamma})$,  where $\Gamma_i$ is the set of all elementary subgraphs of $G$ with $n-i$ vertices and
$C_{\gamma}$ is the union of all the cycles in $\gamma$.  Also $c_{n-1}=0$.

Let $\Gamma_i(1)=\{\gamma\in\Gamma_i\ :\ \gamma\ \text {does not contain any cycle}\}$ and $\Gamma_i(2)=\Gamma_i\setminus \Gamma_i(1)$. Let $g(\gamma)=(-1)^{\textnormal {comp} (\gamma)}\vert w(\gamma\setminus C_{\gamma})\vert^2w_2(C_{\gamma})$. Then
\begin{equation}
\phi_{(w, w_{1})}(G,x)=x^n+\sum_{r=0}^{n-2}  \sum_{\gamma\in\Gamma_r(1)}g(\gamma)x^r+
\sum_{r=0}^{n-2} \sum_{\gamma\in\Gamma_r(2)}g(\gamma)x^r.\notag
\end{equation}

Note that
\begin{equation}
\sum_{r=0}^{n-2} \sum_{\gamma\in\Gamma_r(1)}g(\gamma)x^r=\sum_{r=2}^{n} \sum_{\gamma\in\Gamma_{n-r}(1)}g(\gamma)x^{n-r}.\notag
\end{equation}
Now if $\gamma\in \Gamma_{n-r}(1)$ then $C_{\gamma}=\varnothing$ and $\textnormal {comp} (\gamma)$ is the number of $K_2$ in $\gamma$. Therefore $\vert w(\gamma\setminus C_{\gamma})\vert^2w_2(C_{\gamma})=\vert w(\gamma)\vert^2$, $\gamma$ is a $(r/2)$-matching in $G$ and the number of vertices in $\gamma$ is $r=2\textnormal {comp} (\gamma)$. This means that if $r$ is not even then the coefficient of $x^{n-r}$ is zero. Furthermore if $\gamma,\gamma'\in \Gamma_{n-r}(1)$ then $\textnormal {comp} (\gamma)=\textnormal {comp} (\gamma')$.  Let $d=\textnormal {comp} (\gamma)$. Then
\begin{align}
\sum_{r=2}^{n} \sum_{\gamma\in\Gamma_{n-r}(1)}g(\gamma)x^{n-r}& =\sum_{r=2}^{n} \sum_{\gamma\in\Gamma_{n-r}(1)}(-1)^{\textnormal {comp} (\gamma)}\vert w(\gamma)\vert^2x^{n-r}\notag\\
&=\sum_{d=1}^{\lfloor n/2\rfloor} (-1)^{d}\left (\sum_{M \in M_d(G)} \vert w(M)\vert^2\right) x^{n-2d}\notag
\end{align}
and by Lemma \ref{different_form_generalized_matching},
\begin{equation}
x^n+\sum_{r=0}^{n-2} \sum_{\gamma\in\Gamma_r(1)}g(\gamma)x^r=\mu_w(G,x).\notag
\end{equation}

Next
\begin{equation}
\sum_{r=0}^{n-2} \sum_{\gamma\in\Gamma_r(2)}g(\gamma)x^r=\sum_{r=2}^{n} \sum_{\gamma\in\Gamma_{n-r}(2)}g(\gamma)x^{n-r}.\notag
\end{equation}
For each $\gamma\in \Gamma_{n-r}(2)$, $C_{\gamma}\in\Gamma(c)$. We shall partition $\Gamma_{n-r}(2)$ according to $C\in\Gamma(c)$. Let
\begin{equation}
\Gamma_{n-r}(2)(C)=\{\gamma\in\Gamma_{n-r}(2)\ :\ \gamma\ \text {contains}\ C\ \text {and}\ \gamma\setminus C\ \text {is a disjoint union of}\ K_2\}.\notag
\end{equation}
Then $\{\Gamma_{n-r}(2)(C)\}_{C\in \Gamma(c)}$ is a partition for $\Gamma_{n-r}(2)$. If $\gamma\in \Gamma_{n-r}(2)(C)$ then $\textnormal {comp} (\gamma)=\textnormal {comp} (C)+\textnormal {comp} (\gamma\setminus C)$ and
the number of vertices in $\gamma$ is $r=2\textnormal {comp} (\gamma\setminus C)+\vert V(C)\vert$. This means that if $r\not\equiv \vert V(C)\vert\mod 2$ then the coefficient of $x^{n-r}$ is zero. Furthermore if $\gamma,\gamma'\in \Gamma_{n-r}(2)(C)$ then $\textnormal {comp} (\gamma\setminus C)=\textnormal {comp} (\gamma'\setminus C)$. So, $\gamma\setminus C$ and $\gamma'\setminus C$ are $((r-\vert V(C)\vert)/2)$-matching in $G\setminus C$. Let $d=\textnormal {comp} (\gamma\setminus C)$. Then by Lemma \ref{different_form_generalized_matching},
\begin{align}
\sum_{r=2}^{n} &\sum_{\gamma\in\Gamma_{n-r}(2)}g(\gamma)x^{n-r}\notag\\
&=\sum_{r=2}^{n} \sum_{\gamma\in\Gamma_{n-r}(2)}(-1)^{\textnormal {comp} (\gamma)}\vert w(\gamma\setminus C_{\gamma})\vert^2w_2(C_{\gamma})x^{n-r}\notag\\
 & =\sum_{r=2}^{n} \sum_{C\in\Gamma(c)} \sum_{\gamma\in\Gamma_{n-r}(2)(C)}(-1)^{\textnormal {comp} (\gamma)}\vert w(\gamma\setminus C_{\gamma})\vert^2w_2(C_{\gamma})x^{n-r}\notag\\
& =\sum_{r=2}^{n} \sum_{C\in\Gamma(c)} \sum_{\gamma\in\Gamma_{n-r}(2)(C)}(-1)^{\textnormal {comp} (C)+\textnormal {comp} (\gamma\setminus C)}\vert w(\gamma\setminus C_{\gamma})\vert^2w_2(C_{\gamma})x^{n-r}\notag\\
& =\sum_{r=2}^{n} \sum_{C\in\Gamma(c)} (-1)^{\textnormal {comp} (C)}w_2(C)\sum_{\gamma\in\Gamma_{n-r}(2)(C)}(-1)^{\textnormal {comp} (\gamma\setminus C)}\vert w(\gamma\setminus C)\vert^2x^{n-r}\notag\\
& =\sum_{C\in\Gamma(c)} (-1)^{\textnormal {comp} (C)}w_2(C)\sum_{r=2}^{n} \sum_{\gamma\in\Gamma_{n-r}(2)(C)}(-1)^{\textnormal {comp} (\gamma\setminus C)}\vert w(\gamma\setminus C)\vert^2x^{n-r}\notag\\
& =\sum_{C\in\Gamma(c)} (-1)^{\textnormal {comp} (C)}w_2(C)\sum_{d=0}^{\lfloor (n-\vert V(C)\vert)/2\rfloor} (-1)^{d}\left (\sum_{M \in M_d(G\setminus C)} \vert w(M)\vert^2\right)x^{n-\vert V(C)\vert-2d}\notag\\
& =\sum_{C\in\Gamma(c)} (-1)^{\textnormal {comp} (C)}w_2(C)\mu_w(G\setminus C,x).\notag
\end{align}
Hence the theorem holds.
\end{proof}

We wish to show that similar equation (Lemma \ref{pre_main_1}) holds even when $w_1(u)\neq 0$ for some $u\in V(G)$.  This will be done in Theorem \ref{main_1}. Before we do that, let us first prove Lemma \ref{lemma_before_main_1}.

\begin{lm}\label{lemma_before_main_1} Let $u\in V(G)$. Let $G_1$ be a graph isomorphic to $G$. We shall assume $V(G_1)=V(G)$, $E(G_1)=E(G)$ and the weight function $(t, t_1)$ on $G_1$ is defined by $t(e_{vv'})=w(e_{vv'})$ for all $e_{vv'}\in E(G_1)$, $t_1(u)=0$ and $t_1(v)=w_1(v)$ for all $v\in V(G_1)\setminus\{u\}$.

Let $G_2=G\setminus u$. Then
\begin{equation}
\eta_{(w,w_1)}(G,x) =\eta_{(t,t_1)}(G_1,x)-w_1(u)\eta_{(w,w_1)}(G_2,x).\notag
\end{equation}
\end{lm}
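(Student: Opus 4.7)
The plan is to expand both sides using the definition
\[ \eta_{(w,w_1)}(G,x) = \sum_{S \subseteq V(G)} (-1)^{|V(G\setminus S)|} w_1(G\setminus S)\, \mu_w(H_G(S),x), \]
and then partition the sum according to whether the chosen vertex $u$ belongs to $S$ or not. The claim is that the $u \notin S$ piece recovers $-w_1(u)\eta_{(w,w_1)}(G_2,x)$, while the $u \in S$ piece coincides with $\eta_{(t,t_1)}(G_1,x)$.

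First I would handle the contribution of subsets with $u \notin S$. For such $S$, we have $S \subseteq V(G_2)$, and the three bookkeeping identities $|V(G\setminus S)| = |V(G_2 \setminus S)| + 1$, $\ w_1(G\setminus S) = w_1(u)\, w_1(G_2 \setminus S)$, and $H_G(S) = H_{G_2}(S)$ hold directly from the definitions (the last because no edge of $G$ incident to $u$ can lie in a subgraph induced by a set omitting $u$). Summing, the $u \notin S$ piece equals $-w_1(u)\eta_{(w,w_1)}(G_2,x)$.

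Next I would identify the $u \in S$ piece with $\eta_{(t,t_1)}(G_1,x)$. Because $t_1(u) = 0$, Lemma \ref{weighted_matching_with_zero_vertex_weight} lets us restrict the defining sum of $\eta_{(t,t_1)}(G_1,x)$ to subsets $S$ with $u \in S$. For any such $S$, the fact that $V(G_1)=V(G)$ and $E(G_1)=E(G)$ gives $H_{G_1}(S) = H_G(S)$ as graphs, and $t = w$ on edges gives $\mu_t(H_{G_1}(S),x) = \mu_w(H_G(S),x)$. Since $u \in S$ implies $u \notin V(G_1 \setminus S) = V(G \setminus S)$, and since $t_1$ and $w_1$ agree on every vertex other than $u$, we obtain $t_1(G_1 \setminus S) = w_1(G \setminus S)$ together with $|V(G_1 \setminus S)| = |V(G \setminus S)|$. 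Term-by-term, the $u \in S$ piece of $\eta_{(w,w_1)}(G,x)$ matches $\eta_{(t,t_1)}(G_1,x)$.

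This is really an exercise in bookkeeping rather than genuine combinatorics, so the only mild obstacle is being careful that each factor transforms in exactly the claimed way when passing between $G$, $G_1$, and $G_2$; in particular, that the induced subgraphs $H_G(S)$ are unchanged by either the vertex-weight modification (when $u \in S$) or the vertex deletion (when $u \notin S$). Summing the two cases yields the displayed identity.
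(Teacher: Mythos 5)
Your proposal is correct and follows essentially the same route as the paper: split the defining sum over $S\subseteq V(G)$ according to whether $u\in S$, identify the $u\notin S$ part with $-w_1(u)\eta_{(w,w_1)}(G_2,x)$ via the same three bookkeeping identities, and match the $u\in S$ part with $\eta_{(t,t_1)}(G_1,x)$ using Lemma \ref{weighted_matching_with_zero_vertex_weight}. No gaps.
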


\begin{proof} Note that
\begin{align}
 \eta_{(w,w_1)}(G,x) =\sum_{\substack{S\subseteq V(G),\\ u\in S}}(-1)^{\vert V(G\setminus S)\vert}&w_1(G\setminus S)\mu_{w} (H_G(S),x)+\notag\\
 &\sum_{\substack{S\subseteq V(G),\\ u\notin S}}(-1)^{\vert V(G\setminus S)\vert}w_1(G\setminus S)\mu_{w} (H_G(S),x).\notag
 \end{align}
For each $S\subseteq V(G)$ with $u\in S$, $(-1)^{\vert V(G\setminus S)\vert}=(-1)^{\vert V(G_1\setminus S)\vert}$,
$w_1(G\setminus S)=t_1(G_1\setminus S)$ and $H_G(S)=H_{G_1}(S)$. Therefore
\begin{align}
 \sum_{\substack{S\subseteq V(G),\\ u\in S}}(-1)^{\vert V(G\setminus S)\vert}&w_1(G\setminus S)\mu_{w} (H_G(S),x)=\notag\\
 &\sum_{\substack{S\subseteq V(G_1),\\ u\in S}}(-1)^{\vert V(G_1\setminus S)\vert}t_1(G_1\setminus S)\mu_{t} (H_{G_1}(S),x).\notag
 \end{align}
 By Lemma \ref{weighted_matching_with_zero_vertex_weight}, $\eta_{(t,t_1)}(G_1,x)=\sum_{S\subseteq V(G_1), u\in S}(-1)^{\vert V(G_1\setminus S)\vert}t_1(G_1\setminus S)\mu_{t} (H_{G_1}(S),x)$.

For each $S\subseteq V(G)$ with $u\notin S$, $(-1)^{\vert V(G\setminus S)\vert}=-(-1)^{\vert V(G_2\setminus S)\vert}$, $w_1(G\setminus S)=w_1(u)w_1(G_2\setminus S)$ and $H_G(S)=H_{G_2}(S)$. Therefore
\begin{align}
 \sum_{\substack{S\subseteq V(G),\\ u\notin S}}(-1)^{\vert V(G\setminus S)\vert}&w_1(G\setminus S)\mu_{w} (H_G(S),x)\notag\\
 &=-w_1(u)\sum_{\substack{S\subseteq V(G_2)}}(-1)^{\vert V(G_2\setminus S)\vert}w_1(G_2\setminus S)\mu_{w} (H_{G_2}(S),x)\notag\\
 &=-w_1(u)\eta_{(w,w_1)}(G_2,x),\notag
 \end{align}
and $ \eta_{(w,w_1)}(G,x) =\eta_{(t,t_1)}(G_1,x)-w_1(u)\eta_{(w,w_1)}(G_2,x)$.
\end{proof}

\begin{thm}\label{main_1} Let $\Gamma(c)$ be the set of all elementary subgraphs of $G$ which contains only cycles. Then
\begin {equation}
\phi_{(w,w_1)}(G,x)=\eta_{(w,w_1)} (G,x)+\sum_{C\in \Gamma (c)}(-1)^{\textnormal {comp}(C)}w_2(C)\eta_{(w,w_1)} (G\setminus C,x).\notag
\end {equation}
In particular $\phi_{(w,w_1)}(G,x)$ is a polynomial over the field of real number $\mathbb R$.
\end{thm}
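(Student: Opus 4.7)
The plan is to proceed by induction on $k$, the number of vertices $u \in V(G)$ with $w_1(u) \neq 0$. The base case $k = 0$ is immediate: Lemma \ref{weighted_to_edge_weighted_matching} gives $\eta_{(w,w_1)}(H,x) = \mu_w(H,x)$ for every induced subgraph $H$ of $G$, so the identity reduces to Lemma \ref{pre_main_1}.

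For the inductive step, I first need a companion recurrence for $\phi$ that mirrors the one for $\eta$ given in Lemma \ref{lemma_before_main_1}. Pick $u$ with $w_1(u) \neq 0$ and let $G_1$, $G_2$, and $(t,t_1)$ be as in that lemma. The matrices $xI - B_{(w,w_1)}(G)$ and $xI - B_{(t,t_1)}(G_1)$ agree in every entry except position $(u,u)$, where they differ by $w_1(u)$. Expanding the determinant along row $u$ (and noting that striking row and column $u$ from $xI - B_{(t,t_1)}(G_1)$ yields $xI_{n-1} - B_{(w,w_1)}(G_2)$) yields
\[
\phi_{(w,w_1)}(G,x) = \phi_{(t,t_1)}(G_1,x) - w_1(u)\, \phi_{(w,w_1)}(G_2,x).
\]

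Both $(G_1,t,t_1)$ and $(G_2, w, w_1)$ carry at most $k-1$ nonzero vertex weights, so induction applies to each. The next step is to substitute the two inductive formulas into the recurrence above and convert every $\eta_{(t,t_1)}$-term back to $\eta_{(w,w_1)}$ via Lemma \ref{lemma_before_main_1}. The key observation is that $\Gamma(c)$ splits as $\Gamma(c)_u \sqcup \Gamma(c)_{\neg u}$ according to whether $u$ is a vertex of the cycle subgraph, and $\Gamma(c)_{\neg u}$ coincides with the set of cycle elementary subgraphs of $G_2 = G \setminus u$. For $C \in \Gamma(c)_u$ one has $\eta_{(t,t_1)}(G_1 \setminus C,x) = \eta_{(w,w_1)}(G \setminus C,x)$, since $u \notin V(G\setminus C)$; while for $C \in \Gamma(c)_{\neg u}$ (and also for $C = \varnothing$), Lemma \ref{lemma_before_main_1} applied to $G\setminus C$ at the vertex $u$ yields
\[
\eta_{(t,t_1)}(G_1 \setminus C,x) = \eta_{(w,w_1)}(G\setminus C,x) + w_1(u)\, \eta_{(w,w_1)}(G_2 \setminus C,x).
\]

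The main obstacle is then the bookkeeping to check that the extra $w_1(u)$-terms produced from $\phi_{(t,t_1)}(G_1,x)$ (indexed by $\{\varnothing\}\cup\Gamma(c)_{\neg u}$) cancel precisely against $-w_1(u)\,\phi_{(w,w_1)}(G_2,x)$, which by induction expands as $-w_1(u)\bigl(\eta_{(w,w_1)}(G_2,x) + \sum_{C \in \Gamma(c)_{\neg u}} (-1)^{\textnormal{comp}(C)} w_2(C)\,\eta_{(w,w_1)}(G_2 \setminus C,x)\bigr)$. Once this cancellation is verified term-by-term, what remains is exactly $\eta_{(w,w_1)}(G,x) + \sum_{C \in \Gamma(c)} (-1)^{\textnormal{comp}(C)} w_2(C)\, \eta_{(w,w_1)}(G \setminus C,x)$, completing the inductive step. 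The ``in particular'' statement is then automatic: $\eta_{(w,w_1)}(H,x)$ has real coefficients because $w_1$ is real-valued and $\mu_w$ has real coefficients by Lemma \ref{different_form_generalized_matching}, and each $w_2(C)$ is real, as observed just before Lemma \ref{real_part_weight_function}.
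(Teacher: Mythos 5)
Your proposal is correct and follows essentially the same route as the paper's own proof: induction on the number of nonzero vertex weights, the additivity of the determinant in row $u$ to get $\phi_{(w,w_1)}(G,x)=\phi_{(t,t_1)}(G_1,x)-w_1(u)\phi_{(w,w_1)}(G_2,x)$, Lemma \ref{lemma_before_main_1} to convert the $\eta_{(t,t_1)}$-terms, and the split of $\Gamma(c)$ according to whether $u$ lies on the cycle subgraph. The cancellation you describe does go through exactly as claimed, so nothing is missing.
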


\begin{proof} Let $V(G)=\{1,2,\dots, n\}$. Let the number of non-zero in the sequence $w_1(1),w_1(2),\dots ,w_1(n)$ be denoted by $\kappa(G)$. We shall prove by induction on $\kappa(G)$. If $\kappa (G)=0$, that is $w_1(j)=0$ for all $j$, then the theorem holds (Lemma \ref{weighted_to_edge_weighted_matching} and Lemma \ref{pre_main_1}). Suppose $\kappa (G)>0$. Assume that the theorem holds for all graph $G'$ with $\kappa(G')<\kappa(G)$.

For convenience, we shall assume $w_1(1)\neq 0$ (similar argument can be used if $w_1(u)\neq 0$ for other $u$). Note that
\begin{equation}
\phi_{(w,w_1)}(G,x)=\det
\begin{pmatrix}
x-w_1(1) & -w(e_{12}) & \dots & -w(e_{1n})\\
-\overline {w(e_{12})} & x-w_2(2)& \dots& -w(e_{2n})\\
\vdots & \vdots &\ddots  &\vdots\\
-\overline {w(e_{1n})} & -\overline {w(e_{2n})}& \dots& x-w_1(n)
\end{pmatrix}.\notag
\end{equation}
So by Theorem 1.2.5 on p. 10 of \cite{Mir},
\begin{align}
\phi_{(w,w_1)}(G,x)=\det &
\begin{pmatrix}
x & -w(e_{12}) & \dots & -w(e_{1n})\\
-\overline {w(e_{12})} & x-w_1(2)& \dots& -w(e_{2n})\\
\vdots & \vdots &\ddots  &\vdots\\
-\overline {w(e_{1n})} & -\overline {w(e_{2n})}& \dots& x-w_1(n)
\end{pmatrix}+\notag\\
&\det \begin{pmatrix}
-w_1(1) & -w(e_{12}) & \dots & -w(e_{1n})\\
0 & x-w_1(2)& \dots& -w(e_{2n})\\
\vdots & \vdots &\ddots  &\vdots\\
0 & -\overline {w(e_{2n})}& \dots& x-w_1(n)
\end{pmatrix}.\notag
\end{align}
Let $G_1$ be a graph isomorphic to $G$. We may assume $V(G_1)=V(G)$ and $E(G_1)=E(G)$. Now let us define the weight function $(t, t_1)$ on $G_1$. Set $t(e_{uv})=w(e_{uv})$ for all $e_{uv}\in E(G_1)$, $t_1(1)=0$ and $t_1(j)=w_1(j)$ for all $j\geq 2$. Then
\begin{align}
\phi_{(t,t_1)}(G_1,x)=\det
\begin{pmatrix}
x & -w(e_{12}) & \dots & -w(e_{1n})\\
-\overline {w(e_{12})} & x-w_1(2)& \dots& -w(e_{2n})\\
\vdots & \vdots &\ddots  &\vdots\\
-\overline {w(e_{1n})} & -\overline {w(e_{2n})}& \dots& x-w_1(n)\notag
\end{pmatrix}
\end{align}
and by induction  (for $\kappa (G_1)<\kappa(G)$),
\begin{equation}
\phi_{(t,t_1)}(G_1,x)=\eta_{(t,t_1)} (G_1,x)+\sum_{C\in \Gamma (c)}(-1)^{\textnormal {comp}(C)}w_2(C)\eta_{(t,t_1)} (G_1\setminus C,x).\notag
\end{equation}

Let $G_2=G\setminus 1$. Then
\begin{align}
\phi_{(w,w_1)}(G_2,x)&=\det
\begin{pmatrix}
 x-w_1(2)& \dots& -w(e_{2n})\\
\vdots &\ddots  &\vdots\\
-\overline {w(e_{2n})}& \dots& x-w_1(n)\notag
\end{pmatrix}
\end{align}
and by induction  (for $\kappa (G_2)<\kappa(G)$),
\begin{equation}
\phi_{(w,w_1)}(G_2,x)=\eta_{(w,w_1)} (G_2,x)+\sum_{C\in \Gamma_2 (c)}(-1)^{\textnormal {comp}(C)}w_2(C)\eta_{(w,w_1)} (G_2\setminus C,x),\notag
\end{equation}
where $\Gamma_2 (c)$ is the set of all elementary subgraphs of $G_2$ which contains only cycles.

Note that $\phi_{(w,w_1)}(G,x)=\phi_{(t,t_1)}(G_1,x)-w_1(1)\phi_{(w,w_1)}(G_2,x)$ and by Lemma \ref{lemma_before_main_1}, $ \eta_{(w,w_1)}(G,x) =\eta_{(t,t_1)}(G_1,x)-w_1(1)\eta_{(w,w_1)}(G_2,x)$.

Next note that
\begin{align}
\sum_{C\in \Gamma (c)}(-1)^{\textnormal {comp}(C)}&w_2(C)\eta_{(t,t_1)} (G_1\setminus C,x)\notag=\\
 &\sum_{\substack {C\in \Gamma (c)\\ 1\in C}}(-1)^{\textnormal {comp}(C)}w_2(C)\eta_{(t,t_1)} (G_1\setminus C,x)+\notag\\
 &~~~~~\sum_{\substack {C\in \Gamma (c)\\ 1\notin C}}(-1)^{\textnormal {comp}(C)}w_2(C)\eta_{(t,t_1)} (G_1\setminus C,x).\notag
 \end{align}
For each $C\in \Gamma (c)$ with $1\in C$, we have $G_1\setminus C=G\setminus C$ (including the weight functions induced by it on the remaining vertices and edges in $G\setminus C$). Therefore $\eta_{(t,t_1)} (G_1\setminus C,x)=\eta_{(w,w_1)} (G\setminus C,x)$. For each $C\in \Gamma (c)$ with $1\notin C$, we have $\eta_{(w,w_1)} (G\setminus C,x)=\eta_{(t,t_1)} (G_1\setminus C,x)-w_1(1)\eta_{(w,w_1)} (G_2\setminus C,x)$ (Lemma \ref{lemma_before_main_1}). Therefore
\begin{align}
\sum_{\substack {C\in \Gamma (c)\\ 1\in C}}(-1)^{\textnormal {comp}(C)}&w_2(C)\eta_{(w,w_1)} (G\setminus C,x)=\notag\\
&\sum_{\substack {C\in \Gamma (c)\\ 1\in C}}(-1)^{\textnormal {comp}(C)}w_2(C)\eta_{(t,t_1)} (G_1\setminus C,x),\notag
\end{align}
and
\begin{align}
\sum_{\substack {C\in \Gamma (c)\\ 1\notin C}}(-1)^{\textnormal {comp}(C)}&w_2(C)\eta_{(w,w_1)} (G\setminus C,x)\notag=\\
 &\sum_{\substack {C\in \Gamma (c)\\ 1\notin C}}(-1)^{\textnormal {comp}(C)}w_2(C)\eta_{(t,t_1)} (G_1\setminus C,x)-\notag\\
 &~~~~~w_1(1)\sum_{C\in \Gamma_2 (c)}(-1)^{\textnormal {comp}(C)}w_2(C)\eta_{(w,w_1)} (G_2\setminus C,x).\notag
 \end{align}
Thus
\begin{align}
\sum_{\substack {C\in \Gamma (c)}}(-1)^{\textnormal {comp}(C)}&w_2(C)\eta_{(w,w_1)} (G\setminus C,x)\notag=\\
 &\sum_{\substack {C\in \Gamma (c)}}(-1)^{\textnormal {comp}(C)}w_2(C)\eta_{(t,t_1)} (G_1\setminus C,x)-\notag\\
 &~~~~~w_1(1)\sum_{C\in \Gamma_2 (c)}(-1)^{\textnormal {comp}(C)}w_2(C)\eta_{(w,w_1)} (G_2\setminus C,x),\notag
 \end{align}
and $\phi_{(w,w_1)}(G,x)=\eta_{(w,w_1)} (G,x)+\sum_{C\in \Gamma (c)}(-1)^{\textnormal {comp}(C)}w_2(C)\eta_{(w,w_1)} (G\setminus C,x)$.
\end{proof}

\begin{example}\label{example_5} Let $G$ and $(w,w_1)$ be as in Example \ref{example_2}. Note that the only element in $\Gamma (c)$ is $G$. Now $\eta_{(w,w_1)} (\varnothing,x)=1$, $w_2(G)=b_{v_1v_2}b_{v_2v_3}b_{v_3v_1}+b_{v_1v_3}b_{v_3v_2}b_{v_2v_1}=(1+2i)(2-7i)(-3-2i)+\left(-3+2i)(2+7i\right)(1-2i)=-108$. By Example \ref{example_2}, $\eta_{(w,w_1)} (G,x)=x^3-6x^2-60x+88$. So by Theorem \ref{main_1},
$\phi_{(w,w_1)}(G,x)=x^3-6x^2-60x+88+(-1)(-108)=x^3-6x^2-60x+196$ (see also Example \ref{example_4}).\qed
\end{example}

The following corollary follows from Theorem \ref{main_1}.

\begin{cor}\label{tree_matching_characteristic_identical} If $G$ is a disjoint union of trees (forest) then $\phi_{(w,w_1)}  (G,x)=\eta_{(w,w_1)}  ( G,x)$.\qed
\end{cor}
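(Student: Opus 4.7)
The plan is to deduce this corollary immediately from Theorem \ref{main_1}. Recall that the theorem asserts
\[
\phi_{(w,w_1)}(G,x) = \eta_{(w,w_1)}(G,x) + \sum_{C \in \Gamma(c)} (-1)^{\textnormal{comp}(C)} w_2(C)\, \eta_{(w,w_1)}(G \setminus C, x),
\]
where $\Gamma(c)$ denotes the set of elementary subgraphs of $G$ consisting entirely of cycles. So all I need to show is that the summation on the right-hand side is empty when $G$ is a forest.

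First I would observe that by definition every member $C$ of $\Gamma(c)$ is a nonempty disjoint union of cycles $C_r$ with $r \geq 3$, and in particular each such $C$ contains at least one cycle as a subgraph. Since $C$ is a subgraph of $G$, any cycle in $C$ is also a cycle in $G$. Hence $\Gamma(c) \neq \varnothing$ implies that $G$ contains a cycle.

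If $G$ is a disjoint union of trees, then $G$ contains no cycle, so by the contrapositive $\Gamma(c) = \varnothing$. The sum in Theorem \ref{main_1} is therefore empty, and the identity collapses to $\phi_{(w,w_1)}(G,x) = \eta_{(w,w_1)}(G,x)$, which is exactly the claim. There is no genuine obstacle here; the work has already been done in Theorem \ref{main_1}, and this corollary is simply the observation that the ``correction'' sum indexed by cyclic elementary subgraphs vanishes on forests.
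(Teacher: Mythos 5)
Your proof is correct and is exactly the argument the paper intends: the corollary is stated as an immediate consequence of Theorem \ref{main_1}, and your observation that $\Gamma(c)=\varnothing$ for a forest, so the correction sum vanishes, is precisely why it follows.
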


Note that the converse of Corollary \ref{tree_matching_characteristic_identical} is not true in general (see Example \ref{example_6}). However if the edge weight function is positive real-valued then it is true (Corollary \ref{tree_matching_characteristic_identical_converse}).

\begin{example}\label{example_6} Let $G$ be the graph in Figure 5, $V(G)=\{u_1,u_2,u_3,u_4,u_5\}$ and $(w,w_1)$ be as stated. Here we assume $u_1\equiv 1$, $u_2\equiv 2$, $u_3\equiv 3$, $u_4\equiv 4$ and $u_5\equiv 5$. Note that
\begin{equation}
\phi_{(w,w_1)} (G,x)=\det \begin{pmatrix}
x-2 & -1& 1-i & 0& 0\\
-1 & x-3& -1 & 0& 0\\
1+i & -1& x-4 & -1& -1\\
0 & 0& -1 & x-2& -1\\
0 & 0& -1 & -1& x-3
\end{pmatrix},\notag
\end{equation}
that is $\phi_{(w,w_1)} (G,x)=x^5-14x^4+70x^3-152x^2+135x-35$. Now by using the recurrence in Theorem \ref{basic_recurrence}, $\eta_{(w,w_1)}  ( G,x)=x^5-14x^4+70x^3-152x^2+135x-35$. Therefore $\phi_{(w,w_1)}  (G,x)=\eta_{(w,w_1)}  ( G,x)$ but $G$ is not a forest.

\begin{center}
\begin{pspicture}(0,0)(6,3)
\cnodeput(0.5, 2){1}{}
\cnodeput(0.5, 1){2}{}
\cnodeput(3, 1.5){3}{}
\cnodeput(4.5, 2){4}{}
\cnodeput(4.5, 1){5}{}
\ncline{1}{2}
\ncline{1}{3}
\ncline{3}{4}
\ncline{3}{5}
\ncline{3}{2}
\ncline{4}{5}
\rput(2.5,0){Figure 5.}
\rput(-0.5,1.5){$G=$}
\rput(0.3,1.5){$1$}
\rput(1.5,2){$-1+i$}
\rput(3.7,1.){$1$}
\rput(3.7,2){$1$}
\rput(1.5,1){$1$}
\rput(4.7,1.5){$1$}
\rput(3,1.8){$4$}
\rput(0.5,2.3){$2$}
\rput(0.5,0.7){$3$}
\rput(4.5,2.3){$2$}
\rput(4.5,0.7){$3$}
\rput(0.15,2){$u_1$}
\rput(0.15,1){$u_2$}
\rput(3,1.2){$u_3$}
\rput(4.85,2){$u_4$}
\rput(4.85,1){$u_5$}
\end{pspicture}
\end{center}\qed
\end{example}

\begin{cor}\label{tree_matching_characteristic_identical_converse} Suppose the edge weight function $w$ of $G$ is positive real-valued. Then $G$ is a disjoint union of trees (forest) if and only if $\phi_{(w,w_1)}  (G,x)=\eta_{(w,w_1)}  ( G,x)$.
\end{cor}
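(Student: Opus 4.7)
The forward implication is already established by Corollary \ref{tree_matching_characteristic_identical}, so the work lies entirely in the converse. I would argue the contrapositive: assuming $G$ is not a forest, I would exhibit a strictly nonzero term in the polynomial $\phi_{(w,w_1)}(G,x)-\eta_{(w,w_1)}(G,x)$ by reading off the leading coefficient of the correction sum from Theorem \ref{main_1}.

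By Theorem \ref{main_1},
\[
\phi_{(w,w_1)}(G,x)-\eta_{(w,w_1)}(G,x)=\sum_{C\in\Gamma(c)}(-1)^{\textnormal{comp}(C)}w_2(C)\,\eta_{(w,w_1)}(G\setminus C,x).
\]
Suppose $G$ contains at least one cycle, and let $g$ denote the girth of $G$. I would first note that each $\eta_{(w,w_1)}(G\setminus C,x)$ is a monic polynomial in $x$ of degree $|V(G)|-|V(C)|$ (by Lemma \ref{degree_weighted}, since $\mu_w$ is monic and so therefore is $\eta_{(w,w_1)}$). Hence each summand on the right hand side is a polynomial of degree $|V(G)|-|V(C)|$. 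Since every element $C\in\Gamma(c)$ is a disjoint union of cycles of length at least $g$, the maximum possible degree among these summands is $|V(G)|-g$, and it is attained precisely when $C$ is a single cycle of length exactly $g$.

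Therefore, setting $n=|V(G)|$, the coefficient of $x^{n-g}$ in the correction sum equals
\[
\sum_{\substack{C\text{ cycle of length }g}}(-1)^{1}\,w_2(C)\cdot 1 \;=\; -\sum_{\substack{C\text{ cycle of length }g}}w_2(C).
\]
Here is where the hypothesis that $w$ is positive real-valued enters decisively: Lemma \ref{real_part_weight_function} gives $w_2(C)>0$ for every cycle $C$. Since $g$ is the girth, there is at least one cycle of length $g$, so this coefficient is strictly negative. Consequently $\phi_{(w,w_1)}(G,x)-\eta_{(w,w_1)}(G,x)\neq 0$, contradicting the assumed equality.

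The only subtle point, and what I view as the main place one could slip, is confirming that no summand indexed by a $C$ with $|V(C)|>g$ contaminates the coefficient of $x^{n-g}$; this is immediate from the degree computation above, since such summands have strictly smaller degree. The positivity hypothesis on $w$ is essential precisely to prevent hidden cancellation within the sum over length-$g$ cycles, as illustrated by Example \ref{example_6} where non-real edge weights allow $\phi_{(w,w_1)}$ and $\eta_{(w,w_1)}$ to agree on a non-forest.
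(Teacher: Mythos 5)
Your proposal is correct and follows essentially the same route as the paper's own proof: both isolate the coefficient of $x^{n-g}$ (the paper phrases $g$ as the least cycle length rather than the girth) in the correction sum from Theorem \ref{main_1}, observe via Lemma \ref{degree_weighted} that only single shortest cycles can contribute to that coefficient, and conclude it equals $-\sum w_2(C)<0$ by Lemma \ref{real_part_weight_function}. No substantive differences to report.
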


\begin{proof} By Corollary \ref{tree_matching_characteristic_identical}, it is sufficient to prove that if $\phi_{(w,w_1)} (G,x)=\eta_{(w,w_1)}  ( G,x)$ then $G$ is a forest.

Suppose $G$ is not a forest. By Theorem \ref{main_1},
\begin{equation}
\phi_{(w,w_1)}(G,x)=\eta_{(w,w_1)} (G,x)+\sum_{C\in \Gamma (c)}(-1)^{\textnormal {comp}(C)}w_2(C)\eta_{(w,w_1)} (G\setminus C,x).\notag
\end{equation}
Therefore $\sum_{C\in \Gamma (c)}(-1)^{\textnormal {comp}(C)}w_2(C)\eta_{(w,w_1)} ( G\setminus C,x)=0$. Let $C$ be the cycle of the least length in $G$. Suppose there are exactly $m$ cycles of such length. Let it be denoted by $C_1,\dots, C_m$. Let us look at the coefficient of $x^{n-\vert C_1\vert}$. Now the summation over all $C_i$, $i=1,\dots,m$, contribute to the coefficient of $x^{n-\vert C_1\vert}$. If $C'\in \Gamma (c)$ and $C'\neq C_i$ for all $i=1,\dots, m$, then it does not contribute to $x^{n-\vert C_1\vert}$ because its length is greater and the degree of $\eta_{(w,w_1)}(G\setminus C',x)$ will be less than $x^{n-\vert C_1\vert}$ (Lemma \ref{degree_weighted}). Each of the $C_i$ contributes exactly $-w_2(C_i)\neq 0$ (by Lemma \ref{real_part_weight_function}, $w_2(C_i)>0$). Therefore the coefficient of $x^{n-\vert C_1\vert}$ is $-\sum_{i=1}^m w_2(C_i)\neq 0$ and  $\sum_{C\in \Gamma (c)}(-1)^{\textnormal {comp}(C)}w_2(C)\eta_{(w,w_1)} ( G\setminus C,x)\neq 0$, a contradiction. Hence $G$ is a forest.
\end{proof}

Note that Theorem \ref{main_1} and Corollary \ref{tree_matching_characteristic_identical_converse} are generalizations of Theorem 4 of \cite{GG} and Corollary 4.2 of \cite{GG}, respectively. This can be seen by taking $w(e)=1$ for all $e\in E(G)$ and $w_1(u)=0$ for all $u\in V(G)$, and noting that $\phi_{(w,w_1)}(G,x)$ is the usual characteristic polynomial of the adjacency matrix of $G$ (also together with Lemma \ref{edge_weighted_to_matching} and Lemma \ref{weighted_to_edge_weighted_matching}).

Now let us discuss the `ordering' in $V(G)$. Before we move on to the next two corollaries, it is a good idea to look at Example \ref{example_ordering} again. Now Corollary \ref{ordering_tree_characteristic} follows from Corollary \ref{tree_matching_characteristic_identical} and Lemma \ref{isomorphi_weighted}.

\begin{cor}\label{ordering_tree_characteristic} Let $G_1$ and $G_2$ be forests with weight function $(w,w_1)$ and $(w',w_1')$, respectively. If $G_1$ is weight-isomorphic to $G_2$, then $\phi_{(w,w_1)} (G_1,x)=\phi_{(w',w_1')} (G_2,x)$.\qed
\end{cor}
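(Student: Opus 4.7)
The plan is to combine the two ingredients the paper has already set up: Corollary \ref{tree_matching_characteristic_identical} identifies $\phi_{(w,w_1)}$ with $\eta_{(w,w_1)}$ on any forest, and Lemma \ref{isomorphi_weighted} says the weighted matching polynomial is an invariant of weight-isomorphism. Chaining these gives the result with essentially no additional work.

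Concretely, I would argue as follows. Since $G_1$ is a forest, Corollary \ref{tree_matching_characteristic_identical} gives $\phi_{(w,w_1)}(G_1,x)=\eta_{(w,w_1)}(G_1,x)$. By hypothesis $G_1$ is weight-isomorphic to $G_2$, so by Lemma \ref{isomorphi_weighted} we have $\eta_{(w,w_1)}(G_1,x)=\eta_{(w',w_1')}(G_2,x)$. Finally, a weight-isomorphism is in particular an isomorphism of the underlying graphs, so $G_2$ is also a forest, and another application of Corollary \ref{tree_matching_characteristic_identical} yields $\eta_{(w',w_1')}(G_2,x)=\phi_{(w',w_1')}(G_2,x)$. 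Stringing the three equalities together gives $\phi_{(w,w_1)}(G_1,x)=\phi_{(w',w_1')}(G_2,x)$.

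There is no real obstacle here; the only point worth emphasising is that being a forest is a property of the underlying graph (independent of the weights), so the assumption that $G_1$ is a forest automatically propagates to $G_2$ via the bijection supplied by the weight-isomorphism. In particular, no direct manipulation of the weighted adjacency matrices $B_{(w,w_1)}(G_1)$ and $B_{(w',w_1')}(G_2)$ is needed, which is fortunate because, as Example \ref{example_ordering} shows, those matrices genuinely depend on the chosen ordering of vertices and need not be similar even when $G_1$ and $G_2$ are weight-isomorphic. The forest hypothesis is doing exactly the work of eliminating the cycle-sum correction term in Theorem \ref{main_1}, which is what makes the characteristic polynomial insensitive to the ordering in this setting.
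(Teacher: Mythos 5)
Your proof is correct and is exactly the argument the paper intends: the text preceding the corollary states that it ``follows from Corollary \ref{tree_matching_characteristic_identical} and Lemma \ref{isomorphi_weighted},'' which is precisely your chain of three equalities. Your added remark that the forest property transfers to $G_2$ through the underlying graph isomorphism is a sensible detail the paper leaves implicit.
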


\begin{cor}\label{ordering_edge_weight_real_characteristic} Let $G_1$ and $G_2$ be graphs with weight function $(w,w_1)$ and $(w',w_1')$, respectively. If $G_1$ is weight-isomorphic to $G_2$ and the edge weight functions $w, w'$ take non-zero real number, then $\phi_{(w,w_1)} (G_1,x)=\phi_{(w',w_1')} (G_2,x)$.
\end{cor}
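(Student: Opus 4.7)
The plan is to reduce everything to Theorem~\ref{main_1} and exploit the observation that, when the edge weight function is real-valued, the cycle weights $w_2(C)$ become completely ``symmetric'' and depend only on the underlying edge weights of $C$, not on the labeling of vertices.

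First I would apply Theorem~\ref{main_1} to both sides, writing
\begin{align*}
\phi_{(w,w_1)}(G_1,x) &= \eta_{(w,w_1)}(G_1,x) + \sum_{C\in \Gamma(c)}(-1)^{\textnormal{comp}(C)} w_2(C)\, \eta_{(w,w_1)}(G_1\setminus C,x),\\
\phi_{(w',w_1')}(G_2,x) &= \eta_{(w',w_1')}(G_2,x) + \sum_{C'\in \Gamma'(c)}(-1)^{\textnormal{comp}(C')} w_2'(C')\, \eta_{(w',w_1')}(G_2\setminus C',x),
\end{align*}
where $\Gamma(c), \Gamma'(c)$ denote the sets of elementary subgraphs consisting only of cycles in $G_1, G_2$ respectively. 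By Lemma~\ref{isomorphi_weighted}, the first terms on each right-hand side are equal. It remains to match the cycle sums.

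Next I would use the weight-isomorphism $f : V(G_1) \to V(G_2)$ to produce a bijection $C \mapsto f(C)$ between $\Gamma(c)$ and $\Gamma'(c)$, which preserves the number of components and sends each cycle in $G_1$ to a cycle of the same length in $G_2$. For each such cycle $C$, condition (a) of weight-isomorphism guarantees that the induced subgraph $G_1\setminus C$ is (weight-)isomorphic to $G_2\setminus f(C)$ via the restriction of $f$, so by Lemma~\ref{isomorphi_weighted}
\[
\eta_{(w,w_1)}(G_1\setminus C,x) = \eta_{(w',w_1')}(G_2\setminus f(C),x).
\]

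The key remaining step (and the only real calculation) is to check that $w_2(C) = w_2'(f(C))$ under the real-valuedness hypothesis. By definition,
\[
w_2(C) = b_{v_1v_2}b_{v_2v_3}\cdots b_{v_{m-1}v_m}b_{v_mv_1} + b_{v_1v_m}b_{v_mv_{m-1}}\cdots b_{v_3v_2}b_{v_2v_1},
\]
where each $b_{uv}$ equals $w(e_{uv})$ or $\overline{w(e_{uv})}$ depending on the relative ordering of $u$ and $v$. Since $w$ takes non-zero real values, $\overline{w(e_{uv})} = w(e_{uv})$, so both summands collapse to $\prod_{e \in E(C)} w(e)$, giving $w_2(C) = 2\prod_{e\in E(C)} w(e)$, a quantity independent of any vertex ordering. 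The same holds for $w_2'(f(C))$ with $w'$ in place of $w$, and then condition (b) of weight-isomorphism yields $\prod_{e\in E(C)}w(e) = \prod_{e\in E(f(C))} w'(e)$, hence $w_2(C) = w_2'(f(C))$.

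Combining these observations term-by-term establishes the equality of the two cycle sums, and therefore $\phi_{(w,w_1)}(G_1,x) = \phi_{(w',w_1')}(G_2,x)$. The only mild subtlety is the symmetry argument for $w_2$; everything else is a mechanical transfer through Theorem~\ref{main_1} and Lemma~\ref{isomorphi_weighted}. Note that the hypothesis cannot be dropped: Example~\ref{example_ordering} shows that for purely imaginary edge weights, the two summands in $w_2(C)$ can become negatives of each other, so $w_2$ genuinely depends on the ordering and the conclusion fails.
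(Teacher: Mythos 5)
Your proposal is correct and follows essentially the same route as the paper: both reduce to Theorem~\ref{main_1} and Lemma~\ref{isomorphi_weighted}, with the key observation that real-valued edge weights force $b_{uv}=\overline{b_{vu}}=w(e_{uv})$, collapsing $w_2(C)$ to $2^{\textnormal{comp}(C)}\prod_{e\in E(C)}w(e)$, a quantity independent of the vertex ordering and preserved by the weight-isomorphism. The only cosmetic difference is that the paper first disposes of the forest case via Corollary~\ref{ordering_tree_characteristic}, which your version handles uniformly since the cycle sum is then empty.
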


\begin{proof} If $G_1$ is a forests then $G_2$ is also a forests. So we are done by Corollary \ref{ordering_tree_characteristic}. Suppose $G_1$ is not a forests. Then $G_2$ is also not a forests. Furthermore every cycle in $G_1$ is also a cycle in $G_2$. Now let us look at the value $w_2(C)$.

Suppose $C=v_1v_2\dots v_mv_1$, $m\geq 3$, is a cycle in $G_1$. Then
\begin{align}
w_2(C) &=b_{v_1v_2}b_{v_2v_3}\dots b_{v_{m-1}v_m}b_{v_mv_1}+b_{v_1v_m}b_{v_mv_{m-1}}\dots b_{v_{3}v_2}b_{v_2v_1}\notag\\
&=2b_{v_1v_2}b_{v_2v_3}\dots b_{v_{m-1}v_m}b_{v_mv_1}\notag\\
&=2w(e_{v_1v_2})w(e_{v_2v_3})\dots w(e_{v_{m-1}v_m})w(e_{v_mv_1})\notag\\
&=2w(C),\notag
\end{align}
where the second and third equalities follow from the fact that $b_{v_{j}v_{j+1}}=\overline{b_{v_{j+1}v_{j}}}=w(e_{v_{j}v_{j+1}})$ (for the edge weight function $w$ take non-zero real number).

Suppose $C$ is a disjoint union of $k$ cycles $C_1,\dots,C_k$. Then $w_2(C)=w_2(C_1)\ \dots\ w_2(C_k)=2^kw(C_1)\dots w(C_k)=2^kw(C)$. So the value of $w_2(C)$ is equal to $2^{\textnormal{comp}(C)}$ times
the product of all the weights on the edges in $C$.

Similarly $w_2'(C)$ is equal to $2^{\textnormal{comp}(C)}$ times
the product of all the weights on the edges in $C$. Therefore $w_2'(C)=w_2(C)$. It then follows from Theorem \ref{main_1} and Lemma \ref{isomorphi_weighted} that $\phi_{(w,w_1)} (G_1,x)=\phi_{(w',w_1')} (G_2,x)$.
\end{proof}

The next corollary follows from Corollary \ref{tree_matching_characteristic_identical} and the fact that all eigenvalues of a Hermitian matrix are real (see \cite[Theorem 7.5.1 on p. 209]{Mir}).
\begin{cor}\label{root_tree} If $T$ is a tree  then the roots of $\eta_{(w,w_1)}(T,x)$ are real. \qed
\end{cor}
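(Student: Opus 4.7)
The plan is to reduce the assertion to a statement about eigenvalues of a Hermitian matrix, using the machinery developed in Section 2. Specifically, a tree $T$ is a (trivial) forest, so Corollary \ref{tree_matching_characteristic_identical} applies and gives the identity
\[
\phi_{(w,w_1)}(T,x) = \eta_{(w,w_1)}(T,x).
\]
This identity is the crucial bridge: the right-hand side, whose roots we want to understand, is identified with the characteristic polynomial of the weighted adjacency matrix $B_{(w,w_1)}(T)$.

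Next I would invoke the fact that $B_{(w,w_1)}(T)$ is Hermitian by construction (its off-diagonal entries are $w(e_{uv})$ above the diagonal and $\overline{w(e_{uv})}$ below, and its diagonal entries are the real numbers $w_1(u)$), as was noted right after the definition of $B_{(w,w_1)}(G)$. Since every eigenvalue of a Hermitian matrix is real (the cited \cite[Theorem 7.5.1]{Mir}), the roots of $\phi_{(w,w_1)}(T,x) = \det(xI - B_{(w,w_1)}(T))$ are all real.

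Combining these two steps, every root of $\eta_{(w,w_1)}(T,x)$ is real, which is the desired conclusion. There is no substantive obstacle here, as all the heavy lifting has been carried out in Theorem \ref{main_1} and its consequence Corollary \ref{tree_matching_characteristic_identical}; the present corollary is essentially a one-line deduction. Note in particular that unlike the general situation discussed in Example \ref{example_ordering}, for a tree the identity $\phi_{(w,w_1)} = \eta_{(w,w_1)}$ is independent of the ordering chosen on $V(T)$ (cf.\ Corollary \ref{ordering_tree_characteristic}), so the argument does not depend on any auxiliary choice.
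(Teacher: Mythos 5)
Your proof is correct and is exactly the argument the paper intends: it deduces the corollary from Corollary \ref{tree_matching_characteristic_identical} together with the fact that the Hermitian matrix $B_{(w,w_1)}(T)$ has real eigenvalues (\cite[Theorem 7.5.1]{Mir}). The closing remark about independence of the vertex ordering is a harmless extra observation, not needed for the deduction.
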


Now if $w_1(u)=0$ for all $u\in V(T)$, we can say further on where it's roots lie. This will done in the next corollary.

\begin{cor}\label{root_tree_extra} Let $T$ be a tree. Suppose $w_1(u)=0$ for all $u\in V(T)$. If the maximum valency $\Delta$ of $T$ is greater than 1, then the roots of $\eta_{(w,w_1)}(T,x)$  lie in the interval $[-2b_0\sqrt {\Delta-1},2b_0\sqrt {\Delta-1}]$, where $b_0=\max_{e\in E(T)} \vert w(e)\vert$.
\end{cor}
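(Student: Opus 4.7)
The plan is to reduce bounding the roots of $\eta_{(w,w_1)}(T,x)$ to bounding the spectral radius of the usual $0$--$1$ adjacency matrix of $T$, and then to apply a test-function argument of Perron--Frobenius / Collatz--Wielandt type.

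First I would strip the phases off the edge weights. Since $w_1 \equiv 0$ on $V(T)$, Lemma~\ref{weighted_to_edge_weighted_matching} gives $\eta_{(w,w_1)}(T,x) = \mu_w(T,x)$, and Lemma~\ref{different_form_generalized_matching} shows that the coefficients of $\mu_w(T,x)$ depend on the edges only through $\vert w(e)\vert^2$. Setting $w'(e) = \vert w(e)\vert > 0$ therefore leaves the polynomial unchanged, and by Corollary~\ref{tree_matching_characteristic_identical} (applied to the tree $T$ with real-valued weight $w'$ and $w_1\equiv 0$) one has $\eta_{(w,w_1)}(T,x) = \mu_{w'}(T,x) = \phi_{(w',w_1)}(T,x)$. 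Consequently the roots of $\eta_{(w,w_1)}(T,x)$ are exactly the eigenvalues of the real symmetric non-negative matrix $B' = B_{(w',w_1)}(T)$.

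Next I would compare $B'$ to the usual adjacency matrix entrywise. Since every non-zero off-diagonal entry of $B'$ equals some $\vert w(e)\vert \leq b_0$ and the diagonal vanishes, $0 \leq B' \leq b_0\, A$ entrywise, where $A$ is the $0$--$1$ adjacency matrix of $T$. Monotonicity of the spectral radius for non-negative matrices then gives $\rho(B') \leq b_0\, \rho(A)$, and because $B'$ is Hermitian its eigenvalues lie in $[-\rho(B'),\rho(B')]$. It therefore suffices to prove $\rho(A) \leq 2\sqrt{\Delta-1}$. To this end, fix an arbitrary root $r \in V(T)$, let $d(u)$ denote the distance from $u$ to $r$ in $T$, and introduce the positive test vector $f(u) = (\Delta-1)^{-d(u)/2}$. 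For a non-root $u$, exactly one neighbour lies at distance $d(u)-1$ while at most $\Delta - 1$ neighbours lie at distance $d(u)+1$, and a direct calculation yields
\begin{equation}
(Af)(u) \leq \bigl((\Delta-1)^{1/2} + (\Delta-1)(\Delta-1)^{-1/2}\bigr) f(u) = 2\sqrt{\Delta-1}\, f(u).\notag
\end{equation}
For $u = r$, the root has at most $\Delta$ neighbours at distance $1$, so $(Af)(r) \leq \Delta\,(\Delta-1)^{-1/2}\, f(r)$, and $\Delta \leq 2(\Delta-1)$ precisely when $\Delta \geq 2$, which is exactly the standing hypothesis; hence $(Af)(r) \leq 2\sqrt{\Delta-1}\, f(r)$ too. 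Pairing $f$ against a Perron eigenvector of $A^{T}$ (equivalently, invoking the Collatz--Wielandt inequality $\rho(A) \leq \max_u (Af)(u)/f(u)$) then finishes the bound.

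The main obstacle is the root vertex, which has no parent to absorb one of its incident slots; the inequality for the root survives only thanks to $\Delta \geq 2$, which is precisely why the hypothesis $\Delta > 1$ is imposed. Everything else is a clean chain of consequences of the identities already established in the paper, in particular the tree identity $\phi_{(w',w_1)}=\eta_{(w',w_1)}$ from Corollary~\ref{tree_matching_characteristic_identical} that lets one translate the statement about roots of a matching polynomial into one about eigenvalues of a Hermitian matrix.
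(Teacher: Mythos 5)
Your proof is correct and follows essentially the same route as the paper: both arguments use Corollary \ref{tree_matching_characteristic_identical} (with $w_1\equiv 0$) to identify the roots of $\eta_{(w,w_1)}(T,x)$ with the eigenvalues of a Hermitian matrix that is dominated entrywise in modulus by $b_0$ times the $0$--$1$ adjacency matrix $A$ of $T$, and then conclude via the bound $\rho(A)\le 2\sqrt{\Delta-1}$ for trees of maximum valency $\Delta\ge 2$. The two small divergences are both refinements rather than a different method: you first replace $w$ by $\vert w\vert$ (harmless, since by Lemma \ref{different_form_generalized_matching} the coefficients of $\mu_w$ involve the edge weights only through $\vert w(e)\vert^2$), which turns the comparison with $b_0 A$ into a clean application of Perron--Frobenius monotonicity for non-negative symmetric matrices, whereas the paper manipulates the eigenvector equation of the complex Hermitian matrix directly; and you prove $\rho(A)\le 2\sqrt{\Delta-1}$ from scratch with the Collatz--Wielandt test vector $f(u)=(\Delta-1)^{-d(u)/2}$, including the correct treatment of the root vertex (the only place $\Delta\ge 2$ is used), whereas the paper simply cites Theorem 6.3 of Godsil's book for that bound.
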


\begin{proof} First note that $\phi_{(w,w_1)}(T,x)=\eta_{(w,w_1)}(T,x)=\mu_{w}(T,x)$ (Corollary \ref{tree_matching_characteristic_identical} and Lemma \ref{weighted_to_edge_weighted_matching}). Let $B=B_{(w,w_1)}(T)=[b_{uv}]$. Then $b_{uu}=0$ for all $u$. Let $b_0=\max_{u,v} \vert b_{uv}\vert$. Then $b_0=\max_{e\in E(T)} \vert w(e)\vert$. Let $C=[c_{uv}]$ where $c_{uv}=0$ if $b_{uv}=0$ and $c_{uv}=b_0$ if $b_{uv}\neq 0$. Set $w_0(e)=1$ for all $e\in E(T)$. Then $C=b_0B_{(w_0,w_1)}(T)$. Let $B_{(w_0,w_1)}(T)=[d_{uv}]$. Note that  $B_{(w_0,w_1)}(T)$ is the adjacency matrix of $T$.

Now let $\lambda$ be an eigenvalue of $B$ and $\mathbf x_0=(x_1,\dots ,x_n)$ be its corresponding eigenvector. Then
\begin{equation}
\vert \lambda\vert=\left \vert \min_{1\leq u\leq n, x_u\neq 0} \frac {\sum_{k=1}^n b_{uk}x_k}{x_u}\right\vert\leq b_0\min_{1\leq u\leq n, x_u\neq 0}  \frac {\sum_{k=1}^n d_{uk}\vert x_k\vert}{\vert x_u\vert}\leq b_0r,\notag
\end{equation}
where $r$ is a positive eigenvalue of $B_{(w_0,w_1)}(T)$ for which the absolute value of any eigenvalues of $B_{(w_0,w_1)}(T)$ is at most $r$ (see the discussion on p. 534, Proposition 2 on p. 535 and Theorem 1 on p. 536 of \cite{LT}).
By Theorem 6.3 on p. 87 of \cite{G0}, $r\leq 2\sqrt{\Delta-1}$. Hence $\vert \lambda\vert\leq 2b_0\sqrt{\Delta-1}$.
\end{proof}

\section{The Path-tree}

The notion of a path-tree of a graph was first introduced by  Godsil \cite[Section 2]{G1} (see also \cite[Section 6.1]{G0}). Let $G$ be a graph with a vertex $u$. The {\em path-tree} $T(G,u)$ is the tree with the paths in $G$ starting at $u$ as its vertices, and two such paths are joined by an edge if one is a maximal subpath of the other. The vertex $u$ is itself a path, and so it is a vertex of $T(G,u)$ and will also be denoted by $u$. Now let us assign the weight to $T(G,u)$. Note that two vertices, $p_1$ and $p_2$ in $V(T(G,u))$ are joined by an edge if and only if $p_1=p_2uv$ or $p_2=p_1uv$ for some edge $e_{uv}\in E(G)$. We set $w^{T}(e_{p_1p_2})=w(e_{uv})$.

Let $p$ be a vertex in $T(G,u)$. If $p=u$, we set $w_{1}^{T}(p)=w_1(u)$. If $p$ is a path with length at least 1, then we set $w_{1}^{T}(p)=w_1(v)$, where $v\neq u$ is an endpoint of $p$.

So for each graph $G$ and weight function $(w,w_1)$, there corresponds a path-tree $T(G,u)$ and weight function $(w^{T},w_{1}^{T})$. For convenience, when there is no confusion, we shall write $w^T$ as $w$ and $w_{1}^{T}$ as $w_1$.

Note that if $G$ is a tree, then $G$ is weight-isomorphic to $T(G,u)$. This can be seen by part (b) of Lemma 2.4 of \cite{G1} and keeping track of the weights on the edges and vertices.

The next theorem is a generalization of Theorem 2.5 of \cite{G1}. However  it's proof is similar to that in \cite{G1}. In fact it can be proved by using Lemma 2.4 of \cite{G1} and Theorem \ref{basic_recurrence}. The details of the proof are omitted.

\begin{thm}\label{main_2}
Let $u$ be a vertex in $G$ and $T:=T(G,u)$ be the path-tree of $G$ with respect to $u$. Then
\[ \frac{\eta_{(w,w_1)}(G \setminus u,x)}{\eta_{(w,w_1)}(G,x)} = \frac{\eta_{(w,w_1)}(T \setminus u,x)}{\eta_{(w,w_1)}(T,x)}. \]\qed
\end{thm}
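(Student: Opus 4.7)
The plan is to proceed by induction on $n = \vert V(G)\vert$, using the weighted vertex-deletion recurrence Theorem~\ref{basic_recurrence}(c) together with a structural description of the path-tree that goes back to Lemma 2.4 of \cite{G1}. The base case $n=1$ is immediate: $G$ consists of a single vertex $u$, one has $T=G$, and both sides equal $1/(x-w_1(u))$.

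For the inductive step, first I would apply Theorem~\ref{basic_recurrence}(c) to $G$ at $u$ and divide through by $\eta_{(w,w_1)}(G \setminus u,x)$, viewing the identity in the field of rational functions $\mathbb{C}(x)$ so as to sidestep any concern over vanishing denominators. This yields
\[ \frac{\eta_{(w,w_1)}(G,x)}{\eta_{(w,w_1)}(G \setminus u,x)} = (x-w_1(u)) - \sum_{v \sim u} \vert w(e_{uv})\vert^{2}\, \frac{\eta_{(w,w_1)}(G \setminus uv,x)}{\eta_{(w,w_1)}(G \setminus u,x)}. \]
Writing $H_v$ for the connected component of $G \setminus u$ containing the neighbor $v$, the multiplicativity of $\eta_{(w,w_1)}$ over disjoint unions (Theorem~\ref{basic_recurrence}(a)) reduces each ratio appearing in the sum to $\eta_{(w,w_1)}(H_v \setminus v,x)/\eta_{(w,w_1)}(H_v,x)$.

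The same manipulation will be carried out on $T=T(G,u)$ at its root $u$. The crucial structural input is the weighted decomposition
\[ T(G,u) \setminus u \;=\; \bigsqcup_{v \sim u} T(H_v, v), \]
which follows from Godsil's Lemma 2.4 together with a direct check on the induced weights: the bijection sending the path $u,v,v_1,\dots,v_k$ in $G$ to $v,v_1,\dots,v_k$ in $H_v$ preserves adjacency in the two path-trees, preserves the edge weight (both copies equal $w(e_{v_{i-1}v_i})$), and preserves the vertex weight (both equal $w_1(v_k)$, the value of $w_1$ at the far endpoint). Combining this with Theorem~\ref{basic_recurrence}(a) and (c) applied at $u$ in $T$ yields the analogous expression for $\eta_{(w,w_1)}(T,x)/\eta_{(w,w_1)}(T \setminus u,x)$, with each summand involving $T(H_v,v)$ in place of $H_v$. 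Since $\vert V(H_v)\vert < n$ for every $v$, the inductive hypothesis applied to $(H_v,v)$ identifies the two expressions term by term, and taking reciprocals gives the claimed identity.

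The main point that requires care is the weighted version of the path-tree decomposition $T(G,u) \setminus u = \bigsqcup_{v \sim u} T(H_v,v)$. The underlying graph-theoretic isomorphism is essentially Godsil's Lemma 2.4(b), but I would need to verify carefully that the weights induced from $G$ on both sides agree; this amounts to the bookkeeping observation that no edge or non-root vertex of $T \setminus u$ carries a label depending on $u$ itself, because the edge weight in $T$ between $p_1$ and $p_2=p_1 u'v'$ is $w(e_{u'v'})$ for some edge of $G$ not incident with $u$, and the vertex weight of a non-root path is read off its far endpoint. Once this is in hand, the remainder of the proof is a direct comparison of the two recursions.
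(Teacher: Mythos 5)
Your proposal is correct and follows essentially the same route the paper indicates for its (omitted) proof: induction on $\vert V(G)\vert$ combining the vertex-deletion recurrence of Theorem~\ref{basic_recurrence}(a),(c) with the weighted version of Godsil's path-tree decomposition $T(G,u)\setminus u=\bigsqcup_{v\sim u}T(H_v,v)$ from Lemma~2.4 of \cite{G1}. The weight bookkeeping you single out as the delicate point is exactly the extra verification the paper alludes to with ``keeping track of the weights,'' and your check of it is sound.
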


The next corollary follows easily from Theorem \ref{main_2}. For the sake of completeness, we shall give a proof.

\begin{cor}\label{divisibility_matching}  Let $G$ be a connected graph and $u$ be a vertex in $G$. Let $T=T( G,u)$ be the path tree of $G$. Then $\eta_{(w,w_1)} (G,x)$ divides $\eta_{(w,w_1)}(T,x)$.
\end{cor}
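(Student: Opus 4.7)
The plan is to induct on $|V(G)|$ and to convert Theorem \ref{main_2} from a ratio equality into a polynomial identity. Since by Lemma \ref{degree_weighted} both $\eta_{(w,w_1)}(G,x)$ and $\eta_{(w,w_1)}(T,x)$ are monic of positive degree (and in particular nonzero), Theorem \ref{main_2} is equivalent to the polynomial identity
\[
\eta_{(w,w_1)}(G,x)\,\eta_{(w,w_1)}(T\setminus u,x) \;=\; \eta_{(w,w_1)}(T,x)\,\eta_{(w,w_1)}(G\setminus u,x)
\]
in $\mathbb{C}[x]$. It therefore suffices to show that $\eta_{(w,w_1)}(G\setminus u,x)$ divides $\eta_{(w,w_1)}(T\setminus u,x)$: once that is established, the right-hand side exhibits $\eta_{(w,w_1)}(G,x)$ as a factor of $\eta_{(w,w_1)}(T,x)$, which is exactly the claim.

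To analyze $T\setminus u$, I would use the fact that in $T(G,u)$ the vertex $u$ is adjacent precisely to the length-$1$ paths $uv$ for $v\in N(u)$, so deleting $u$ splits $T$ into components, one for each $v\in N(u)$, and the component containing $uv$ is weight-isomorphic to the path-tree $T(G\setminus u,v)$ (a path starting with $uv$ in $G$ corresponds to the same path with the initial $u$ dropped in $G\setminus u$, and the induced weights match by the definition of $(w^T,w_1^T)$). Theorem \ref{basic_recurrence}(a) then gives
\[
\eta_{(w,w_1)}(T\setminus u,x) \;=\; \prod_{v\in N(u)}\eta_{(w,w_1)}(T(G\setminus u,v),x),
\]
while $\eta_{(w,w_1)}(G\setminus u,x)=\prod_{j=1}^{k}\eta_{(w,w_1)}(G_j,x)$, where $G_1,\dots,G_k$ are the connected components of $G\setminus u$.

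The induction then closes by matching each $G_j$ to a suitable factor on the tree side. Since $G$ is connected, every component $G_j$ contains at least one neighbor $v_j$ of $u$, and for this choice $T(G\setminus u,v_j)=T(G_j,v_j)$ because paths from $v_j$ in $G\setminus u$ stay inside $G_j$. Applying the inductive hypothesis to the strictly smaller connected graph $G_j$ at the distinguished vertex $v_j$ yields $\eta_{(w,w_1)}(G_j,x)\mid\eta_{(w,w_1)}(T(G_j,v_j),x)$; taking the product over $j$ and absorbing the remaining (nonzero) factors $\eta_{(w,w_1)}(T(G\setminus u,v),x)$ coming from neighbors $v\in N(u)\setminus\{v_1,\dots,v_k\}$ delivers $\eta_{(w,w_1)}(G\setminus u,x)\mid\eta_{(w,w_1)}(T\setminus u,x)$. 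The base case $|V(G)|=1$ is immediate since then $T=G$. The main obstacle I anticipate is the non-bijective bookkeeping in this last step: a single component $G_j$ may contain several neighbors of $u$, so the factors on the two sides cannot simply be paired off. Choosing one representative $v_j$ per component sidesteps this, with the \emph{extra} factors merely enlarging the quotient $\eta_{(w,w_1)}(T\setminus u,x)/\eta_{(w,w_1)}(G\setminus u,x)$ without spoiling its polynomiality, and the connectivity of $G$ is exactly what guarantees every $G_j$ supplies at least one such representative.
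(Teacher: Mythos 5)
Your proposal is correct and follows essentially the same route as the paper: cross-multiply Theorem \ref{main_2}, reduce the claim to $\eta_{(w,w_1)}(G\setminus u,x)\mid\eta_{(w,w_1)}(T\setminus u,x)$, identify the components of $T\setminus u$ with path-trees $T(G_j,v_j)$ of the components of $G\setminus u$ via a chosen neighbor $v_j$ of $u$ in each $G_j$, and close by induction using Theorem \ref{basic_recurrence}(a). Your explicit handling of the surplus factors coming from components containing several neighbors of $u$ is exactly the point the paper glosses over with ``divides'' rather than an equality, so the two arguments coincide in substance.
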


\begin{proof} If $G$ is a tree then by part (b) of Lemma 2.4 of \cite{G1}, we deduce that $G$ is weight-isomorphic to $T$. It then follows from Lemma \ref{isomorphi_weighted} that $\eta_{(w,w_1)} (G,x)=\eta_{(w,w_1)}(T,x)$. Hence  the corollary holds. We may assume inductively that the corollary holds for all connected subgraphs of $G$. Let $G\setminus u= H_1\cup\cdots\cup H_k$ where $H_1,\dots,H_k$ are components of $G\setminus u$.  Then by part (a) of Theorem \ref{basic_recurrence},
\begin {equation}
\eta_{(w,w_1)} (G\setminus u,x)=\prod_{j=1}^k\eta_{(w,w_1)} (H_j,x).\notag
\end {equation}

For each $j$, let $v_j\in V(H_j)$ be such that $e_{uv_j}$ is an edge in $G$. By part (c) of Lemma 2.4 of \cite{G1}, we deduce that (keeping track of the weights) the component of $T(G,u)\setminus u$ that contains the vertex $p_0=uv_j$ is isomorphic to the path tree $T(H_j,v_j)$. Note also that $T( G\setminus u,v_j)=T(H_j,v_j)$. Therefore by part (a) of Theorem \ref{basic_recurrence}, we deduce that $\prod_{j=1}^k\eta_{(w,w_1)} (T( G\setminus u,v_j),x)$ divides $\eta_{(w,w_1)} (T(G,u)\setminus u,x)$.
 By induction hypothesis, $\eta_{(w,w_1)} (H_j,x)$ divides $\eta_{(w,w_1)}(T( H_j,v_j),x)$.  Therefore $\eta_{(w,w_1)} (G\setminus u,x)$ divides $\eta_{(w,w_1)} (T(G,u)\setminus u,x)$.  By Theorem \ref{main_2},
\begin {equation}
\frac {\eta_{(w,w_1)}(G\setminus u,x)}{\eta_{(w,w_1)}(G,x)}=\frac {\eta_{(w,w_1)}(T\setminus u,x)}{\eta_{(w,w_1)}(T,x)}.\notag
\end {equation}
Hence $\eta_{(w,w_1)} (G,x)$ divides $\eta_{(w,w_1)}(T,x)$.
\end{proof}

The following two corollaries follows from part (a) of Theorem \ref{basic_recurrence}, Corollary \ref{divisibility_matching}, Corollary \ref{root_tree} and Corollary \ref{root_tree_extra}.

\begin{cor}\label{root_graph}  Let $G$ be a graph.  Then the roots of $\eta_{(w,w_1)} (G,x)$ are real.\qed
\end{cor}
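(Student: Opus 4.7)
The plan is to reduce the statement for general graphs to the already-established tree case, combining the multiplicativity of $\eta_{(w,w_1)}$ over disjoint unions with the path-tree divisibility. First I would observe that if $G$ decomposes into its connected components $G_1,\dots,G_k$, then part (a) of Theorem \ref{basic_recurrence} gives $\eta_{(w,w_1)}(G,x)=\prod_{j=1}^k\eta_{(w,w_1)}(G_j,x)$, and the multiset of roots of a product is the union of the multisets of roots of its factors. Thus it suffices to establish the claim under the assumption that $G$ is connected.

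Next, assuming $G$ connected, I would pick an arbitrary vertex $u\in V(G)$ and form the path-tree $T=T(G,u)$. By Corollary \ref{divisibility_matching}, $\eta_{(w,w_1)}(G,x)$ divides $\eta_{(w,w_1)}(T,x)$. Since $T$ is a tree, Corollary \ref{root_tree} ensures that every root of $\eta_{(w,w_1)}(T,x)$ is real. Every root of a divisor is a root of the dividend, so every complex root of $\eta_{(w,w_1)}(G,x)$ must be a root of $\eta_{(w,w_1)}(T,x)$, and hence real. This completes the reduction.

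There is essentially no genuine obstacle left at this stage: the substantive content is already carried by Corollary \ref{divisibility_matching} (which embeds $\eta_{(w,w_1)}(G,x)$ into the tree polynomial) and by Corollary \ref{root_tree} (which rests on the fact that $\phi_{(w,w_1)}(T,x)=\eta_{(w,w_1)}(T,x)$ for a tree and that Hermitian matrices have real spectra). The only small point one should pause over is that we are transferring a property of roots across a divisibility relation in $\mathbb{R}[x]$, but this is automatic because divisibility over any field forces the roots of the divisor to lie inside the root set of the dividend, regardless of whether the quotient has convenient structure. In summary, the proof will be a three-line chain: multiplicativity, divisibility into the path-tree, real-rootedness on the tree.
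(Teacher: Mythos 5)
Your proposal is correct and follows exactly the route the paper intends: the paper derives Corollary \ref{root_graph} from part (a) of Theorem \ref{basic_recurrence} (to reduce to connected components), Corollary \ref{divisibility_matching} (divisibility into the path-tree polynomial), and Corollary \ref{root_tree} (real roots for trees). Nothing is missing.
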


\begin{cor}\label{root_graph_extra}  Let $G$ be a graph. Suppose $w_1(u)=0$ for all $u\in V(G)$. If the maximum valency $\Delta$ of $G$ is greater than 1, then the roots of $\eta_{(w,w_1)}(G,x)$ lie in the interval $[-2b_0\sqrt {\Delta-1},2b_0\sqrt {\Delta-1}]$, where $b_0=\max_{e\in E(G)} \vert w(e)\vert$.\qed
\end{cor}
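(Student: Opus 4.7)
The plan is to reduce Corollary \ref{root_graph_extra} to the tree case (Corollary \ref{root_tree_extra}) via the path-tree construction of Section 3, together with the divisibility result Corollary \ref{divisibility_matching}. First I would decompose $G$ into its connected components $G_1,\dots,G_k$ and invoke part (a) of Theorem \ref{basic_recurrence} to factor $\eta_{(w,w_1)}(G,x) = \prod_{i=1}^{k} \eta_{(w,w_1)}(G_i,x)$, so it suffices to bound the roots of each factor. Components with $\Delta(G_i) \leq 1$ are handled directly: an isolated vertex has matching polynomial $x$ (root $0$), and a single edge $K_2$ has matching polynomial $x^2 - |w(e)|^2$ (roots $\pm|w(e)|$), both of which lie in $[-2b_0\sqrt{\Delta-1},\,2b_0\sqrt{\Delta-1}]$ since $\Delta \geq 2$ gives $2b_0\sqrt{\Delta-1} \geq 2b_0 \geq |w(e)|$.

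For a component $G_i$ with $\Delta(G_i) \geq 2$, pick any $u \in V(G_i)$ and form the path-tree $T := T(G_i,u)$ with induced weight function $(w^T,w_1^T)$. By Corollary \ref{divisibility_matching}, $\eta_{(w,w_1)}(G_i,x)$ divides $\eta_{(w^T,w_1^T)}(T,x)$, so every root of $\eta_{(w,w_1)}(G_i,x)$ is also a root of $\eta_{(w^T,w_1^T)}(T,x)$. From the definition of the induced vertex weight, $w_1 \equiv 0$ on $V(G_i)$ forces $w_1^T \equiv 0$ on $V(T)$, so Corollary \ref{root_tree_extra} will apply to $T$ provided I can check that $\Delta(T) \geq 2$.

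Next I would establish $\Delta(T) \leq \Delta$, $\Delta(T)\geq 2$, and $b_0^T \leq b_0$. Every edge of $T$ joins a path $p$ in $G_i$ to an extension $p\,vv'$ and carries weight $w(e_{vv'})$, so edge weights of $T$ are a subset of edge weights of $G_i$, giving $b_0^T \leq b_0$. A vertex $p \in V(T)$ that is a path ending at $v \in V(G_i)$ has as neighbors its immediate subpath (present iff $p \neq u$) together with the one-step extensions $p\,vv'$ for $v' \sim v$ in $G_i$ with $v' \notin V(p)$; the extensions miss at least the penultimate vertex of $p$ when $p \neq u$, so $\deg_T(p) \leq \deg_{G_i}(v) \leq \Delta$. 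For the lower bound, if $\deg_{G_i}(u) \geq 2$ then $\deg_T(u) = \deg_{G_i}(u) \geq 2$; otherwise the unique neighbor $v'$ of $u$ must satisfy $\deg_{G_i}(v') \geq 2$ (else $G_i = K_2$, contradicting $\Delta(G_i) \geq 2$), and the length-one path $p_0 = uv'$ has $\deg_T(p_0) = 1 + |\{v'' \sim v' : v'' \neq u\}| \geq 2$.

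Finally, Corollary \ref{root_tree_extra} applied to $T$ yields that all roots of $\eta_{(w^T,w_1^T)}(T,x)$ lie in $[-2b_0^T\sqrt{\Delta(T)-1},\,2b_0^T\sqrt{\Delta(T)-1}]$, which is contained in $[-2b_0\sqrt{\Delta-1},\,2b_0\sqrt{\Delta-1}]$ by monotonicity of $(t,s)\mapsto 2t\sqrt{s-1}$ in $t \geq 0$ and $s \geq 1$. This bounds the roots of each $\eta_{(w,w_1)}(G_i,x)$, and hence of $\eta_{(w,w_1)}(G,x)$, completing the proof. The main item requiring care is verifying the combinatorial valency bound $\Delta(T) \leq \Delta(G)$ from the structure of the path-tree; everything else is bookkeeping through part (a) of Theorem \ref{basic_recurrence} and Corollary \ref{divisibility_matching}.
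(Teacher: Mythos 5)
Your proposal is correct and follows essentially the same route as the paper, which derives Corollary \ref{root_graph_extra} directly from part (a) of Theorem \ref{basic_recurrence}, Corollary \ref{divisibility_matching} and Corollary \ref{root_tree_extra}. The additional checks you carry out --- that $w_1^T\equiv 0$, that $2\le\Delta(T)\le\Delta(G)$, that $b_0^T\le b_0$, and the handling of components with maximum valency at most $1$ --- are exactly the bookkeeping the paper leaves implicit.
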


\section{Vertex classification}

The following lemma can be deduced using equation (2) on p. 29 of \cite{G0} (see the proof of Theorem 5.3 on p. 29 of \cite{G0} for the details).

\begin{lm}\label{interlacing_matrix} Let $B=[b_{uv}]$ be an $n\times n$ Hermitian matrix. Let $\lambda_1,\lambda_2,\dots, \lambda_n$ be all the eigenvalues of $B$ with $\lambda_1\geq\lambda_2\geq \cdots\geq \lambda_n$.  Let $\theta_1,\theta_2,\dots, \theta_{n-1}$ be all the eigenvalues of $B(u;u)$ with $\theta_1\geq\theta_2\geq \cdots\geq \theta_{n-1}$ \textnormal {($B(u;u)$ is the matrix obtained from $B$ by deleting the $u$ row and the $u$ column)}.  Then
\begin {equation}
\lambda_1\geq \theta_1\geq \lambda_2\geq\theta_2\geq\cdots\geq \lambda_{n-1}\geq\theta_{n-1}\geq\lambda_n.\notag
\end {equation}\hfill\qed
\end{lm}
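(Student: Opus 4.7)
The plan is to invoke the Courant-Fischer min-max characterization of eigenvalues of a Hermitian matrix. Recall that for an $n\times n$ Hermitian $B$ with ordered eigenvalues $\lambda_1\geq\cdots\geq\lambda_n$, one has
$$\lambda_k=\max_{\substack{S\subseteq\mathbb{C}^n \\ \dim S=k}}\ \min_{\substack{x\in S \\ x\neq 0}}\frac{x^{*}Bx}{x^{*}x},\qquad k=1,\ldots,n.$$
The key observation is that $B(u;u)$ can be identified with the compression of $B$ to the coordinate hyperplane $W=\{x\in\mathbb{C}^n\ :\ x_u=0\}$: in the orthonormal basis of $W$ formed by the standard basis vectors $\{e_v\}_{v\neq u}$, the quadratic form $x\mapsto x^{*}Bx$ restricted to $W$ coincides with that of $B(u;u)$. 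Applying Courant-Fischer to $B(u;u)$ therefore gives
$$\theta_k=\max_{\substack{S\subseteq W \\ \dim S=k}}\ \min_{\substack{x\in S \\ x\neq 0}}\frac{x^{*}Bx}{x^{*}x}.$$

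From these variational formulas the two halves of the interlacing follow by subspace comparisons. For $\lambda_k\geq\theta_k$: every $k$-dimensional subspace of $W$ is a $k$-dimensional subspace of $\mathbb{C}^n$, so the max defining $\lambda_k$ ranges over a strictly larger family than that defining $\theta_k$, which yields the inequality. For $\theta_k\geq\lambda_{k+1}$: let $S_0$ be a $(k+1)$-dimensional subspace of $\mathbb{C}^n$ achieving $\lambda_{k+1}=\min_{0\neq x\in S_0}R(x)$, where $R(x)=x^{*}Bx/x^{*}x$. Since $W$ has codimension one in $\mathbb{C}^n$, $\dim(S_0\cap W)\geq k$, so any $k$-dimensional $S_1\subseteq S_0\cap W$ satisfies $\min_{0\neq x\in S_1}R(x)\geq\min_{0\neq x\in S_0}R(x)=\lambda_{k+1}$, and hence $\theta_k\geq\lambda_{k+1}$. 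Letting $k$ range over $1,\ldots,n-1$ assembles the full chain $\lambda_1\geq\theta_1\geq\lambda_2\geq\cdots\geq\theta_{n-1}\geq\lambda_n$.

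The only genuine obstacle is the Courant-Fischer characterization itself, but this is a standard result in Hermitian linear algebra for which several self-contained proofs exist (e.g.\ via the spectral decomposition of $B$ combined with dimension counting against the eigenspace filtration). An alternative, more graph-theoretic approach would be to cofactor-expand $\det(xI-B)$ along the row and column indexed by $u$, use the reality of all roots (Corollary \ref{root_graph}) and induct on $n$ to read off weak separation — but the variational route is cleaner and requires no additional structural machinery, so I would favour it.
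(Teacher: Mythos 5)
Your argument is correct, and it is a complete proof of Cauchy interlacing via the Courant--Fischer min-max principle: the identification of $B(u;u)$ with the compression of $B$ to the hyperplane $W=\{x:x_u=0\}$ is right, $\lambda_k\geq\theta_k$ follows because the subspaces competing for $\theta_k$ form a subfamily of those competing for $\lambda_k$, and $\theta_k\geq\lambda_{k+1}$ follows from the codimension-one intersection argument you give. The paper, however, does not argue this way: it offers no proof at all and simply defers to Godsil's book, where the lemma is obtained from the partial-fraction identity $\phi(B(u;u),x)/\phi(B,x)=\sum_r (E_r)_{uu}/(x-\lambda_r)$, with $(E_r)_{uu}\geq 0$ because the spectral idempotents $E_r$ are positive semidefinite; interlacing is then read off from the sign behaviour of this rational function between consecutive poles. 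The two routes buy different things. Yours is self-contained modulo the min-max theorem and needs no spectral decomposition identity. Godsil's is the one that meshes with the rest of this paper: the quantity $\eta_{(w,w_1)}(G\setminus u,x)/\eta_{(w,w_1)}(G,x)$ is exactly the object transported by Theorem \ref{main_2}, and the derivation of Lemma \ref{interlacing_root} from the present lemma goes through that ratio, so the rational-function viewpoint is already "paid for." One small caveat on your closing aside: the alternative you sketch would lean on Corollary \ref{root_graph}, which concerns roots of $\eta_{(w,w_1)}(G,x)$ rather than eigenvalues of an arbitrary Hermitian matrix, so it is not the right citation there (reality of Hermitian eigenvalues is what you would actually invoke); but since this is only a remark about a route you did not take, it does not affect the proof you gave.
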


\begin{lm}\label{interlacing_root} Let  $u\in V(G)$ and $\theta$ be a real number. Then
\begin {equation}
\textnormal {mult} (\theta,G,\eta_{(w,w_1)})-1\leq \textnormal {mult} (\theta, G\setminus u, \eta_{(w,w_1)})\leq \textnormal {mult} (\theta, G,\eta_{(w,w_1)})+1.\notag
\end {equation}
\end{lm}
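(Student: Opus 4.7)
The strategy is to transport the question from the graph $G$ to its path-tree $T := T(G,u)$, where interlacing for Hermitian matrices (Lemma \ref{interlacing_matrix}) is directly applicable, and then return to $G$ via the ratio identity of Theorem \ref{main_2}.

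First, since $T$ is a tree and $T\setminus u$ is a forest, Corollary \ref{tree_matching_characteristic_identical} gives
\[
\eta_{(w,w_1)}(T,x)=\phi_{(w,w_1)}(T,x), \qquad \eta_{(w,w_1)}(T\setminus u,x)=\phi_{(w,w_1)}(T\setminus u,x).
\]
The matrix $B:=B_{(w,w_1)}(T)$ is Hermitian, and the principal submatrix $B(u;u)$ obtained by deleting the row and column indexed by $u$ is precisely $B_{(w,w_1)}(T\setminus u)$ (up to re-indexing, which is harmless by Corollary \ref{ordering_tree_characteristic} since we are dealing with a forest). Hence $\eta_{(w,w_1)}(T,x)$ and $\eta_{(w,w_1)}(T\setminus u,x)$ are the characteristic polynomials of $B$ and $B(u;u)$, respectively.

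Second, I would apply Cauchy interlacing (Lemma \ref{interlacing_matrix}): if the eigenvalue $\theta$ of $B$ appears with multiplicity $m$ in the decreasing list $\lambda_1\geq\cdots\geq\lambda_n$, sitting in positions $k,\dots,k+m-1$, then the inequalities $\lambda_j\geq\theta_j\geq\lambda_{j+1}$ force $\theta_k=\cdots=\theta_{k+m-2}=\theta$, giving at least $m-1$ occurrences in $B(u;u)$. Symmetrically, if $\theta$ occurs $m'$ times among the $\theta_j$, the interlacing pins down at least $m'-1$ adjacent $\lambda_j$'s equal to $\theta$, so $m'\leq m+1$. This proves
\[
\textnormal{mult}(\theta,T,\eta_{(w,w_1)})-1 \leq \textnormal{mult}(\theta,T\setminus u,\eta_{(w,w_1)}) \leq \textnormal{mult}(\theta,T,\eta_{(w,w_1)})+1.
\]

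Third, Theorem \ref{main_2} gives
\[
\frac{\eta_{(w,w_1)}(G\setminus u,x)}{\eta_{(w,w_1)}(G,x)} \;=\; \frac{\eta_{(w,w_1)}(T\setminus u,x)}{\eta_{(w,w_1)}(T,x)},
\]
so the order of $\theta$ as a pole/zero of this common rational function is the same on both sides. Since this order equals $\textnormal{mult}(\theta,\cdot\setminus u)-\textnormal{mult}(\theta,\cdot)$ for either graph, we conclude
\[
\textnormal{mult}(\theta,G\setminus u,\eta_{(w,w_1)})-\textnormal{mult}(\theta,G,\eta_{(w,w_1)}) = \textnormal{mult}(\theta,T\setminus u,\eta_{(w,w_1)})-\textnormal{mult}(\theta,T,\eta_{(w,w_1)}),
\]
and substituting the tree-case bound from the previous paragraph yields the lemma.

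The only subtle step is the multiplicity-counting from Cauchy interlacing; once stated precisely, it is routine. Everything else is bookkeeping and invocation of the earlier structural results (Corollary \ref{tree_matching_characteristic_identical} and Theorem \ref{main_2}) that reduce the general $G$ to the Hermitian-matrix setting on the path-tree.
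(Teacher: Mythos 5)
Your proposal is correct and follows essentially the same route as the paper: pass to the path-tree $T(G,u)$, use Corollary \ref{tree_matching_characteristic_identical} to identify $\eta_{(w,w_1)}$ with the characteristic polynomial of the Hermitian matrix $B_{(w,w_1)}(T)$, apply Cauchy interlacing (Lemma \ref{interlacing_matrix}), and transfer the multiplicity difference back to $G$ via the ratio identity of Theorem \ref{main_2}. The only difference is that you spell out the multiplicity-counting step of the interlacing argument in more detail than the paper does, which is harmless.
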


\begin{proof} By Theorem \ref{main_2},
\begin {equation}
\frac{\eta_{(w,w_1)}(G \setminus u,x)}{\eta_{(w,w_1)}(G,x)} = \frac{\eta_{(w,w_1)}(T \setminus u,x)}{\eta_{(w,w_1)}(T,x)}.\notag
\end {equation}
Let $\eta_{(w,w_1)}( G\setminus u,x)/\eta_{(w,w_1)} (G,x)=(x-\theta)^rh(x)/g(x)$ where $h(x)$ and $g(x)$ are polynomials such that $h(\theta)\neq 0\neq g(\theta)$ and $r=\textnormal {mult} (\theta, G\setminus u,\eta_{(w,w_1)})-\textnormal {mult} (\theta, G,\eta_{(w,w_1)})$.

By Corollary \ref{tree_matching_characteristic_identical},  $\phi_{(w,w_1)} (T,x)=\eta_{(w,w_1)} (T,x)$  and $\phi_{(w,w_1)} (T\setminus u,x)=\eta_{(w,w_1)} (T\setminus u,x)$. Let $\textnormal {mult} (\theta, T,\eta_{(w,w_1)})=m$. By Lemma \ref{interlacing_matrix}, we deduce that $m-1\leq \textnormal {mult} (\theta, T\setminus u,\eta_{(w,w_1)})\leq m+1$. Now $\eta_{(w,w_1)}(T\setminus u,x)/\eta_{(w,w_1)}(T,x)=(x-\theta)^rh(x)/g(x)$ with $r=\textnormal {mult} (\theta, T\setminus u,\eta_{(w,w_1)})-\textnormal {mult} (\theta, T,\eta_{(w,w_1)})$. So, $-1\leq r\leq 1$ and the lemma holds.
\end{proof}

The following definition is motivated by Lemma \ref{interlacing_root} and followed Godsil's approach \cite[Section 3]{G2}

\begin {dfn}\label{vertex_classification_dfn} For any $u\in V(  G)$,
\begin {itemize}
\item [(a)] $u$ is $(\theta,w,w_1)$-\emph {essential} if $\textnormal {mult} (\theta,   G\setminus u,\eta_{(w,w_1)})=\textnormal {mult} (\theta,   G,\eta_{(w,w_1)})-1$,
\item [(b)] $u$ is $(\theta,w,w_1)$-\emph {neutral} if $\textnormal {mult} (\theta,   G\setminus u,\eta_{(w,w_1)})=\textnormal {mult} (\theta,   G,\eta_{(w,w_1)})$,
\item [(c)] $u$ is $(\theta,w,w_1)$-\emph {positive} if $\textnormal {mult} (\theta,   G\setminus u,\eta_{(w,w_1)})=\textnormal {mult} (\theta,   G,\eta_{(w,w_1)})+1$.
\end {itemize}
Furthermore if $u$ is not $(\theta,w,w_1)$-essential but it is adjacent to some $(\theta,w,w_1)$-essential vertex, we say $u$ is $(\theta,w,w_1)$-\emph{special}. A graph $G$ is said to be $(\theta,w,w_1)$-\emph{critical} if all vertices in $G$ are $(\theta,w,w_1)$-essential and $\textnormal {mult} (\theta, G,\eta_{(w,w_1)})=1$.
\end {dfn}

The following lemma is a generalization of \cite[Lemma 3.1]{G2}. However, its proof is similar to that in \cite{G2}. In fact we just need to compare the multiplicity of $\theta$ as a root on both sides on the equation in part (d) of Theorem \ref{basic_recurrence}. The details are omitted.

\begin{lm}\label{essential_vertex_exists} For any graph $G$, it has at least one $(\theta,w,w_1)$-essential vertex provided that $\theta$ is a root of $\eta_{(w,w_1)}(G,x)$.\qed
\end{lm}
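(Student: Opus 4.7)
The plan is to exploit part (d) of Theorem \ref{basic_recurrence}, namely
\[ \frac{d}{dx}\eta_{(w,w_1)}(G,x) \;=\; \sum_{v\in V(G)} \eta_{(w,w_1)}(G\setminus v, x), \]
and to compare the multiplicity of $\theta$ as a root of each side.

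First I would set $m := \textnormal{mult}(\theta, G, \eta_{(w,w_1)})$, which satisfies $m\geq 1$ since $\theta$ is assumed to be a root. Writing $\eta_{(w,w_1)}(G,x) = (x-\theta)^m f(x)$ with $f(\theta)\neq 0$, a direct differentiation shows
\[ \frac{d}{dx}\eta_{(w,w_1)}(G,x) = (x-\theta)^{m-1}\bigl(mf(x) + (x-\theta)f'(x)\bigr), \]
and the bracketed factor takes the value $mf(\theta)\neq 0$ at $x=\theta$. Thus the multiplicity of $\theta$ as a root of the left-hand side is \emph{exactly} $m-1$.

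Next I would argue by contradiction: suppose no vertex of $G$ is $(\theta,w,w_1)$-essential. By Definition \ref{vertex_classification_dfn} and the interlacing bound in Lemma \ref{interlacing_root}, every $v\in V(G)$ is then either $(\theta,w,w_1)$-neutral or $(\theta,w,w_1)$-positive, which means $\textnormal{mult}(\theta, G\setminus v, \eta_{(w,w_1)})\geq m$ for every $v$. Consequently $(x-\theta)^m$ divides each summand $\eta_{(w,w_1)}(G\setminus v,x)$, and hence divides the sum, which by the identity above is $\tfrac{d}{dx}\eta_{(w,w_1)}(G,x)$. This contradicts the fact that the multiplicity of $\theta$ in the derivative is $m-1<m$. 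Therefore at least one $v\in V(G)$ must be $(\theta,w,w_1)$-essential.

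There is no real obstacle here; the only point requiring care is that $m\geq 1$, so that the value $m-1$ is well-defined and strictly smaller than $m$, making the divisibility comparison sharp. The argument is a direct adaptation of Godsil's reasoning in \cite[Lemma 3.1]{G2}, with Theorem \ref{basic_recurrence}(d) replacing the corresponding recurrence for the unweighted matching polynomial.
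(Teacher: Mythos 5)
Your proof is correct and follows exactly the route the paper indicates: comparing the multiplicity of $\theta$ on both sides of the identity in part (d) of Theorem \ref{basic_recurrence}, noting that the derivative drops the multiplicity to exactly $m-1$ while, if no vertex were essential, every summand (and hence the sum) would be divisible by $(x-\theta)^m$ by Lemma \ref{interlacing_root}. The paper omits the details of this argument, but your write-up supplies precisely the intended ones.
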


The next lemma is a generalization of the Heilmann-Lieb equation \cite[Lemma 2.4]{G2} (see also \cite[Theorem 6.3]{HL} and \cite[Lemma 4.1 on p. 104]{G0}). This can be seen by taking $w(e)=1$ for all $e\in E(G)$ and $w_1(u)=0$ for all $u\in V(G)$ (also together with Lemma \ref{edge_weighted_to_matching} and Lemma \ref{weighted_to_edge_weighted_matching}). It can be proved by using induction on the number of edges and Theorem \ref{basic_recurrence} (following a similar argument as in \cite{G0}).

\begin{lm}\label{heilman_leib} Let $u,v \in V(G)$ and $u\neq v$. Then
\begin {align}
\eta_{(w,w_1)}( G\setminus u,x)\eta_{(w,w_1)}&(G\setminus v,x)-\eta_{(w,w_1)}(G,x)\eta_{(w,w_1)}(G\setminus uv,x)=\notag\\
&\sum_{p\in P_{uv}(G)} \left (\vert w(p)\vert\eta_{(w,w_1)}( G\setminus p,x)\right)^2,\tag{*}
\end {align}
where $P_{uv}(G)$ is the set of all the paths in $G$ that have $u$ and $v$ as endpoints.\qed
\end{lm}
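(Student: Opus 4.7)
The plan is to prove the identity $(*)$ by induction on $\vert E(G)\vert$, following Godsil's strategy in \cite[Lemma 4.1 on p.~104]{G0}. Two degenerate situations constitute the base: $\vert E(G)\vert = 0$, and more generally any $G$ in which $u$ has no neighbour. In either case $G$ decomposes as the disjoint union $\{u\} \cup (G \setminus u)$, so part (a) of Theorem \ref{basic_recurrence} gives $\eta_{(w,w_1)}(G,x) = (x - w_1(u))\,\eta_{(w,w_1)}(G \setminus u,x)$ and $\eta_{(w,w_1)}(G \setminus v,x) = (x - w_1(u))\,\eta_{(w,w_1)}(G \setminus uv,x)$. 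Substituting these into the left-hand side of $(*)$ yields $0$, and since $P_{uv}(G) = \varnothing$ the right-hand side also vanishes.

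For the inductive step, assume $u$ has at least one neighbour in $G$. Apply part (c) of Theorem \ref{basic_recurrence} to expand both $\eta_{(w,w_1)}(G,x)$ and $\eta_{(w,w_1)}(G \setminus v,x)$ at the vertex $u$:
\begin{align*}
\eta_{(w,w_1)}(G,x) &= (x-w_1(u))\,\eta_{(w,w_1)}(G \setminus u,x) - \sum_{y \sim u} \vert w(e_{uy})\vert^2\,\eta_{(w,w_1)}(G \setminus uy,x), \\
\eta_{(w,w_1)}(G \setminus v,x) &= (x-w_1(u))\,\eta_{(w,w_1)}(G \setminus uv,x) - \sum_{\substack{y \sim u \\ y \neq v}} \vert w(e_{uy})\vert^2\,\eta_{(w,w_1)}(G \setminus uvy,x).
\end{align*}
Substituting these into the left-hand side of $(*)$, the $(x-w_1(u))$-terms cancel, and isolating the contribution $y = v$ from the first sum when $e_{uv}\in E(G)$ gives
\begin{align*}
&\eta_{(w,w_1)}(G \setminus u,x)\,\eta_{(w,w_1)}(G \setminus v,x) - \eta_{(w,w_1)}(G,x)\,\eta_{(w,w_1)}(G \setminus uv,x) \\
&\quad = \delta\,\vert w(e_{uv})\vert^2\,\eta_{(w,w_1)}(G \setminus uv,x)^2 \\
&\qquad + \sum_{\substack{y \sim u \\ y \neq v}} \vert w(e_{uy})\vert^2\,\bigl[\eta_{(w,w_1)}(G \setminus uy,x)\,\eta_{(w,w_1)}(G \setminus uv,x) - \eta_{(w,w_1)}(G \setminus u,x)\,\eta_{(w,w_1)}(G \setminus uvy,x)\bigr],
\end{align*}
where $\delta = 1$ if $e_{uv}\in E(G)$ and $\delta = 0$ otherwise.

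Each bracket is exactly the left-hand side of $(*)$ applied to $H := G \setminus u$ with the vertex pair $(v,y)$; since $u$ is incident to at least one edge one has $\vert E(H)\vert < \vert E(G)\vert$, so the induction hypothesis supplies
\[\eta_{(w,w_1)}(G \setminus uy,x)\,\eta_{(w,w_1)}(G \setminus uv,x) - \eta_{(w,w_1)}(G \setminus u,x)\,\eta_{(w,w_1)}(G \setminus uvy,x) = \sum_{q \in P_{vy}(G \setminus u)} \bigl(\vert w(q)\vert\,\eta_{(w,w_1)}(G \setminus u \setminus q,x)\bigr)^2.\]
The proof is completed by the bijection $(y,q) \mapsto p$, where $p$ is the simple $u$-to-$v$ path in $G$ obtained by prepending $u$ to the $y$-to-$v$ path $q$ in $G \setminus u$; under this correspondence $V(p) = \{u\} \cup V(q)$ and hence $\eta_{(w,w_1)}(G \setminus p,x) = \eta_{(w,w_1)}(G \setminus u \setminus q,x)$, while $\vert w(p)\vert^2 = \vert w(e_{uy})\vert^2\,\vert w(q)\vert^2$, so every summand transports correctly; the length-$1$ path $u,v$ (which exists precisely when $\delta = 1$) accounts for the isolated term. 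The main technical point — and the only real obstacle — is verifying that this map is well-defined and bijective: every simple $u$-to-$v$ path of length $\geq 2$ is uniquely specified by its second vertex $y$ (a neighbour of $u$ distinct from $v$) and its tail $q\in P_{vy}(G\setminus u)$, and the case split on whether $u$ and $v$ are adjacent must be tracked carefully through the $\delta$-term.
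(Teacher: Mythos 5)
Your proof is correct and follows exactly the route the paper indicates for this lemma: induction on $\vert E(G)\vert$, expanding $\eta_{(w,w_1)}(G,x)$ and $\eta_{(w,w_1)}(G\setminus v,x)$ at $u$ via part (c) of Theorem \ref{basic_recurrence} and matching paths through $u$ with pairs $(y,q)$, as in Godsil's argument that the authors cite. The algebra, the handling of the $\delta$-term for the edge $e_{uv}$, and the bijection between $u$--$v$ paths of length at least $2$ and pairs $(y,q)$ all check out.
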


The next corollary is a generalization of \cite[Corollary 2.5]{G2}. It is a consequence of Lemma \ref{heilman_leib} and Lemma \ref{interlacing_root} (following a similar argument as in \cite{G2}).  The details are omitted.

\begin{cor}\label{path_interlacing} Let $p$ be a path of length at least 1 in $G$. Then
\begin {equation}
\textnormal {mult} (\theta,G\setminus p,\eta_{(w,w_1)})\geq \textnormal {mult} (\theta, G,\eta_{(w,w_1)})-1.\notag
\end {equation}\qed
\end{cor}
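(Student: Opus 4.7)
The plan is to apply the Heilmann--Lieb identity (Lemma~\ref{heilman_leib}) with $u, v$ taken to be the two endpoints of the path $p$, and then to control the $(x-\theta)$-multiplicities on both sides via Lemma~\ref{interlacing_root} together with the sum-of-squares structure of the right-hand side.

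Write $m = \textnormal{mult}(\theta, G, \eta_{(w,w_1)})$ and let $u, v$ be the endpoints of $p$, so that $p \in P_{uv}(G)$. Lemma~\ref{heilman_leib} gives
\begin{equation}
\eta_{(w,w_1)}(G\setminus u, x)\, \eta_{(w,w_1)}(G\setminus v, x) - \eta_{(w,w_1)}(G, x)\, \eta_{(w,w_1)}(G\setminus uv, x) = \sum_{p' \in P_{uv}(G)} \bigl(|w(p')|\, \eta_{(w,w_1)}(G\setminus p', x)\bigr)^2. \notag
\end{equation}
First I would bound the multiplicity of the left-hand side at $\theta$ from below: Lemma~\ref{interlacing_root} gives $\textnormal{mult}(\theta, G\setminus u, \eta_{(w,w_1)}) \geq m-1$ and $\textnormal{mult}(\theta, G\setminus v, \eta_{(w,w_1)}) \geq m-1$, so the first product has multiplicity at least $2m-2$. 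Applying Lemma~\ref{interlacing_root} twice in succession yields $\textnormal{mult}(\theta, G\setminus uv, \eta_{(w,w_1)}) \geq m-2$, so the second product also has multiplicity at least $m + (m-2) = 2m-2$. Hence the left-hand side has $(x-\theta)$-multiplicity at least $2m-2$.

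Next I would analyze the right-hand side, which is a sum of squares of real polynomials: indeed $\eta_{(w,w_1)}(H, x)$ has real coefficients for every induced subgraph $H$ (the quantities $|w(M)|^2$ and the values of $w_1$ entering its definition are all real), and each $|w(p')|$ is a positive real constant. The key ancillary fact I would verify is that for real polynomials $f_1, \dots, f_k$ with $s = \min_i \textnormal{mult}(\theta, f_i)$, the $(x-\theta)$-multiplicity of $\sum_i f_i^2$ equals exactly $2s$: writing $f_i = (x-\theta)^s g_i$, at least one $g_i$ satisfies $g_i(\theta) \neq 0$, so $\sum_i g_i(\theta)^2 > 0$. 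Applied to the right-hand side this forces
\begin{equation}
2 \min_{p' \in P_{uv}(G)} \textnormal{mult}\bigl(\theta, \eta_{(w,w_1)}(G\setminus p')\bigr) \geq 2m - 2, \notag
\end{equation}
so every $p' \in P_{uv}(G)$, and in particular the chosen path $p$, satisfies $\textnormal{mult}(\theta, G\setminus p', \eta_{(w,w_1)}) \geq m - 1$. The main subtle point is the sum-of-squares multiplicity identity, which genuinely uses the real-coefficient property of $\eta_{(w,w_1)}$; once that is in hand, the result follows by simply threading Lemma~\ref{interlacing_root} and Lemma~\ref{heilman_leib} through the identity.
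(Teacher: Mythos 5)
Your proposal is correct and follows essentially the same route as the paper, which states that this corollary "is a consequence of Lemma~\ref{heilman_leib} and Lemma~\ref{interlacing_root} (following a similar argument as in \cite{G2})" and omits the details. The sum-of-squares multiplicity argument you spell out (using that each $\eta_{(w,w_1)}(G\setminus p',x)$ is a nonzero real polynomial and each $|w(p')|>0$, so the right-hand side of (*) has $(x-\theta)$-multiplicity exactly twice the minimum over the paths) is precisely the intended mechanism, and your bookkeeping of the left-hand side via Lemma~\ref{interlacing_root} is accurate.
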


\begin{cor}\label{path_cover_maximum_multiplicity}  Let $G$ be a graph. Then
\begin{itemize}
\item [\textnormal{(a)}] the maximum multiplicity of a root of $\eta_{(w,w_1)}(G,x)$ is at most equal to the number of vertex-disjoint paths required to cover $G$,
\item [\textnormal{(b)}] the number of distinct roots of  $\eta_{(w,w_1)}(G,x)$ is at least equal to the number of vertices in the longest path in $G$.
\end{itemize}
\end{cor}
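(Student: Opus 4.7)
The plan for part (a) is iterated application of Corollary \ref{path_interlacing}. I would fix a minimum vertex-disjoint path cover $p_1, \ldots, p_k$ of $G$, so $k$ equals the path-cover number. Setting $G_0 = G$ and $G_i = G_{i-1} \setminus p_i$, one has $G_k = \varnothing$, and at each step $\textnormal{mult}(\theta, G_i, \eta_{(w,w_1)}) \geq \textnormal{mult}(\theta, G_{i-1}, \eta_{(w,w_1)}) - 1$: this follows from Corollary \ref{path_interlacing} when $p_i$ has length at least $1$, and from Lemma \ref{interlacing_root} when $p_i$ is a single vertex. Iterating gives $0 = \textnormal{mult}(\theta, \varnothing, \eta_{(w,w_1)}) \geq \textnormal{mult}(\theta, G, \eta_{(w,w_1)}) - k$, which is exactly part (a).

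For part (b), let $P$ be a longest path of $G$ and $\ell$ its number of vertices. The case $\ell = 1$ is trivial, since then $G$ has no edges and $\eta_{(w,w_1)}(G, x) = \prod_{v \in V(G)} (x - w_1(v))$ has at least one distinct root, so assume $\ell \geq 2$. Let $\theta_1, \ldots, \theta_d$ denote the distinct roots of $\eta_{(w,w_1)}(G, x)$ with multiplicities $m_1, \ldots, m_d$ satisfying $\sum_{i=1}^{d} m_i = n$; here $d$ is the quantity we want to bound below by $\ell$. The key move is a single application of Corollary \ref{path_interlacing} to the longest path $P$ itself, followed by a double count. Summing the inequality $\textnormal{mult}(\theta_i, G \setminus P, \eta_{(w,w_1)}) \geq m_i - 1$ over $i$ yields
\[ \sum_{i=1}^{d} \textnormal{mult}(\theta_i, G \setminus P, \eta_{(w,w_1)}) \geq n - d. \]
On the other hand, the same sum is a partial sum of the multiplicities of roots of $\eta_{(w,w_1)}(G \setminus P, x)$; since all these roots are real by Corollary \ref{root_graph} and the polynomial has degree $|V(G \setminus P)| = n - \ell$ by Lemma \ref{degree_weighted}, we also have
\[ \sum_{i=1}^{d} \textnormal{mult}(\theta_i, G \setminus P, \eta_{(w,w_1)}) \leq n - \ell. \]
Combining the two inequalities gives $d \geq \ell$.

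Both bounds are direct consequences of Corollary \ref{path_interlacing}, so no serious obstacle is anticipated. The only subtlety in part (b) is in identifying the right index set for the double count: the lower bound is obtained by summing over distinct roots of $\eta_{(w,w_1)}(G, x)$, while the upper bound exploits the fact that any sum of non-negative multiplicities evaluated at a subset of real numbers is bounded by the degree of the polynomial whose multiplicities are being summed.
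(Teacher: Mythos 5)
Your proposal is correct and follows essentially the same route as the paper: iterated deletion of the paths in a minimum cover via Corollary \ref{path_interlacing} (and Lemma \ref{interlacing_root} for single-vertex paths) for part (a), and a double count over the distinct roots after deleting a longest path for part (b). The only difference is cosmetic: where you correctly bound $\sum_{i}\textnormal{mult}(\theta_i, G\setminus P,\eta_{(w,w_1)})$ above by the degree $n-\ell$ (since $G\setminus P$ may have roots outside $\{\theta_1,\dots,\theta_d\}$), the paper states this as an equality, and your inequality is the more careful formulation while yielding the same conclusion.
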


\begin{proof} (a)  Let $p_1,\dots, p_k$ be all the vertex-disjoint paths that  cover $G$. By Corollary \ref{path_interlacing} and Lemma \ref{interlacing_root} (in the case if $p_j$ is a single vertex),  we have $\textnormal {mult} (\theta, G\setminus p_1,\eta_{(w,w_1)})\geq \textnormal {mult} (\theta,G,\eta_{(w,w_1)})-1$ and inductively $\left.\textnormal {mult} (\theta, G\setminus (p_1\cup\cdots\cup p_{k-1}\cup p_k),\eta_{(w,w_1)})\right.\geq \textnormal {mult} (\theta,G,\eta_{(w,w_1)})-k$. Note that $\left.\textnormal {mult} (\theta, G\setminus (p_1\cup\cdots\cup p_{k-1}\cup p_k),\eta_{(w,w_1)})\right.=0$. Hence $\textnormal {mult} (\theta,G,\eta_{(w,w_1)})\leq k$.

\noindent
(b) Let $p$ be a path in $G$. Let $\Theta$ be the set of all distinct roots of $\eta_{(w,w_1)}(G,x)$. Now $\textnormal {mult} (\theta, G\setminus p,\eta_{(w,w_1)})\geq \textnormal {mult} (\theta,G,\eta_{(w,w_1)})-1$ (Corollary \ref{path_interlacing} and Lemma \ref{interlacing_root})  implies that
\begin{equation}
\sum_{\theta\in\Theta} \textnormal {mult} (\theta,  G\setminus p,\eta_{(w,w_1)})\geq \sum_{\theta\in\Theta}\textnormal {mult} (\theta,G,\eta_{(w,w_1)})-\vert\Theta \vert.\notag
\end{equation}
 Since $\sum_{\theta\in\Theta} \textnormal {mult} (\theta, G\setminus p,\eta_{(w,w_1)})=V(G\setminus p)$ and $\sum_{\theta\in\Theta} \textnormal {mult} (\theta, G,\eta_{(w,w_1)})=V(G)$, we have $\vert \Theta\vert\geq  \vert V(p) \vert$.
\end{proof}

\begin{dfn}\label{dfn_essential_path} A path $p$ in $G$ is said to be $(\theta,w,w_1)$-\emph{essential} if
\begin{equation}
\textnormal {mult} (\theta,G\setminus p,\eta_{(w,w_1)})=\textnormal {mult} (\theta, G,\eta_{(w,w_1)})-1.\notag
\end{equation}
 So if a path $q$ is not $(\theta,w,w_1)$-essential, then  $\textnormal {mult} (\theta,G\setminus q,\eta_{(w,w_1)})\geq \textnormal {mult} (\theta, G,\eta_{(w,w_1)})$ (Corollary \ref{path_interlacing}).
\end{dfn}

Part (a) and (b) of the next lemma are generalizations of \cite[Lemma 3.2]{G2} and \cite[Lemma 3.3]{G2}, respectively. However their proofs are similar to that in \cite{G2}. In fact for part (a), it can be deduced from part (c) of Theorem \ref{basic_recurrence} and Corollary \ref{path_interlacing}, whereas for part (b), it can be deduced from Lemma \ref{heilman_leib} and Lemma \ref{interlacing_root}. The details are omitted.

\begin{lm}\label{general_properties_path} Let $G$ be a graph and $\theta$ be a root of $\eta_{(w,w_1)}(G,x)$. Then
\begin {itemize}
\item [\textnormal{(a)}]  for any $(\theta,w,w_1)$-essential vertex $u$ with $\theta\neq w_1(u)$,  there is a vertex $v$ such that the path $p=uv$ is $(\theta,w,w_1)$-essential,
\item [\textnormal{(b)}] if $u$ is not $(\theta,w,w_1)$-essential, then for any path $p$ that ends with $u$, $p$ is not $(\theta,w,w_1)$-essential.\qed
\end {itemize}
\end{lm}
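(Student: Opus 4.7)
Write $k=\textnormal{mult}(\theta,G,\eta_{(w,w_1)})$; essentiality of $u$ means $\textnormal{mult}(\theta,G\setminus u,\eta_{(w,w_1)})=k-1$. I would specialize Theorem \ref{basic_recurrence}(c) at $u$ and rearrange to
\[
\sum_{v\sim u}|w(e_{uv})|^2\,\eta_{(w,w_1)}(G\setminus uv,x)=(x-w_1(u))\,\eta_{(w,w_1)}(G\setminus u,x)-\eta_{(w,w_1)}(G,x).
\]
Because $\theta\ne w_1(u)$, the first term on the right vanishes at $\theta$ to order exactly $k-1$ and the second to order $k$, so their difference has multiplicity exactly $k-1$. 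By Corollary \ref{path_interlacing} applied to each length-$1$ path $uv$, every summand on the left has multiplicity at least $k-1$ at $\theta$; if all of them had multiplicity $\geq k$, the entire sum would too, a contradiction. Hence some neighbour $v$ of $u$ satisfies $\textnormal{mult}(\theta,G\setminus uv,\eta_{(w,w_1)})=k-1$, and the path $p=uv$ is $(\theta,w,w_1)$-essential.

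\textbf{Plan for part (b).} The base case is $p=\{u\}$ (length $0$), where $G\setminus p=G\setminus u$ and non-essentiality of $u$ gives $\textnormal{mult}(\theta,G\setminus p,\eta_{(w,w_1)})\geq k$ directly, so $p$ is not essential. For $p$ of length at least $1$, let $v$ denote the other endpoint and invoke the Heilmann-Lieb identity of Lemma \ref{heilman_leib}:
\[
\eta_{(w,w_1)}(G\setminus u,x)\,\eta_{(w,w_1)}(G\setminus v,x)-\eta_{(w,w_1)}(G,x)\,\eta_{(w,w_1)}(G\setminus uv,x)=\sum_{q\in P_{uv}(G)}\bigl(|w(q)|\,\eta_{(w,w_1)}(G\setminus q,x)\bigr)^2.
\]
The hypothesis supplies $\textnormal{mult}(\theta,G\setminus u,\eta_{(w,w_1)})\geq k$, while Lemma \ref{interlacing_root} gives $\textnormal{mult}(\theta,G\setminus v,\eta_{(w,w_1)})\geq k-1$ and, applied twice in succession, $\textnormal{mult}(\theta,G\setminus uv,\eta_{(w,w_1)})\geq k-1$. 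Consequently the left-hand side vanishes at $\theta$ to order at least $2k-1$.

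The bound $2k-1$ is just short of what is needed, and the main obstacle is closing this gap. I would resolve it by a parity/positivity argument on the right: writing $\eta_{(w,w_1)}(G\setminus q,x)=(x-\theta)^{m_q}h_q(x)$ with $h_q(\theta)\neq 0$ real, each squared summand has multiplicity $2m_q$ at $\theta$, and the coefficient of $(x-\theta)^{2\min_q m_q}$ in the sum equals $\sum_{q:\,m_q=\min}|w(q)|^2\,h_q(\theta)^2>0$ by real positivity (with no cancellation possible). Hence the right-hand side has even multiplicity $2\min_q m_q$ at $\theta$. Since the left-hand side equals the right, its multiplicity is even; combined with the lower bound $2k-1$ this forces multiplicity at least $2k$, hence $m_q\geq k$ for every $q\in P_{uv}(G)$. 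In particular $\textnormal{mult}(\theta,G\setminus p,\eta_{(w,w_1)})\geq k$, so $p$ is not essential.
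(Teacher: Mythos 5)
Your proposal is correct and follows exactly the route the paper indicates (the paper omits the details, pointing to Godsil's argument): part (a) from Theorem \ref{basic_recurrence}(c) together with Corollary \ref{path_interlacing}, and part (b) from the Heilmann--Lieb identity of Lemma \ref{heilman_leib} together with Lemma \ref{interlacing_root}, using the sum-of-squares parity of the right-hand side of (*). Both the exact-multiplicity count in (a) and the even-multiplicity argument closing the $2k-1$ gap in (b) are sound.
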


\section{Gallai-Edmonds decomposition}

We shall begin by showing that a $(\theta,w,w_1)$-special vertex is $(\theta,w,w_1)$-positive. This is a generalization of Corollary 4.3 of \cite{G2}.

\begin{lm}\label{special_vertex_positive} Let $u\in V(G)$. If $u$ is $(\theta,w,w_1)$-special then $u$ is $(\theta,w,w_1)$-positive.
\end{lm}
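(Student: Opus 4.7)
The plan is to apply the Heilmann--Lieb identity (Lemma \ref{heilman_leib}) to $u$ together with a chosen essential neighbour $v$, and then compare the order of vanishing of both sides at $\theta$. Since $u$ is $(\theta,w,w_1)$-special, it has an essential neighbour $v$ and is itself not essential, so by Lemma \ref{interlacing_root} it is either $(\theta,w,w_1)$-neutral or $(\theta,w,w_1)$-positive. I will derive a contradiction from assuming $u$ is neutral, which forces $u$ to be positive.

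So assume for contradiction that $u$ is neutral, and set $m:=\textnormal{mult}(\theta,G,\eta_{(w,w_1)})$; then $\textnormal{mult}(\theta,G\setminus u,\eta_{(w,w_1)})=m$ and $\textnormal{mult}(\theta,G\setminus v,\eta_{(w,w_1)})=m-1$. Applying Lemma \ref{interlacing_root} to $G\setminus u$ with the vertex $v$, and to $G\setminus v$ with the vertex $u$, and intersecting the resulting ranges, one finds
\[
k \;:=\; \textnormal{mult}(\theta,G\setminus uv,\eta_{(w,w_1)})\in\{m-1,\,m\}.
\]
Now invoke Lemma \ref{heilman_leib}:
\[
\eta_{(w,w_1)}(G\setminus u,x)\,\eta_{(w,w_1)}(G\setminus v,x)\;-\;\eta_{(w,w_1)}(G,x)\,\eta_{(w,w_1)}(G\setminus uv,x)\;=\;\sum_{p\in P_{uv}(G)}\bigl(|w(p)|\,\eta_{(w,w_1)}(G\setminus p,x)\bigr)^{2}.
\]
On the left-hand side, the first product vanishes at $\theta$ to order exactly $2m-1$ (with nonzero leading coefficient, since both factors split off pure powers of $(x-\theta)$ times something nonvanishing at $\theta$), while the second product vanishes to order $m+k$.

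The key structural fact about the right-hand side is that $\eta_{(w,w_1)}$ has \emph{real} coefficients (evident from the formula in Lemma \ref{different_form_generalized_matching} plus $w_1$ being real-valued), so each summand is the square of a real polynomial and hence vanishes to an \emph{even} order at $\theta$. Writing $k_p:=\textnormal{mult}(\theta,G\setminus p,\eta_{(w,w_1)})$ and $K:=\min_{p\in P_{uv}(G)}k_p$, the right-hand side vanishes to order exactly $2K$, because the coefficient at $(x-\theta)^{2K}$ is $\sum_{p:\,k_p=K}|w(p)|^{2}g_p(\theta)^{2}>0$, a sum of squares of nonzero reals. Corollary \ref{path_interlacing} gives $K\ge m-1$, and the length-$1$ path $p=uv$ forces $K\le k$.

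Finally split into cases on $k$. If $k=m-1$, then $K=m-1$ and the right-hand side has order $2m-2$; but both terms on the left have order $2m-1$, so their difference has order at least $2m-1>2m-2$, a contradiction. If $k=m$, then the second left-hand term has order $2m$ while the first has order $2m-1$, so the left-hand side has order \emph{exactly} $2m-1$ (odd); but the right-hand side has order $2K\in\{2m-2,2m\}$ (even), again a contradiction. Either way the neutral assumption fails, so $u$ is positive. The main obstacle is the parity argument: squeezing $k$ into $\{m-1,m\}$ via interlacing, and then using the sum-of-squares structure to force an even order on the right-hand side which clashes with the forced odd order $2m-1$ on the left when $k=m$.
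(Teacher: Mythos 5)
Your proof is correct, and it runs on the same engine as the paper's: assume $u$ is $(\theta,w,w_1)$-neutral, apply the Heilmann--Lieb identity (Lemma \ref{heilman_leib}) to $u$ and an essential neighbour $v$, and derive a contradiction by comparing orders of vanishing at $\theta$. The one substantive difference is how the contradiction is closed. The paper invokes part (b) of Lemma \ref{general_properties_path}: since $u$ is not essential, every path ending at $u$ --- in particular the edge $uv$ and every $p\in P_{uv}(G)$ --- is non-essential, so the second left-hand product and every summand on the right vanish to order at least $2k$, while the first left-hand product vanishes to order exactly $2k-1$. You instead avoid that lemma entirely: you squeeze $\textnormal{mult}(\theta,G\setminus uv,\eta_{(w,w_1)})$ into $\{m-1,m\}$ by two applications of Lemma \ref{interlacing_root}, and then exploit the parity of the right-hand side --- since $\eta_{(w,w_1)}$ has real coefficients, the sum of squares vanishes to an even order $2K$ with $K\ge m-1$ by Corollary \ref{path_interlacing}, which cannot match the order forced on the left in either case. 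Both routes are sound; yours trades the path lemma for the (correctly justified) reality of the coefficients of $\eta_{(w,w_1)}$ and a short case analysis, at the cost of being slightly longer than the paper's one-line multiplicity count.
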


\begin{proof} By Definition \ref{vertex_classification_dfn}, there is a $(\theta,w,w_1)$-essential vertex $v$ such that $e_{uv}\in E(G)$. Since $u$ is not $(\theta,w,w_1)$-essential, by part (b) of Lemma \ref{general_properties_path}, the path $p=uv$ is not $(\theta,w,w_1)$-essential.  By Corollary \ref{path_interlacing} (see also Definition \ref{dfn_essential_path}), $\textnormal {mult} (\theta,G\setminus uv,\eta_{(w,w_1)})=\textnormal {mult} (\theta, G\setminus p,\eta_{(w,w_1)})\geq k$ where $k=\textnormal {mult} (\theta,G,\eta_{(w,w_1)})$. Also $u$ is either $(\theta,w,w_1)$-positive or $(\theta,w,w_1)$-neutral (Lemma \ref{interlacing_root}).

Suppose $u$ is $(\theta,w,w_1)$-neutral. Then $\textnormal {mult} (\theta, G\setminus u,\eta_{(w,w_1)})=k$. Now the multiplicity of $\theta$ as a root of $\eta_{(w,w_1)}(G\setminus u,x)\eta_{(w,w_1)}(G\setminus v,x)$ is exactly $2k-1$ and the multiplicity of $\theta$ as a root of $\left.\eta_{(w,w_1)}(G,x)\eta_{(w,w_1)}(G\setminus uv,x)\right.$ is at least $2k$. This implies that the multiplicity of $\theta$ as a root of $\sum_{p\in P_{uv}(G)} (\vert w(p)\vert\eta_{(w,w_1)}(G\setminus p,x))^2$ is $2k-1$ (Lemma \ref{heilman_leib}) which is a contradiction since the multiplicity of $\theta$ as a root of $\sum_{p\in P_{uv}(G)} (\vert w(p)\vert\eta_{(w,w_1)}(G\setminus p,x))^2$ is at least $2k$. Hence $u$ is $(\theta,w,w_1)$-positive.
\end{proof}

By Definition \ref{vertex_classification_dfn} and Lemma \ref{special_vertex_positive}, we have
\begin {equation}
V(G)=D_{(\theta,w,w_1)}(G)\cup A_{(\theta,w,w_1)}(G)\cup P_{(\theta,w,w_1)}(G)\cup N_{(\theta,w,w_1)}(G),\notag
\end {equation}
where
\begin {itemize}
\item [] $D_{(\theta,w,w_1)}(G)$ is the set of all $(\theta,w,w_1)$-essential vertices in $G$,
\item [] $A_{(\theta,w,w_1)}(G)$ is the set of all $(\theta,w,w_1)$-special vertices in $G$,
\item [] $N_{(\theta,w,w_1)}(G)$ is the set of all $(\theta,w,w_1)$-neutral vertices in $G$,
\item [] $P_{(\theta,w,w_1)}(G)=Q_{(\theta,w,w_1)}(G)\setminus A_{(\theta,w,w_1)}(G)$, where  $Q_{(\theta,w,w_1)}(G)$ is the set of all $(\theta,w,w_1)$-positive vertices in $G$,
\end {itemize}
is a partition of $V(G)$.

The following lemma is a generalization of Theorem 4.2 of \cite{G2} and it's proof is similar to that in \cite{G2}. In fact for part (a), it can be deduced from Lemma \ref{interlacing_root}, whereas for part (b) and (c), it can be deduced by comparing the multiplicity of $\theta$ as a root in the equation (*) of  Lemma \ref{heilman_leib}. The details are omitted.

\begin {lm}\label{positive_vertex_deletion} Let $\textnormal {mult} (\theta,G,\eta_{(w,w_1)})=k$ and $v\in V(G)$ be $(\theta,w,w_1)$-positive. Then
\begin {itemize}
\item [\textnormal{(a)}] if $u$ is $(\theta,w,w_1)$-essential in $G$ then it is $(\theta,w,w_1)$-essential in $G\setminus v$,
\item [\textnormal{(b)}] if $u$ is $(\theta,w,w_1)$-positive in $G$ then it is $(\theta,w,w_1)$-essential or $(\theta,w,w_1)$-positive in $G\setminus v$,
\item [\textnormal{(c)}] if $u$ is $(\theta,w,w_1)$-neutral in $G$ then it is $(\theta,w,w_1)$-essential or $(\theta,w,w_1)$-neutral in $G\setminus v$.\qed
\end {itemize}
\end {lm}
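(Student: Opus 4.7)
The plan is to set $k = \textnormal{mult}(\theta, G, \eta_{(w,w_1)})$ and $b = \textnormal{mult}(\theta, G \setminus uv, \eta_{(w,w_1)})$, to record that $\textnormal{mult}(\theta, G \setminus v, \eta_{(w,w_1)}) = k+1$ since $v$ is $(\theta,w,w_1)$-positive, and then in every case to compare the two interlacing bounds that Lemma~\ref{interlacing_root} gives for $b$: one obtained by deleting $u$ from $G \setminus v$, the other by deleting $v$ from $G \setminus u$. The resulting knowledge of $b$ is precisely the knowledge of the classification of $u$ in $G \setminus v$. In case (b) alone the two interlacing bounds leave one extra value of $b$ which has to be eliminated; for this we bring in the Heilmann--Lieb identity of Lemma~\ref{heilman_leib} and a parity argument based on the fact that $\eta_{(w,w_1)}(H,x)$ has real coefficients (visible from the formulas in Lemma~\ref{different_form_generalized_matching} and the definition of $\eta_{(w,w_1)}$), so that the right-hand side of the identity is a sum of squares of real polynomials and hence has even multiplicity at every real $\theta$.

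For part (a), $\textnormal{mult}(\theta, G \setminus u) = k-1$. The two interlacings give $b \in \{k, k+1, k+2\}$ and $b \in \{k-2, k-1, k\}$ respectively, forcing $b = k$. Thus $\textnormal{mult}(\theta, (G\setminus v)\setminus u) = (k+1) - 1$, i.e.\ $u$ is $(\theta,w,w_1)$-essential in $G\setminus v$. For part (c), $\textnormal{mult}(\theta, G\setminus u) = k$; the same double-interlacing forces $b \in \{k, k+1\}$, corresponding exactly to $u$ being essential or neutral in $G\setminus v$.

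For part (b), $\textnormal{mult}(\theta, G\setminus u) = k+1$ and interlacing alone gives only $b \in \{k, k+1, k+2\}$; the task is to exclude $b = k+1$. Assume first $P_{uv}(G) \neq \varnothing$. In the identity
\[
\eta_{(w,w_1)}(G\setminus u,x)\,\eta_{(w,w_1)}(G\setminus v,x) - \eta_{(w,w_1)}(G,x)\,\eta_{(w,w_1)}(G\setminus uv,x) = \sum_{p \in P_{uv}(G)} \bigl(|w(p)|\,\eta_{(w,w_1)}(G\setminus p,x)\bigr)^{2},
\]
the first product on the left has multiplicity $(k+1)+(k+1) = 2k+2$ at $\theta$ and the second has multiplicity $k + b$. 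If $b = k+1$, the second product has multiplicity $2k+1$, so the left-hand side has multiplicity exactly $2k+1$, which is odd, while the right-hand side has even multiplicity at any real $\theta$; this is a contradiction. Hence $b \in \{k, k+2\}$, so $u$ is essential or positive in $G\setminus v$. If $P_{uv}(G) = \varnothing$, then $u$ and $v$ lie in different components of $G$ and Theorem~\ref{basic_recurrence}(a) shows that the classification of $u$ in $G$ and in $G\setminus v$ coincide, so $u$ is still positive in $G\setminus v$.

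The main obstacle is the parity step in (b); parts (a) and (c) are just bookkeeping with the two interlacing inequalities. A minor point to handle carefully is that $\eta_{(w,w_1)}$ has real coefficients (so the squares on the right-hand side of the Heilmann--Lieb identity are truly real and the even-multiplicity argument is valid), and that the $P_{uv}(G) = \varnothing$ case really is subsumed by the multiplicativity of $\eta_{(w,w_1)}$ over disjoint unions.
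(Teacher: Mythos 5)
Your proposal is correct and follows essentially the route the paper indicates (it omits the details, citing Godsil): double interlacing via Lemma~\ref{interlacing_root} pins down $\textnormal{mult}(\theta, G\setminus uv,\eta_{(w,w_1)})$ in parts (a) and (c), and the parity of the sum-of-squares right-hand side of the Heilmann--Lieb identity (*) rules out the neutral outcome in part (b). Your explicit treatment of the $P_{uv}(G)=\varnothing$ case and the observation that the sum of squares of nonzero real polynomials has even (and well-defined) multiplicity at $\theta$ are exactly the points the paper leaves to the reader, and you handle them correctly.
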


The following lemma can be proved similarly by comparing the multiplicity of $\theta$ on both sides of (*) of Lemma \ref{heilman_leib} (see \cite[Proposition 2.9]{KC}).

\begin{lm}\label{neutral_vertex_deletion} Let $\textnormal {mult} (\theta,G,\eta_{(w,w_1)})=k$ and $v\in V(G)$ be $(\theta,w,w_1)$-neutral. Then
\begin {itemize}
\item [\textnormal{(a)}] if $u$ is $(\theta,w,w_1)$-essential in $G$ then it is $(\theta,w,w_1)$-essential in $G\setminus v$,
\item [\textnormal{(b)}] if $u$ is $(\theta,w,w_1)$-positive in $G$ then it is either $(\theta,w,w_1)$-positive or $(\theta,w,w_1)$-neutral  in $G\setminus v$,
\item [\textnormal{(c)}] if $u$ is $(\theta,w,w_1)$-neutral in $G$ then it is either $(\theta,w,w_1)$-positive or $(\theta,w,w_1)$-neutral  in $G\setminus v$.\qed
\end {itemize}
\end{lm}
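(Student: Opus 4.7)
The plan is to adapt the parity strategy used in Lemma \ref{positive_vertex_deletion} to the case where the deleted vertex $v$ is $(\theta,w,w_1)$-neutral. I would apply the Heilmann-Lieb identity of Lemma \ref{heilman_leib} to the pair $u,v$ (with $u\neq v$):
\[
\eta_{(w,w_1)}(G\setminus u,x)\,\eta_{(w,w_1)}(G\setminus v,x)-\eta_{(w,w_1)}(G,x)\,\eta_{(w,w_1)}(G\setminus uv,x)=\sum_{p\in P_{uv}(G)}|w(p)|^{2}\,\eta_{(w,w_1)}(G\setminus p,x)^{2}.
\]
Each $\eta_{(w,w_1)}(H,x)$ is a real-coefficient polynomial because its defining expression is a real-linear combination of the polynomials $\mu_{w}(H_{G}(S),x)$, which have real coefficients (only $|w(M)|^{2}$ enters) and because the vertex weights $w_{1}$ are real. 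Combined with $|w(p)|^{2}>0$, this makes the right-hand side a positive-coefficient combination of squares of real polynomials, so its multiplicity at any real $\theta$ equals $2\min_{p}\textnormal{mult}(\theta,G\setminus p,\eta_{(w,w_1)})$, which is an even integer. Consequently the left-hand side must also have even multiplicity at $\theta$.

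Set $k=\textnormal{mult}(\theta,G,\eta_{(w,w_1)})$, which equals $\textnormal{mult}(\theta,G\setminus v,\eta_{(w,w_1)})$ because $v$ is neutral. Let $j=\textnormal{mult}(\theta,G\setminus u,\eta_{(w,w_1)})$ and $m=\textnormal{mult}(\theta,G\setminus uv,\eta_{(w,w_1)})$. Writing $A(x)$ and $B(x)$ for the two products on the left, $\textnormal{mult}(\theta,A)=j+k$ and $\textnormal{mult}(\theta,B)=k+m$. Lemma \ref{interlacing_root}, applied once to $G\setminus u$ with $v$ removed and once to $G\setminus v$ with $u$ removed, pins $m$ to $\{j-1,j,j+1\}\cap\{k-1,k,k+1\}$. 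The crux is then the elementary fact that whenever $\textnormal{mult}(\theta,A)\neq\textnormal{mult}(\theta,B)$, one has $\textnormal{mult}(\theta,A-B)=\min(\textnormal{mult}(\theta,A),\textnormal{mult}(\theta,B))$; and this value must be even.

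In case (a), $j=k-1$ gives $\textnormal{mult}(A)=2k-1$, while interlacing leaves $m\in\{k-1,k\}$. The choice $m=k$ would force $\textnormal{mult}(B)=2k>\textnormal{mult}(A)$ and hence the odd value $2k-1$ on $A-B$, which is impossible; so $m=k-1$ and $u$ remains essential in $G\setminus v$. In case (b), $j=k+1$ gives $\textnormal{mult}(A)=2k+1$, and interlacing yields $m\in\{k,k+1\}$; both are $\geq k$, so $u$ is neutral or positive in $G\setminus v$. In case (c), $j=k$ gives $\textnormal{mult}(A)=2k$, and interlacing yields $m\in\{k-1,k,k+1\}$; the possibility $m=k-1$ would produce $\textnormal{mult}(B)=2k-1<\textnormal{mult}(A)$ and the odd value $\textnormal{mult}(A-B)=2k-1$, again ruled out, leaving $m\in\{k,k+1\}$, as required.

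The main obstacle is purely bookkeeping: verifying that the interlacing constraints really do restrict $m$ to the quoted ranges and that the forbidden values always trip the parity test. Once these are checked, the three assertions of the lemma follow immediately, in direct analogy with the proof of Lemma \ref{positive_vertex_deletion}.
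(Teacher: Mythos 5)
Your proof is correct and follows exactly the route the paper indicates for this lemma: compare the multiplicity of $\theta$ on the two sides of the Heilmann--Lieb identity (*) of Lemma \ref{heilman_leib}, using that the right-hand side is a positive combination of squares of real polynomials and hence has even multiplicity at any real $\theta$, together with the interlacing bounds of Lemma \ref{interlacing_root}; your case analysis on $j$ and $m$ is accurate. The only point left implicit is the degenerate case $P_{uv}(G)=\varnothing$, where both sides vanish identically and ``even multiplicity'' is vacuous; there $\eta_{(w,w_1)}(G\setminus u,x)\,\eta_{(w,w_1)}(G\setminus v,x)=\eta_{(w,w_1)}(G,x)\,\eta_{(w,w_1)}(G\setminus uv,x)$ gives $m=j$ directly and all three conclusions follow.
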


\begin{lm}\label{essential_vertex_path}  Let $v,z$ be $(\theta,w,w_1)$-essential in $G$ and $\textnormal {mult} (\theta, G\setminus vz,\eta_{(w,w_1)})\geq \textnormal {mult} (\theta, G,\eta_{(w,w_1)})-1$. If  $p$ is a path in $G$ with endpoints $v$ and $z$, then $p$ is $(\theta,w,w_1)$-essential in $G$.
\end{lm}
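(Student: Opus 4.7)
The plan is to apply the Heilmann-Lieb identity from Lemma~\ref{heilman_leib} with the pair $v,z$:
\begin{equation*}
\eta_{(w,w_1)}(G\setminus v,x)\,\eta_{(w,w_1)}(G\setminus z,x)-\eta_{(w,w_1)}(G,x)\,\eta_{(w,w_1)}(G\setminus vz,x)=\sum_{p\in P_{vz}(G)}\bigl(|w(p)|\,\eta_{(w,w_1)}(G\setminus p,x)\bigr)^2,
\end{equation*}
and compare the order of vanishing at $x=\theta$ on both sides. Write $k=\textnormal{mult}(\theta,G,\eta_{(w,w_1)})$.

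The essentiality of $v$ and $z$ gives $\textnormal{mult}(\theta,G\setminus v,\eta_{(w,w_1)})=\textnormal{mult}(\theta,G\setminus z,\eta_{(w,w_1)})=k-1$, so the product $\eta_{(w,w_1)}(G\setminus v,x)\,\eta_{(w,w_1)}(G\setminus z,x)$ vanishes to order exactly $2k-2$ at $\theta$ with a nonzero real leading coefficient. The hypothesis $\textnormal{mult}(\theta,G\setminus vz,\eta_{(w,w_1)})\ge k-1$ then forces $\eta_{(w,w_1)}(G,x)\,\eta_{(w,w_1)}(G\setminus vz,x)$ to vanish to order at least $k+(k-1)=2k-1$ at $\theta$, which is strictly higher. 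Consequently the left-hand side has multiplicity exactly $2k-2$ at $\theta$.

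On the right-hand side each $\eta_{(w,w_1)}(G\setminus p,x)$ is a real polynomial (the defining sum produces real coefficients, as each $|w(M)|^2$ and each vertex-weight product is real) and each $|w(p)|^2$ is a positive real, so the sum is a sum of squares of real polynomials. For any such sum $\sum_p f_p^2$ the multiplicity at a real point $\theta$ equals $2\min_p\textnormal{mult}(\theta,f_p)$, because the leading non-negative coefficients of the individual squares cannot cancel. Corollary~\ref{path_interlacing} yields the uniform lower bound $\textnormal{mult}(\theta,G\setminus p,\eta_{(w,w_1)})\ge k-1$ for every $p\in P_{vz}(G)$. Equating the two sides forces $\min_{p\in P_{vz}(G)}\textnormal{mult}(\theta,G\setminus p,\eta_{(w,w_1)})=k-1$, so at least one $v$-$z$ path is $(\theta,w,w_1)$-essential.

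The hard part will be upgrading this minimum equality to the universal conclusion that every $p\in P_{vz}(G)$ satisfies $\textnormal{mult}(\theta,G\setminus p,\eta_{(w,w_1)})=k-1$. My plan is induction on $|V(G)|$: if $V(p)=V(G)$ the identity above collapses to a single square and $p$ itself attains the minimum; otherwise I would select $u\in V(G)\setminus V(p)$, classify it via Lemma~\ref{positive_vertex_deletion} or Lemma~\ref{neutral_vertex_deletion} to guarantee that $v$ and $z$ remain $(\theta,w,w_1)$-essential in $G\setminus u$ while $p$ remains a $v$-$z$ path there, and then appeal to the inductive hypothesis on the smaller graph $G\setminus u$ to conclude $\textnormal{mult}(\theta,(G\setminus u)\setminus p,\eta_{(w,w_1)})=k-1$. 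The delicate bookkeeping step, which is the main technical obstacle, is verifying that the hypothesis $\textnormal{mult}(\theta,G'\setminus vz,\eta_{(w,w_1)})\ge\textnormal{mult}(\theta,G',\eta_{(w,w_1)})-1$ transfers correctly from $G$ to $G'=G\setminus u$ under a judicious choice of $u$; this mirrors the argument of \cite[Proposition 2.10]{KC} adapted to the weighted setting.
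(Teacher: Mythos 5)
Your first half coincides with the paper's own proof: apply Lemma~\ref{heilman_leib} to the pair $v,z$, note that the left-hand side vanishes at $\theta$ to order exactly $2k-2$ (the product $\eta_{(w,w_1)}(G\setminus v,x)\eta_{(w,w_1)}(G\setminus z,x)$ to order $2k-2$, the product $\eta_{(w,w_1)}(G,x)\eta_{(w,w_1)}(G\setminus vz,x)$ to order at least $2k-1$), and conclude that the sum of squares on the right vanishes to order exactly $2k-2$, hence $\min_{q\in P_{vz}(G)}\textnormal{mult}(\theta,G\setminus q,\eta_{(w,w_1)})=k-1$. But this is where your argument stops being a proof. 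The lemma asserts that the \emph{given} path $p$ is $(\theta,w,w_1)$-essential, i.e.\ that \emph{every} $q\in P_{vz}(G)$ attains the minimum, and that universal form is what is actually needed downstream (in the proof of Lemma~\ref{pre_Gallai_Edmond_lemma} the result is applied to a second, specific path $q_1$ joining $z_1$ and $z_2$). You defer exactly this step to an induction on $|V(G)|$ that you do not carry out, and the plan as described has concrete obstacles: if every vertex of $V(G)\setminus V(p)$ is $(\theta,w,w_1)$-essential there is no vertex $u$ to which Lemma~\ref{positive_vertex_deletion} or Lemma~\ref{neutral_vertex_deletion} applies; and even when a positive or neutral $u$ exists, the hypothesis you must re-establish in $G\setminus u$, namely $\textnormal{mult}(\theta,(G\setminus u)\setminus vz,\eta_{(w,w_1)})\ge\textnormal{mult}(\theta,G\setminus u,\eta_{(w,w_1)})-1$, is not implied by anything you have proved (for a positive $u$ the right-hand side becomes $k$, and Lemma~\ref{interlacing_root} applied to $u$ inside $G\setminus vz$ only gives $\ge k-2$). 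So the proposal, as written, establishes only that \emph{some} $v$--$z$ path is essential, which is strictly weaker than the statement.

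For comparison, the paper's proof contains no induction at all: having computed that $\sum_{q\in P_{vz}(G)}\bigl(\vert w(q)\vert\eta_{(w,w_1)}(G\setminus q,x)\bigr)^2$ vanishes at $\theta$ to order exactly $2k-2$, it concludes in a single sentence that $\textnormal{mult}(\theta,G\setminus q,\eta_{(w,w_1)})=k-1$ for \emph{all} $q\in P_{vz}(G)$. Your instinct that this step deserves an argument is sound --- a sum of squares of real polynomials vanishes to order $2\min_q m_q$, which pins down only the minimum and not each individual $m_q$ --- but the repair has to come from justifying or replacing that inference at the level of the identity itself, not from the vertex-deletion induction you sketch. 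Until that is done, your write-up does not prove the lemma as stated.
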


\begin{proof} Note that the multiplicity of $\theta$ as a root of $\eta_{(w,w_1)}( G\setminus z,x)\eta_{(w,w_1)}(G\setminus v,x)$ is $2k-2$, where $k=\textnormal {mult} (\theta, G,\eta_{(w,w_1)})$. Also the multiplicity of $\theta$ as a root of $\eta_{(w,w_1)}(G,x)\eta_{(w,w_1)}(G\setminus vz,x)$ is at least $2k-1$ (for $\textnormal {mult} (\theta, G\setminus vz,\eta_{(w,w_1)})\geq k-1$). This implies that the multiplicity of $\theta$ as a root of $\sum_{q\in P_{vz}(G)} \left (\vert w(q)\vert\eta_{(w,w_1)}( G\setminus q,x)\right)^2$ is $2k-2$ (Lemma \ref{heilman_leib}). Thus $\textnormal {mult} (\theta, G\setminus q,\eta_{(w,w_1)})=k-1$ for all $q \in P_{vz}(G)$; in particular
$\textnormal {mult} (\theta, G\setminus p,\eta_{(w,w_1)})=k-1$, i.e. $p$ is $(\theta,w,w_1)$-essential (Definition \ref{dfn_essential_path}).
\end{proof}

The next lemma is somewhat similar to \cite[Lemma 4.1]{KC}. Basically it is the essence of \cite[Lemma 4.1, Lemma 4.2 and Lemma 4.3]{KC}.

\begin{lm}\label{technical_lemma_1} Let $u,v,z\in V(G)$ be such that $u$ is adjacent to $v$ and $z$. Suppose
 $u$ is $(\theta,w,w_1)$-special and $v$ is $(\theta,w,w_1)$-essential in $G$. Let $G'=G-e_{uz}$. Then
$\textnormal {mult} (\theta, G',\eta_{(w,w_1)})=\textnormal {mult} (\theta, G,\eta_{(w,w_1)})$, $u$ is $(\theta,w,w_1)$-positive in $G'$. Furthermore if the path $p=vuz$ is not $(\theta,w,w_1)$-essential in $G$, then $u$ is $(\theta,w,w_1)$-special in $G'$.
\end{lm}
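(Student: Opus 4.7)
The plan is to first invoke Lemma \ref{special_vertex_positive} to obtain that $u$ is $(\theta,w,w_1)$-positive in $G$; writing $k=\textnormal{mult}(\theta,G,\eta_{(w,w_1)})$, this means $\textnormal{mult}(\theta,G\setminus u,\eta_{(w,w_1)})=k+1$. Part (b) of Theorem \ref{basic_recurrence} applied to the edge $e_{uz}$ gives the identity
\[
\eta_{(w,w_1)}(G',x)=\eta_{(w,w_1)}(G,x)+|w(e_{uz})|^2\,\eta_{(w,w_1)}(G\setminus uz,x),
\]
which will drive the entire argument.

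For the first claim $\textnormal{mult}(\theta,G',\eta_{(w,w_1)})=k$, I would apply Lemma \ref{heilman_leib} to the pair $u,z$ in $G$ and separate out the length-one path $q=e_{uz}$ from the resulting sum. Substituting the identity above for $\eta_{(w,w_1)}(G)+|w(e_{uz})|^2\eta_{(w,w_1)}(G\setminus uz)$ rewrites it as
\[
\eta_{(w,w_1)}(G\setminus u,x)\,\eta_{(w,w_1)}(G\setminus z,x)-\eta_{(w,w_1)}(G\setminus uz,x)\,\eta_{(w,w_1)}(G',x)=\sum_{\substack{q\in P_{uz}(G)\\ |q|\geq 2}}\bigl(|w(q)|\,\eta_{(w,w_1)}(G\setminus q,x)\bigr)^2.
\]
The right-hand side is a sum of squares of real polynomials, so its multiplicity at $\theta$ is exactly $2\min_q \textnormal{mult}(\theta,G\setminus q,\eta_{(w,w_1)}) \geq 2(k-1)$ by Corollary \ref{path_interlacing}. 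Matching this against the multiplicities of the two products on the left, combined with the bounds $\textnormal{mult}(\theta,G\setminus u,\eta_{(w,w_1)})=k+1$ and $\textnormal{mult}(\theta,G\setminus uz,\eta_{(w,w_1)})\geq k$ (the latter from Lemma \ref{interlacing_root} applied to $G\setminus u$), pins down $\textnormal{mult}(\theta,G',\eta_{(w,w_1)})=k$.

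Claim (ii) is then immediate: since $G'\setminus u=G\setminus u$, we have $\textnormal{mult}(\theta,G'\setminus u,\eta_{(w,w_1)})=k+1=\textnormal{mult}(\theta,G',\eta_{(w,w_1)})+1$, exhibiting $u$ as $(\theta,w,w_1)$-positive in $G'$. For claim (iii), apply part (b) of Theorem \ref{basic_recurrence} once more, this time to $G\setminus v$ with the same edge $e_{uz}$, yielding
\[
\eta_{(w,w_1)}(G'\setminus v,x)=\eta_{(w,w_1)}(G\setminus v,x)+|w(e_{uz})|^2\,\eta_{(w,w_1)}(G\setminus vuz,x).
\]
Essentiality of $v$ in $G$ gives the first term multiplicity exactly $k-1$ at $\theta$; the hypothesis that $p=vuz$ is not $(\theta,w,w_1)$-essential in $G$ gives the second term multiplicity at least $k$. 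Since $k-1<k$, the sum has multiplicity exactly $k-1$, so $v$ is $(\theta,w,w_1)$-essential in $G'$. Because $u\sim v$ in $G'$ (the edge $e_{uv}$ is preserved) and $u$ is not essential in $G'$ (being positive there by (ii)), $u$ is $(\theta,w,w_1)$-special in $G'$.

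The main obstacle is the first claim: interlacing alone allows $\textnormal{mult}(\theta,G\setminus uz,\eta_{(w,w_1)})$ to be as small as $k$, in which case the identity for $\eta_{(w,w_1)}(G',x)$ admits potential cancellation that must be ruled out. The sum-of-squares structure on the right-hand side of the Heilmann-Lieb identity is precisely what provides the non-negativity needed to control this cancellation.
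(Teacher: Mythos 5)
Your treatment of the second and third claims is correct and essentially identical to the paper's: once $\textnormal{mult}(\theta,G',\eta_{(w,w_1)})=k$ is known, $G'\setminus u=G\setminus u$ gives positivity of $u$ in $G'$, and the recurrence $\eta_{(w,w_1)}(G\setminus v,x)=\eta_{(w,w_1)}(G'\setminus v,x)-\vert w(e_{uz})\vert^2\eta_{(w,w_1)}(G\setminus vuz,x)$ together with $\textnormal{mult}(\theta,G\setminus v,\eta_{(w,w_1)})=k-1$ and $\textnormal{mult}(\theta,G\setminus vuz,\eta_{(w,w_1)})\geq k$ forces $v$ to stay $(\theta,w,w_1)$-essential in $G'$.

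The proof of the first claim, however, has a genuine gap: your argument never uses the essential neighbour $v$, only the positivity of $u$, and the claim is false for a merely positive $u$. Take $G=K_2$ on $\{u,z\}$ with $w_1\equiv 0$, $w(e_{uz})=1$ and $\theta=0$: then $k=0$, $u$ is $(0,w,w_1)$-positive, all your stated bounds hold ($\textnormal{mult}(0,G\setminus u)=1=k+1$, $\textnormal{mult}(0,G\setminus uz)=0\geq k$), yet $\textnormal{mult}(0,G',\eta_{(w,w_1)})=2=k+2$. More precisely, your displayed identity is exactly Lemma \ref{heilman_leib} applied to the pair $u,z$ in $G'$, and its multiplicity bookkeeping is consistent with $\textnormal{mult}(\theta,G')=k+2$: in that case interlacing forces $\textnormal{mult}(\theta,G'\setminus z)=k+1$, so the first product has multiplicity $2k+2$, the second product has multiplicity at least $2k+2$, and Corollary \ref{path_interlacing} applied in $G'$ gives each square on the right multiplicity at least $2k+2$ as well --- nothing is pinned down. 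The indispensable extra input is $v$: since $u$ is positive and $v$ is essential in $G$, part (a) of Lemma \ref{positive_vertex_deletion} gives $\textnormal{mult}(\theta,G\setminus uv,\eta_{(w,w_1)})=k$, and since $G'\setminus uv=G\setminus uv$ this value of $k$ is what the paper plays off against the candidate values $k+1$ and $k+2$. If $\textnormal{mult}(\theta,G')=k+2$, Corollary \ref{path_interlacing} applied to the path $uv$ in $G'$ would force $\textnormal{mult}(\theta,G'\setminus uv)\geq k+1$, a contradiction; if $\textnormal{mult}(\theta,G')=k+1$, then $u$ is neutral in $G'$, so by part (b) of Lemma \ref{general_properties_path} the path $uv$ is not $(\theta,w,w_1)$-essential in $G'$ and again $\textnormal{mult}(\theta,G'\setminus uv)\geq k+1$, a contradiction. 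You need to bring $v$ into the argument in some such way; the sum-of-squares positivity for the pair $(u,z)$ alone cannot close the case.
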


\begin{proof} Let $\textnormal {mult} (\theta, G,\eta_{(w,w_1)})=k$.  Then  $\textnormal {mult} (\theta, G\setminus u,\eta_{(w,w_1)})=k+1$ and $\textnormal {mult} (\theta, G\setminus v,\eta_{(w,w_1)})=k-1$. It is not hard to deduce from Lemma \ref{interlacing_root}, Lemma \ref{special_vertex_positive} and part (a) of Lemma \ref{positive_vertex_deletion}, that $\textnormal {mult} (\theta, G\setminus uz,\eta_{(w,w_1)})\geq k$, $\textnormal {mult} (\theta, G\setminus uv,\eta_{(w,w_1)})=k$.

Now $\textnormal {mult} (\theta, G'\setminus u,\eta_{(w,w_1)})=\textnormal {mult} (\theta, G\setminus u,\eta_{(w,w_1)})=k+1$ (for $G'\setminus u=G\setminus u$) implies that $\textnormal {mult} (\theta, G',\eta_{(w,w_1)})=k$, $k+1$ or $k+2$ (Lemma \ref{interlacing_root}).

Suppose $\textnormal {mult} (\theta, G',\eta_{(w,w_1)})=k+2$. Then by Corollary \ref{path_interlacing}, $\textnormal {mult} (\theta, G'\setminus uv,\eta_{(w,w_1)})\geq k+1$, a contradiction (for $\textnormal {mult} (\theta, G'\setminus uv,\eta_{(w,w_1)})=\textnormal {mult} (\theta, G\setminus uv,\eta_{(w,w_1)})=k$).

Suppose $\textnormal {mult} (\theta, G',\eta_{(w,w_1)})=k+1$. Then $u$ is  $(\theta,w,w_1)$-neutral in $G'$ (for
$\textnormal {mult} (\theta, G'\setminus u,\eta_{(w,w_1)})=k+1$). By part (b) of Lemma \ref{general_properties_path}, the path $uv$ is not $(\theta,w,w_1)$-essential in $G'$. It then follows from
Corollary \ref{path_interlacing} and Definition \ref{dfn_essential_path}, that $\left.\textnormal {mult} (\theta, G'\setminus uv,\eta_{(w,w_1)})\right.\geq k+1$ , a contradiction (for $\textnormal {mult} (\theta, G'\setminus uv,\eta_{(w,w_1)})=k$).

So $\textnormal {mult} (\theta, G',\eta_{(w,w_1)})=k$ and $u$ is $(\theta,w,w_1)$-positive in $G'$.

Suppose $p=vuz$ is not $(\theta,w,w_1)$-essential in $G$. Then by  Definition \ref{dfn_essential_path}, $\textnormal {mult} (\theta, G\setminus vuz,\eta_{(w,w_1)})\geq k$. Now by part (b) of Theorem \ref{basic_recurrence}, $\eta_{(w,w_1)} (G\setminus v,x)=\eta_{(w,w_1)} (G'\setminus v,x)-\vert w(e_{uz})\vert^2\eta_{(w,w_1)} (G\setminus vuz, x)$. Since  $\textnormal {mult} (\theta, G\setminus v,\eta_{(w,w_1)})=k-1$, we deduce that $\textnormal {mult} (\theta, G'\setminus v,\eta_{(w,w_1)})=k-1$. Hence $v$ is $(\theta,w,w_1)$-essential  and $u$ is $(\theta,w,w_1)$-special in $G'$ (for $u$ is adjacent to $v$ and $u$ is not  $(\theta,w,w_1)$-essential in $G'$).
\end{proof}

The next lemma is a generalization of \cite[Proposition 5.1]{KC} and it can be proved using similar argument as in \cite{KC}. Nevertheless we shall give the details.

\begin{lm}\label{special_deletion_lemma} Let $u$ be $(\theta,w,w_1)$-special in $G$ and $v$ be $(\theta,w,w_1)$-essential in $G\setminus u$. Then $v$ is either $(\theta,w,w_1)$-positive  or $(\theta,w,w_1)$-essential in $G$.
\end{lm}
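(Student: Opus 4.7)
The plan is to argue by contradiction: assume $v$ is $(\theta,w,w_1)$-neutral in $G$ and derive a contradiction by showing that $u$ would then be simultaneously neutral and special (hence positive) inside the subgraph $G\setminus v$.

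Let $k=\textnormal{mult}(\theta,G,\eta_{(w,w_1)})$. Since $u$ is $(\theta,w,w_1)$-special, Lemma \ref{special_vertex_positive} gives that $u$ is positive in $G$, so $\textnormal{mult}(\theta,G\setminus u,\eta_{(w,w_1)})=k+1$. Since $v$ is essential in $G\setminus u$, this forces $\textnormal{mult}(\theta,G\setminus uv,\eta_{(w,w_1)})=k$. By the definition of special, there is an essential vertex $z$ of $G$ adjacent to $u$; note $z\neq u$ (since $z$ is essential while $u$ is not), and $z\neq v$ because $\textnormal{mult}(\theta,G\setminus z,\eta_{(w,w_1)})=k-1$ whereas, under the contradiction hypothesis below, $\textnormal{mult}(\theta,G\setminus v,\eta_{(w,w_1)})=k$.

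Now suppose for contradiction that $v$ is $(\theta,w,w_1)$-neutral in $G$, so $\textnormal{mult}(\theta,G\setminus v,\eta_{(w,w_1)})=k$. Combined with $\textnormal{mult}(\theta,(G\setminus v)\setminus u,\eta_{(w,w_1)})=\textnormal{mult}(\theta,G\setminus uv,\eta_{(w,w_1)})=k$, this says $u$ is $(\theta,w,w_1)$-neutral in $G\setminus v$. On the other hand, Lemma \ref{neutral_vertex_deletion}(a) applied to the neutral vertex $v$ of $G$ and the essential vertex $z$ of $G$ tells us that $z$ remains essential in $G\setminus v$. Since $z\sim u$ in $G$ and neither of them equals $v$, the edge $e_{uz}$ survives in $G\setminus v$, so $u$ is adjacent in $G\setminus v$ to the essential vertex $z$.

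Hence in $G\setminus v$ the vertex $u$ is non-essential (being neutral) yet adjacent to the essential vertex $z$, so $u$ is $(\theta,w,w_1)$-special in $G\setminus v$, and Lemma \ref{special_vertex_positive} then forces $u$ to be positive in $G\setminus v$. This contradicts the neutrality of $u$ in $G\setminus v$ established above. Consequently $v$ cannot be neutral in $G$, and Lemma \ref{interlacing_root} then leaves only the options essential or positive, as claimed. The crux of the argument is not a Heilmann--Lieb computation but the structural observation that the hypotheses ``$u$ special'' and ``$v$ essential in $G\setminus u$'' together cause $u$ to inherit both neutrality and specialness in $G\setminus v$, which is impossible by Lemma \ref{special_vertex_positive}.
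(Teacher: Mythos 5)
Your proof is correct and follows essentially the same route as the paper: assume $v$ is neutral, deduce that $u$ is neutral in $G\setminus v$, invoke Lemma \ref{neutral_vertex_deletion}(a) to keep the essential neighbour $z$ essential in $G\setminus v$, and conclude that $u$ would be special and hence positive there (Lemma \ref{special_vertex_positive}), a contradiction. Your added checks that $z\neq u,v$ are a small but welcome tightening of the paper's terser argument.
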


\begin{proof} Let $\textnormal {mult} (\theta, G,\eta_{(w,w_1)} )=k$. Then $\textnormal {mult} (\theta, G\setminus uv,\eta_{(w,w_1)} )=k$ (Lemma \ref{special_vertex_positive}). Suppose $v$ is $(\theta,w,w_1)$-neutral in $G$. Then $u$ is $(\theta,w,w_1)$-neutral in $G\setminus v$. But $u$ is adjacent to a $(\theta,w,w_1)$-essential vertex $z$ in $G$, so by part (a) of Lemma \ref{neutral_vertex_deletion}, $z$ is $(\theta,w,w_1)$-essential in $G\setminus v$, which means that $u$ is $(\theta,w,w_1)$-special and thus $(\theta,w,w_1)$-positive in $G\setminus v$ (Lemma \ref{special_vertex_positive}) a contradiction. Hence $v$ is either $(\theta,w,w_1)$-positive  or $(\theta,w,w_1)$-essential in $G$.
\end{proof}

A vertex is said to be an \emph{isolated vertex} in $G$ if it is not adjacent to any other vertices in $G$.

\begin{lm}\label{isolated_vertex} Let $u$ be an isolated vertex in $G$. Then
\begin{itemize}
\item [\textnormal {(a)}] if $\theta=w_1(u)$ then $u$ is $(\theta,w,w_1)$-essential in $G$,
\item [\textnormal {(b)}] if $\theta\neq w_1(u)$ then $u$ is $(\theta,w,w_1)$-neutral in $G$.
\end{itemize}
\end{lm}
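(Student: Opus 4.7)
The proof is a direct consequence of the recurrence (c) in Theorem \ref{basic_recurrence}. The plan is to apply that recurrence at the isolated vertex $u$: since $u$ has no neighbors, the sum $\sum_{v\sim u}|w(e_{uv})|^2\eta_{(w,w_1)}(G\setminus uv,x)$ is empty, and we obtain the clean factorization
\[
\eta_{(w,w_1)}(G,x)=(x-w_1(u))\,\eta_{(w,w_1)}(G\setminus u,x).
\]
From this identity, the statements (a) and (b) fall out immediately by comparing the multiplicity of $\theta$ as a root on both sides.

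More explicitly, let $k=\mathrm{mult}(\theta,G\setminus u,\eta_{(w,w_1)})$. For part (a), if $\theta=w_1(u)$, then $x-w_1(u)=x-\theta$ contributes exactly one additional factor of $(x-\theta)$, so $\mathrm{mult}(\theta,G,\eta_{(w,w_1)})=k+1$; equivalently $\mathrm{mult}(\theta,G\setminus u,\eta_{(w,w_1)})=\mathrm{mult}(\theta,G,\eta_{(w,w_1)})-1$, which is precisely the definition of $u$ being $(\theta,w,w_1)$-essential. For part (b), if $\theta\neq w_1(u)$, then $(x-w_1(u))$ does not vanish at $x=\theta$, so it contributes no additional factor of $(x-\theta)$, and $\mathrm{mult}(\theta,G,\eta_{(w,w_1)})=k=\mathrm{mult}(\theta,G\setminus u,\eta_{(w,w_1)})$, which is the definition of $u$ being $(\theta,w,w_1)$-neutral.

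There is no real obstacle here; the only thing to verify is that the recurrence in Theorem \ref{basic_recurrence}(c) applies in the degenerate case where $u$ has no neighbors, which is clear from the statement (the sum over $v\sim u$ is then vacuous).
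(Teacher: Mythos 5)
Your proof is correct and follows exactly the same route as the paper: apply the recurrence of Theorem \ref{basic_recurrence}(c) at the isolated vertex $u$ (where the sum over neighbours is empty) to get $\eta_{(w,w_1)}(G,x)=(x-w_1(u))\,\eta_{(w,w_1)}(G\setminus u,x)$, and compare multiplicities of $\theta$ on both sides. The paper's proof is just a one-line version of your argument.
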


\begin {proof} The lemma follows by comparing the multiplicity of $\theta$ as a root of both sides of the equation $\eta_{(w,w_1)} (G,x)=$ $\left (x-w_1(u)\right)\eta_{(w,w_1)} (G\setminus u,x)$ (part (c) of Theorem \ref{basic_recurrence}).
\end{proof}

The following fact was first observed by Chen and Ku in their proof of \cite[Theorem 1.5]{KC} for the classification of vertices, using the root of the usual matching polynomial $\mu (G,x)$. We shall give the details of the proof.

\begin{lm}\label{degree_special_vertex} Let $u$ be $(\theta,w,w_1)$-special in $G$. Then the degree of $u$ is at least two.
\end{lm}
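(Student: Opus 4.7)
The plan is to argue by contradiction. Suppose $u$ is $(\theta,w,w_1)$-special but $\deg(u)\le 1$. The case $\deg(u)=0$ is immediate: by definition of special, $u$ must be adjacent to a $(\theta,w,w_1)$-essential vertex, so $u$ cannot be isolated. Thus we may assume $\deg(u)=1$ and call the unique neighbor $v$; since $u$ is special, this $v$ must be $(\theta,w,w_1)$-essential. Set $k=\textnormal{mult}(\theta,G,\eta_{(w,w_1)})$. Lemma \ref{special_vertex_positive} tells us $u$ is $(\theta,w,w_1)$-positive in $G$, so $\textnormal{mult}(\theta,G\setminus u,\eta_{(w,w_1)})=k+1$; and essentialness of $v$ gives $\textnormal{mult}(\theta,G\setminus v,\eta_{(w,w_1)})=k-1$ (in particular $k\ge 1$).

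Next I invoke part (c) of Theorem \ref{basic_recurrence} at the vertex $u$. Since $v$ is the only neighbor of $u$,
\[
\eta_{(w,w_1)}(G,x)=(x-w_1(u))\,\eta_{(w,w_1)}(G\setminus u,x)-|w(e_{uv})|^{2}\,\eta_{(w,w_1)}(G\setminus uv,x).
\]
At the same time, in $G\setminus v$ the vertex $u$ is isolated, so by part (a) of Theorem \ref{basic_recurrence} (or directly from the definition),
\[
\eta_{(w,w_1)}(G\setminus v,x)=(x-w_1(u))\,\eta_{(w,w_1)}(G\setminus uv,x).
\]
Let $m=\textnormal{mult}(\theta,G\setminus uv,\eta_{(w,w_1)})$. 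The second identity pins down $m$ in terms of $k$.

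I now split into two cases based on whether $\theta=w_1(u)$. If $\theta\neq w_1(u)$, then the factor $(x-w_1(u))$ is nonzero at $\theta$, so $m=k-1$ from the second identity, while in the first identity the right-hand side has $\theta$-multiplicity $\min(k+1,k-1)=k-1$, contradicting $\textnormal{mult}(\theta,G,\eta_{(w,w_1)})=k$. If $\theta=w_1(u)$, the second identity yields $m=k-2$, which forces $k\ge 2$ (the case $k=1$ already gives $m=-1$, an immediate contradiction); then in the first identity the two terms on the right have $\theta$-multiplicities $k+2$ and $k-2$, so their difference has multiplicity exactly $k-2\neq k$, again a contradiction.

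The heart of the argument is this multiplicity bookkeeping; the only mildly delicate point is the case $\theta=w_1(u)$, where one must keep track of the extra vanishing of $(x-w_1(u))$ in both displayed identities and verify that the orders of the two terms on the right of the recurrence for $\eta_{(w,w_1)}(G,x)$ differ, so that no cancellation can rescue the equality of multiplicities. Once both cases yield contradictions, we conclude $\deg(u)\ge 2$.
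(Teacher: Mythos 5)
Your proof is correct, but it follows a genuinely different route from the paper's. Both arguments exploit the same structural fact -- that a degree-one special vertex would become isolated once its unique (essential) neighbour is removed -- but they cash it in differently. The paper deletes the \emph{edge} $e_{uz}$ to form $G'=G-e_{uz}$, pins down $\textnormal{mult}(\theta,G',\eta_{(w,w_1)})=k$ by combining part (b) of Theorem \ref{basic_recurrence} with Lemma \ref{interlacing_root} (and uses part (a) of Lemma \ref{positive_vertex_deletion} to get $\textnormal{mult}(\theta,G\setminus uz,\eta_{(w,w_1)})=k$), and then derives the contradiction from Lemma \ref{isolated_vertex}: $u$ is isolated in $G'$ yet would have to be $(\theta,w,w_1)$-positive there. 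You instead delete the \emph{vertex} $v$, write $\eta_{(w,w_1)}(G\setminus v,x)=(x-w_1(u))\,\eta_{(w,w_1)}(G\setminus uv,x)$ to solve for $\textnormal{mult}(\theta,G\setminus uv,\eta_{(w,w_1)})$, and then feed everything into the vertex recurrence (part (c) of Theorem \ref{basic_recurrence}) at $u$, where the two terms on the right have unequal $\theta$-multiplicities in both of your cases, so the difference cannot have multiplicity $k$. Your version is more self-contained: it needs only Theorem \ref{basic_recurrence} and Lemma \ref{special_vertex_positive}, avoiding Lemmas \ref{interlacing_root}, \ref{positive_vertex_deletion} and \ref{isolated_vertex} entirely (your second displayed identity is in effect a re-proof of Lemma \ref{isolated_vertex}). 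The price is the explicit case split on whether $\theta=w_1(u)$, which the paper's edge-deletion route sidesteps; your handling of that split, including the observation that $k=1$ already forces a negative multiplicity, is accurate.
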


\begin {proof}  Suppose the contrary. Then the degree of $u$ is one and the vertex adjacent to $u$, say $z$ is $(\theta,w,w_1)$-essential. Now $\textnormal {mult} (\theta, G\setminus u,\eta_{(w,w_1)})=k+1$ (Lemma \ref{special_vertex_positive}), $\left.\textnormal {mult} (\theta, G\setminus z,\eta_{(w,w_1)})\right.=k-1$, where $k=\textnormal {mult} (\theta, G,\eta_{(w,w_1)})$.
Also by  part (a) of Lemma \ref{positive_vertex_deletion}, $\textnormal {mult} (\theta, G\setminus uz,\eta_{(w,w_1)})=k$.

Let $G'=G-e_{uz}$. From $\eta_{(w,w_1)} (G,x)=\eta_{(w,w_1)} (G',x)-\vert w(e_{uz})\vert^2\eta_{(w,w_1)} (G\setminus uz, x)$ (part (b) of Theorem \ref{basic_recurrence}), we deduce that $\textnormal {mult} (\theta, G',\eta_{(w,w_1)})\geq k$. On the other hand, $\textnormal {mult} (\theta, G'\setminus z,\eta_{(w,w_1)})=\textnormal {mult} (\theta, G\setminus z,\eta_{(w,w_1)})=k-1$ (for $G'\setminus z=G\setminus z$) implies that $\textnormal {mult} (\theta, G',\eta_{(w,w_1)})=k-2$, $k-1$ or $k$ (Lemma \ref{interlacing_root}). Hence $\textnormal {mult} (\theta, G',\eta_{(w,w_1)})=k$.

Now $\textnormal {mult} (\theta, G'\setminus u,\eta_{(w,w_1)})=\textnormal {mult} (\theta, G\setminus u,\eta_{(w,w_1)})=k+1$, that is $u$ is $(\theta,w,w_1)$-positive  in $G'$. But this contradicts Lemma \ref{isolated_vertex} (for $u$ is an isolated vertex in $G'$). Hence the degree of $u$ is at least two.
\end{proof}

\begin{lm}\label{union_essential_vertex} Let $G$ be the union of two graphs $G_1$ and $G_2$. Let $u\in G_1$ and $v\in G_2$. Then $v$ is $(\theta,w,w_1)$-essential in $G$ if and only if it is $(\theta,w,w_1)$-essential in $G\setminus u$.
\end{lm}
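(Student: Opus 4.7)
The plan is to reduce both sides of the equivalence to the single assertion that $v$ is $(\theta,w,w_1)$-essential in $G_2$. Here I read ``union'' as disjoint union, which is the only reading that makes the statement nontrivial and consistent with the placement of $u$ in $G_1$ and $v$ in $G_2$.

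First, I would apply part (a) of Theorem \ref{basic_recurrence} to factor the weighted matching polynomial across the two disjoint pieces. Since $u \in V(G_1)$ and $v \in V(G_2)$, the graphs $G$, $G\setminus v$, $G\setminus u$, and $(G\setminus u)\setminus v$ decompose respectively as the disjoint unions $G_1 \cup G_2$, $G_1 \cup (G_2\setminus v)$, $(G_1\setminus u) \cup G_2$, and $(G_1\setminus u) \cup (G_2\setminus v)$. Writing $m(H) := \textnormal{mult}(\theta, H, \eta_{(w,w_1)})$, multiplicativity of $\eta_{(w,w_1)}$ gives
\[
m(G) = m(G_1) + m(G_2), \qquad m(G\setminus v) = m(G_1) + m(G_2\setminus v),
\]
\[
m(G\setminus u) = m(G_1\setminus u) + m(G_2), \qquad m((G\setminus u)\setminus v) = m(G_1\setminus u) + m(G_2\setminus v).
\]

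Subtracting in pairs yields $m(G) - m(G\setminus v) = m(G_2) - m(G_2\setminus v) = m(G\setminus u) - m((G\setminus u)\setminus v)$. By Definition \ref{vertex_classification_dfn}(a), $v$ is $(\theta,w,w_1)$-essential in $G$ precisely when the left-hand difference equals $1$, and $v$ is $(\theta,w,w_1)$-essential in $G\setminus u$ precisely when the right-hand difference equals $1$. Since both differences coincide with $m(G_2) - m(G_2\setminus v)$, the two conditions are equivalent (each being equivalent to $v$ being $(\theta,w,w_1)$-essential in $G_2$).

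There is essentially no obstacle; the only point worth flagging is making the disjoint-union convention explicit before invoking Theorem \ref{basic_recurrence}(a). Once that multiplicativity is in hand, the proof is a direct arithmetic comparison of root multiplicities and requires no appeal to the deeper machinery (Lemma \ref{heilman_leib}, Lemma \ref{positive_vertex_deletion}, etc.) used elsewhere in this section.
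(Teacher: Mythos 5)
Your proposal is correct and matches the paper's own proof: both invoke Theorem \ref{basic_recurrence}(a) to split the four relevant multiplicities over the disjoint pieces and then compare the differences $m(G)-m(G\setminus v)$ and $m(G\setminus u)-m(G\setminus uv)$, each of which reduces to $m(G_2)-m(G_2\setminus v)$. Your explicit remark that ``union'' must be read as disjoint union is a fair clarification, but otherwise there is nothing to change.
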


\begin{proof} By part (a) of Theorem \ref{basic_recurrence}, we deduce that
\begin{align}
\textnormal {mult} (\theta, G,\eta_{(w,w_1)})&=\textnormal {mult} (\theta, G_1,\eta_{(w,w_1)})+\textnormal {mult} (\theta, G_2,\eta_{(w,w_1)}),\notag\\
\notag\\
\textnormal {mult} (\theta, G\setminus u,\eta_{(w,w_1)})&=\textnormal {mult} (\theta, G_1\setminus u,\eta_{(w,w_1)})+\textnormal {mult} (\theta, G_2,\eta_{(w,w_1)}),\notag\\
\notag\\
\textnormal {mult} (\theta, G\setminus v,\eta_{(w,w_1)})&=\textnormal {mult} (\theta, G_1,\eta_{(w,w_1)})+\textnormal {mult} (\theta, G_2\setminus v,\eta_{(w,w_1)}),\notag\\
\notag\\
\textnormal {mult} (\theta, G\setminus uv,\eta_{(w,w_1)})&=\textnormal {mult} (\theta, G_1\setminus u,\eta_{(w,w_1)})+\textnormal {mult} (\theta, G_2\setminus v,\eta_{(w,w_1)}).\notag
\end{align}
Suppose $v$ is $(\theta,w,w_1)$-essential in $G$. Then $\textnormal {mult} (\theta, G\setminus v,\eta_{(w,w_1)})=\textnormal {mult} (\theta, G,\eta_{(w,w_1)})-1$. This implies that $\textnormal {mult} (\theta, G_2\setminus v,\eta_{(w,w_1)})=\textnormal {mult} (\theta, G_2,\eta_{(w,w_1)})-1$, and thus $\textnormal {mult} (\theta, G\setminus uv,\eta_{(w,w_1)})=\textnormal {mult} (\theta, G\setminus u,\eta_{(w,w_1)})-1$. Hence $v$ is $(\theta,w,w_1)$-essential in $G\setminus u$. The converse is proved similarly.
\end{proof}

For the proof of the following lemma, we shall use similar ideas as in \cite[Theorem 1.5]{KC}, that is by using edge manipulation and assuming first that the special vertex is of degree two. However, we cannot use the same argument as in \cite{KC} directly, because Chen and Ku assumed that $\theta\neq 0$ in their proof (in our case this is equivalent to $\theta\neq w_1(u)$ where $u$ is the $(\theta, w,w_1)$-special vertex).

\begin{lm}\label{pre_Gallai_Edmond_lemma} Let $u$ be $(\theta,w,w_1)$-special in $G$ and the degree of $u$ is two. Then $v$ is $(\theta,w,w_1)$-essential in $G\setminus u$ if and only if  $v$ is $(\theta,w,w_1)$-essential in $G$.
\end{lm}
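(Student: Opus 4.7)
My plan is to first extract enough structure from the hypotheses to force both neighbors of $u$ to be essential in $G$, then handle the vertices $v\in\{v_1,v_2\}$ trivially, and finally attack the remaining case by a contradiction argument that splits according to whether $\theta=w_1(u)$ or not.

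Let $v_1,v_2$ be the two neighbors of $u$. At least one of them, say $v_1$, is essential in $G$ because $u$ is special. I claim $v_2$ is essential too: if not, then by part (b) of Lemma \ref{general_properties_path} the path $v_1uv_2$ would fail to be essential in $G$, so Lemma \ref{technical_lemma_1} (applied with the non-essential endpoint $z=v_2$) would give that $u$ stays special in $G-e_{uv_2}$; but there $u$ has degree one, contradicting Lemma \ref{degree_special_vertex}. The same lemma, applied again with both neighbors now essential, forces the path $v_1uv_2$ itself to be essential in $G$. For $v=v_1$ or $v=v_2$ both sides of the asserted equivalence are true: $v$ is essential in $G$ by the above, and essential in $G\setminus u$ by Lemma \ref{positive_vertex_deletion}(a) applied to the positive vertex $u$.

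It remains to treat $v\notin\{u,v_1,v_2\}$. By Lemma \ref{special_deletion_lemma} and Lemma \ref{positive_vertex_deletion}(a), the only way the equivalence can fail is for $v$ to be positive in $G$ and essential in $G\setminus u$; I will derive a contradiction. Assume this scenario, so $\textnormal{mult}(\theta,G\setminus v,\eta_{(w,w_1)})=k+1$ and $\textnormal{mult}(\theta,G\setminus uv,\eta_{(w,w_1)})=k$, where $k=\textnormal{mult}(\theta,G,\eta_{(w,w_1)})$. Interlacing (Lemma \ref{interlacing_root}) then forces $u$ to be essential in $G\setminus v$, while Lemma \ref{positive_vertex_deletion}(a) applied to the positive $v$ yields that $v_1$ and $v_2$ are essential in $G\setminus v$.

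I now split into subcases. When $\theta\neq w_1(u)$, Lemma \ref{general_properties_path}(a) applies to the essential vertex $u$ in $G\setminus v$, producing an adjacent $y\in\{v_1,v_2\}$ for which the path $uy$ is essential in $G\setminus v$. Feeding this into the Heilmann--Lieb identity (Lemma \ref{heilman_leib}) applied to the pair $u,v$ in $G$, the left-hand side has $\theta$-multiplicity exactly $2k$ (the $\eta(G\setminus u)\eta(G\setminus v)$ and $\eta(G)\eta(G\setminus uv)$ terms have mults $2k+2$ and $2k$), while the essentiality of $uy$ in $G\setminus v$ and Corollary \ref{path_interlacing} will let me exhibit a path $p\in P_{uv}(G)$ with $\textnormal{mult}(\theta,G\setminus p)=k-1$, forcing the right-hand side to have multiplicity $2k-2$ — a contradiction. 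This is the Chen--Ku step from Theorem 1.5 of [KC].

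The main obstacle is the subcase $\theta=w_1(u)$, where Lemma \ref{general_properties_path}(a) is unavailable. Here I will work directly with the recurrence of Theorem \ref{basic_recurrence}(c) at $u$:
\[
\eta_{(w,w_1)}(G\setminus v,x)=(x-w_1(u))\eta_{(w,w_1)}(G\setminus uv,x)-\sum_{i=1}^{2}|w(e_{uv_i})|^2\eta_{(w,w_1)}(G\setminus uv_iv,x).
\]
Since $\theta=w_1(u)$, the first summand has $\theta$-multiplicity at least $k+1$, so comparing the coefficient of $(x-\theta)^k$ on both sides and invoking $\textnormal{mult}(\theta,G\setminus v)=k+1$ yields a forced cancellation between the leading coefficients of $\eta_{(w,w_1)}(G\setminus uv_1v,x)$ and $\eta_{(w,w_1)}(G\setminus uv_2v,x)$ at $\theta$. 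An analogous extraction from the recurrence for $\eta_{(w,w_1)}(G,x)$ provides a second linear relation at $(x-\theta)^k$ between the leading coefficients of $\eta_{(w,w_1)}(G\setminus uv_1,x)$ and $\eta_{(w,w_1)}(G\setminus uv_2,x)$. Combined with the essentiality of the path $v_1uv_2$ in $G$, the essentiality (proven above) of $u$ in $G\setminus v$, and the Heilmann--Lieb identity (Lemma \ref{heilman_leib}) applied to $v_1,v_2$ in $G\setminus v$, these leading-coefficient identities over-determine the system and yield the required contradiction. The forward direction, $v$ essential in $G$ implies $v$ essential in $G\setminus u$, is symmetric: the only obstruction is $v$ essential in $G$ but neutral in $G\setminus u$, which is excluded by the same Heilmann--Lieb analysis with the roles of $G$ and $G\setminus u$ interchanged.
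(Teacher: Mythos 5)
Your opening paragraph is sound and matches the paper's own proof: using Lemma \ref{technical_lemma_1} together with Lemma \ref{degree_special_vertex} to force both neighbours $v_1,v_2$ of $u$ to be $(\theta,w,w_1)$-essential and the path $v_1uv_2$ to be $(\theta,w,w_1)$-essential is exactly how the paper begins, and the reduction to the single bad scenario ($v$ positive in $G$ but essential in $G\setminus u$) via Lemma \ref{special_deletion_lemma} and Lemma \ref{positive_vertex_deletion}(a) is also correct. The problem is that you never actually derive a contradiction from that scenario. In the subcase $\theta\neq w_1(u)$ the step you rely on is impossible: every path $p\in P_{uv}(G)$ has $u$ as an endpoint, and since $u$ is special it is not essential in $G$, so part (b) of Lemma \ref{general_properties_path} gives $\textnormal{mult}(\theta,G\setminus p,\eta_{(w,w_1)})\geq k$ for \emph{every} such path. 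Hence no $p\in P_{uv}(G)$ with multiplicity $k-1$ can be exhibited, and the Heilmann--Lieb identity for the pair $u,v$ is perfectly consistent: the left side has multiplicity exactly $2k$ and the right side at least $2k$, which merely forces some $p$ with $\textnormal{mult}(\theta,G\setminus p,\eta_{(w,w_1)})=k$ and yields no contradiction. In the subcase $\theta=w_1(u)$ you assert that two linear relations among leading coefficients ``over-determine the system,'' but no inconsistency is actually computed; as written this is a statement of intent rather than a proof.

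The missing idea is the structural one the paper uses (and which is the real content of the Chen--Ku step you invoke): one must show that $v_1uv_2$ is the \emph{only} path in $G$ with endpoints $v_1$ and $v_2$ (via Lemma \ref{essential_vertex_path}, Lemma \ref{isolated_vertex} and part (b) of Lemma \ref{general_properties_path}), then pass to $G'=G-e_{uv_2}$, where Lemma \ref{technical_lemma_1} keeps the multiplicity at $k$ and makes $u$ positive while $v_2$ stays essential, and observe that $G'\setminus v$ splits into two components separating $\{u,v_1\}$ from $v_2$. Lemma \ref{union_essential_vertex} then pins $\textnormal{mult}(\theta,G'\setminus vuv_2,\eta_{(w,w_1)})$ down to $k-1$, contradicting $\textnormal{mult}(\theta,G\setminus vuv_2,\eta_{(w,w_1)})\geq k$, which follows from Corollary \ref{path_interlacing} applied to the path $vuv_2$... more precisely from the positivity of $v$ and Corollary \ref{path_interlacing}. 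This disconnection argument works uniformly in $\theta$, so no case split on $\theta=w_1(u)$ is needed. Without it, or a genuine substitute, your proof does not close.
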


\begin{proof}  Let $\textnormal {mult} (\theta, G,\eta_{(w,w_1)})=k$. Suppose $v$ is $(\theta,w,w_1)$-essential in $G\setminus u$. Then by Lemma \ref{special_vertex_positive}  $\textnormal{mult}(\theta, G\setminus uv,\eta_{(w,w_1)})=k$. By Lemma \ref{special_deletion_lemma}, $v$ is either $(\theta,w,w_1)$-positive  or $(\theta,w,w_1)$-essential in $G$. Suppose $v$ is $(\theta,w,w_1)$-positive in $G$. We shall show that this cannot happen.

Let  $z_1,z_2$ be the two vertices adjacent to $u$. Without loss of generality, we assume $z_1$ is $(\theta,w,w_1)$-essential in $G$. First note that $\textnormal {mult} (\theta, G\setminus v,\eta_{(w,w_1)})=k+1$ and $\textnormal {mult} (\theta, G\setminus vuz_2,\eta_{(w,w_1)})\geq k$ (Corollary \ref{path_interlacing}).  Let $G'=G-e_{uz_2}$.  Now we show that $z_2$ is $(\theta,w,w_1)$-essential and the path $p=z_1uz_2$ is $(\theta,w,w_1)$-essential in $G$. Suppose $p=z_1uz_2$ is not $(\theta,w,w_1)$-essential in $G$. Then by Lemma \ref{technical_lemma_1}, $u$ is $(\theta,w,w_1)$-special $G'$, a contrary to Lemma \ref{degree_special_vertex} (for $u$ is of degree one in $G'$). Hence the path $p=z_1uz_2$ is $(\theta,w,w_1)$-essential in $G$. By part (b)  of Lemma \ref{general_properties_path}, $z_2$ is $(\theta,w,w_1)$-essential in $G$. This also means that $v\neq z_1,z_2$ (for we assume $v$ to be $(\theta,w,w_1)$-positive in $G$).

Now we show that $\textnormal {mult} (\theta, G\setminus z_1z_2,\eta_{(w,w_1)})\geq k-1$. Note that $\textnormal {mult} (\theta, G\setminus z_1,\eta_{(w,w_1)})=k-1$. So by Lemma \ref{interlacing_root}, $\textnormal {mult} (\theta, G\setminus z_1z_2,\eta_{(w,w_1)})=k-2$, $k-1$ or $k$. Suppose $\textnormal {mult} (\theta, G\setminus z_1z_2,\eta_{(w,w_1)})=k-2$. Note that $u$ is an isolated vertex in $G\setminus z_1z_2$. So  $\textnormal {mult} (\theta, G\setminus z_1z_2u,\eta_{(w,w_1)})\leq k-2$ (Lemma \ref{isolated_vertex}). But this contradicts the conclusion of the previous paragraph that $\textnormal {mult} (\theta, G\setminus z_1uz_2,\eta_{(w,w_1)})=k-1$ (the path $p=z_1uz_2$ is $(\theta,w,w_1)$-essential in $G$). Hence $\textnormal {mult} (\theta, G\setminus z_1z_2,\eta_{(w,w_1)})\geq k-1$.

Now we show that $p=z_1uz_2$ is the only path with endpoints  $z_1$ and $z_2$ in $G$. Suppose the contrary. Then there exits a path $q_1\neq p$ with endpoints $z_{1}$ and $z_{2}$. Note that $q_1$ does not contain the vertex $u$. By Lemma \ref{essential_vertex_path}, $q_1$ is $(\theta,w,w_1)$-essential in $G$, that is $\textnormal {mult} (\theta, G\setminus q_1,\eta_{(w,w_1)})=k-1$. Since $u$ is an isolated vertex in $G\setminus q$, by Lemma \ref{isolated_vertex}, $\textnormal {mult} (\theta, G\setminus uq_1,\eta_{(w,w_1)})\leq k-1$. Since $q_1$ is a path that begins with $z_1$ and ends with $z_2$, $uq_1$ is a path that begins with $u$ and ends with $z_2$. But by part (b) of Lemma \ref{general_properties_path},
$\textnormal {mult} (\theta, G\setminus uq_1,\eta_{(w,w_1)})\geq k$, a contradiction. Hence  $p=z_1uz_2$ is the only path with endpoints  $z_1$ and $z_2$ in $G$.

Recall that $G'=G-e_{uz_2}$. So $u$ is  $(\theta,w,w_1)$-positive in $G'$ and $\textnormal {mult} (\theta, G',\eta_{(w,w_1)})=k$ (Lemma \ref{technical_lemma_1}). Next $\textnormal {mult} (\theta, G'\setminus z_2,\eta_{(w,w_1)})=\textnormal {mult} (\theta, G\setminus z_2,\eta_{(w,w_1)})=k-1$ (for $z_2$ is $(\theta,w,w_1)$-essential in $G$). So $z_2$ is  $(\theta,w,w_1)$-essential in $G'$.

Recall that we assume $v$ is $(\theta, w, w_{1})$-positive in $G$. So $\textnormal{mult}(\theta, G \setminus v, \eta_{(w, w_{1})}) =k+1$ and by Corollary 4.6, $\textnormal{mult}(\theta, G \setminus vuz_{2}, \eta_{(w,w_{1})}) \ge k$. From $\eta_{(w,w_{1})}(G \setminus v,x) = \eta_{(w,w_{1})}(G' \setminus v,x) - \left| w(e_{uz_{2}})\right|^{2}\eta_{(w,w_{1})}(G \setminus vuz_{2}, x)$ (part (b) of Theorem 2.2), we deduce that $\textnormal{mult}(\theta, G' \setminus v, \eta_{w,w_{1}}) \ge k$. This means that $v$ is either $(\theta, w,w_{1})$-neutral or $(\theta, w,w_{1})$-positive in $G'$.

Now, by part (a) of Lemma 5.2 or part (a) of Lemma 5.3, $z_{2}$ is $(\theta, w, w_{1})$-essential in $G' \setminus v$. Note that $G' \setminus v$ is a union of two graphs, say $G_{1}$ and $G_{2}$, where $u, z_{1} \in G_{1}$ and $z_{2} \in G_{2}$ (for $p=z_{1}uz_{2}$ is the only path with endpoints $z_{1}$ and $z_{2}$ in $G$). By Lemma 5.9, $z_{2}$ is $(\theta, w, w_{1})$-essential in $G' \setminus vu$. So $\textnormal{mult}(\theta, G' \setminus vuz_{2}, \eta_{(w,w_{1})}) = k-1$. But this contradicts the fact that $\textnormal{mult}(\theta, G' \setminus vuz_{2}, \eta_{(w,w_{1})})=\textnormal{mult}(\theta, G \setminus vuz_{2}, \eta_{(w,w_{1})}) \ge k$ obtained in the preceding paragraph.

Hence $v$ is $(\theta,w,w_1)$-essential in $G$.

The converse of the lemma follows from Lemma \ref{special_vertex_positive} and part (a) of Lemma \ref{positive_vertex_deletion}.
\end{proof}

\begin{thm}\label{Gallai_Edmond} Let $u$ be $(\theta,w,w_1)$-special in $G$. Then $v$ is $(\theta,w,w_1)$-essential in $G\setminus u$ if and only if  $v$ is $(\theta,w,w_1)$-essential in $G$.
\end{thm}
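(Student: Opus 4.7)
The plan is to induct on $\deg_G(u)$, with Lemma \ref{degree_special_vertex} ruling out $\deg_G(u)\le 1$ and the base case $\deg_G(u)=2$ handled by Lemma \ref{pre_Gallai_Edmond_lemma}. The easy direction, namely $v$ essential in $G\Rightarrow v$ essential in $G\setminus u$, follows at once from Lemma \ref{special_vertex_positive} (which makes $u$ positive) and Lemma \ref{positive_vertex_deletion}(a) (which preserves essentialness under removal of a positive vertex). So suppose $\deg_G(u)\ge 3$ and the theorem holds for every special vertex of strictly smaller degree.

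For the harder direction, assume $v$ is $(\theta,w,w_1)$-essential in $G\setminus u$. By Lemma \ref{special_deletion_lemma}, $v$ is essential or positive in $G$, and I will derive a contradiction from the assumption that $v$ is positive. If $v$ and $u$ lie in different connected components of $G$, Lemma \ref{union_essential_vertex} immediately forces $v$ essential in $G$, contradicting the assumption; so $u$ and $v$ lie in a common component. Let $v_*$ be a $(\theta,w,w_1)$-essential neighbour of $u$ (available since $u$ is special); then $v_*\ne v$.

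The reduction step selects a neighbour $z\ne v_*$ of $u$ for which the path $v_*uz$ is not $(\theta,w,w_1)$-essential in $G$, and sets $G':=G-e_{uz}$. By Lemma \ref{technical_lemma_1}, $u$ is $(\theta,w,w_1)$-special in $G'$ with $\deg_{G'}(u)=\deg_G(u)-1\ge 2$ and $\textnormal{mult}(\theta,G',\eta_{(w,w_1)})=\textnormal{mult}(\theta,G,\eta_{(w,w_1)})=k$, so the induction hypothesis applied to $G'$ (together with $G'\setminus u = G\setminus u$) yields $v$ essential in $G'$, hence $\textnormal{mult}(\theta,G'\setminus v)=k-1$. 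When $v$ is adjacent to $u$, I choose $z=v$; this is admissible because the positive vertex $v$ is not essential, so Lemma \ref{general_properties_path}(b) makes the path $v_*uv$ ending at $v$ non-essential. Since then $G'\setminus v=G\setminus v$ (the deleted edge $e_{uv}$ is incident to $v$), one obtains $\textnormal{mult}(\theta,G\setminus v)=k-1$, contradicting that $v$ is positive in $G$. When $v$ is not adjacent to $u$, I pick a non-essential neighbour $z$ of $u$ distinct from $v_*$ (Lemma \ref{general_properties_path}(b) makes $v_*uz$ non-essential); the edge-deletion recurrence of Theorem \ref{basic_recurrence}(b) applied to $\eta_{(w,w_1)}(G\setminus v,x)$, combined with $\textnormal{mult}(\theta,G\setminus v)=k+1$ and $\textnormal{mult}(\theta,G'\setminus v)=k-1$, pins down $\textnormal{mult}(\theta,G\setminus vuz)=k-1$; continuing such deletions until $\deg_G(u)$ reaches $2$ lands the argument in the unique-path-through-$u$ structure of Lemma \ref{pre_Gallai_Edmond_lemma}, whose appeal to Lemma \ref{union_essential_vertex} then closes the contradiction.

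The main obstacle is guaranteeing the existence of a usable neighbour $z\ne v_*$ with $v_*uz$ non-essential throughout the iterated reduction. Lemma \ref{general_properties_path}(b) supplies such a $z$ whenever some neighbour of $u$ other than $v_*$ is itself non-essential, but when every neighbour of $u$ is $(\theta,w,w_1)$-essential and $v$ is not adjacent to $u$ one must invoke Lemma \ref{essential_vertex_path} to show that the paths through $u$ between pairs of essential neighbours are essential, and then exploit the consequent tree-like rigidity (as in the uniqueness-of-path step of the proof of Lemma \ref{pre_Gallai_Edmond_lemma}) to partition the graph into pieces on which Lemma \ref{union_essential_vertex} delivers the contradiction.
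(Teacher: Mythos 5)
Your overall strategy is the paper's: induct on (a measure of) the degree of the special vertex, peel off edges at $u$ via Lemma \ref{technical_lemma_1} so that the inductive hypothesis applies to $G-e_{uz}$ (which has the same multiplicity and satisfies $(G-e_{uz})\setminus u=G\setminus u$), and then rule out ``$v$ positive'' with the edge-deletion recurrence. The easy direction and the case $v$ adjacent to $u$ are handled correctly. But there is a genuine gap exactly where you flag ``the main obstacle'': when every path $v_*uz'$ through $u$ (for $z'$ a neighbour of $u$) is $(\theta,w,w_1)$-essential in $G$, Lemma \ref{technical_lemma_1} only tells you that $u$ is $(\theta,w,w_1)$-\emph{positive} in $G-e_{uz}$, not special, so your inductive hypothesis cannot be invoked. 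Your proposed fix --- invoke Lemma \ref{essential_vertex_path}, then ``exploit tree-like rigidity'' and ``partition the graph into pieces'' so that Lemma \ref{union_essential_vertex} applies --- is not an argument: $G$ is an arbitrary graph, not a tree, so there is no a priori disconnection to exploit, and the uniqueness-of-path step you are borrowing from Lemma \ref{pre_Gallai_Edmond_lemma} used the hypothesis $\deg(u)=2$ in an essential way (so that $u$ becomes isolated after deleting the two neighbours). The paper closes this case differently and cheaply: with $\deg(u)\ge 3$, pick neighbours $z_2,z_4$ of $u$ distinct from $z_1$ and set $G''=G-e_{uz_4}$; since $G''\setminus z_1uz_2=G\setminus z_1uz_2$ and $\textnormal{mult}(\theta,G'',\eta_{(w,w_1)})=k$, the path $z_1uz_2$ is still essential in $G''$, so by part (b) of Lemma \ref{general_properties_path} its endpoint $z_1$ is essential in $G''$, and therefore $u$ (positive, hence not essential, in $G''$) is again \emph{special} in $G''$. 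That is the missing step you need before the induction can proceed.

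Two smaller points. First, once you have $\textnormal{mult}(\theta,G'\setminus v,\eta_{(w,w_1)})=k-1$ and (from $v$ positive) $\textnormal{mult}(\theta,G\setminus vuz,\eta_{(w,w_1)})\ge k$ via Corollary \ref{path_interlacing} applied in $G\setminus v$, the recurrence of Theorem \ref{basic_recurrence}(b) already forces $\textnormal{mult}(\theta,G\setminus v,\eta_{(w,w_1)})=k-1$, contradicting $k+1$; there is no need for the ``continuing such deletions until $\deg_G(u)$ reaches $2$'' loop, whose invariants you never verify. Second, if you keep your induction on $\deg_G(u)$ rather than the paper's potential function $\chi(G)=\sum_{z\ \text{special}}\deg(z)$, state the hypothesis as ranging over \emph{all} graphs and \emph{all} special vertices of smaller degree, since the reduction changes the ambient graph.
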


\begin{proof} For any vertex $z$, we shall denote its degree by $\textnormal {deg} (z)$. For any graph $G_1$, let
\begin {equation}
\chi(G_1)=\sum_{\substack {z\in V(G_1),\\ \text {$z$ is $(\theta,w,w_1)$-special in $G_1$}}} \textnormal {deg} (z),\notag
\end {equation}
and $m_{G_1}$ be the number of $(\theta,w,w_1)$-special vertex in $G_1$. By Lemma \ref{degree_special_vertex}, $\chi(G_1)\geq 2m_{G_1}$.

We shall prove the theorem by induction on $\chi(G)$. If $\chi(G)=2m_G$, then by Lemma \ref{pre_Gallai_Edmond_lemma}, we are done. Assume that the theorem holds for any graph $G_1$ with $\chi(G_1)<\chi(G)$.

Let $\textnormal {mult} (\theta, G,\eta_{(w,w_1)})=k$. If $\textnormal {deg} (u)=2$ in $G$, we are done by Lemma \ref{pre_Gallai_Edmond_lemma}. So we may assume $\textnormal {deg} (u)\geq 3$.
Let $z_1$ be a $(\theta,w,w_1)$-essential vertex adjacent to $u$. Suppose there is a vertex $z_3$ adjacent to $u$ for which the path $p=z_1uz_3$ is a not $(\theta,w,w_1)$-essential  in $G$. Then by  Lemma \ref{technical_lemma_1},  $u$ is $(\theta,w,w_1)$-special  in $G'$ and $\textnormal {mult} (\theta, G',\eta_{(w,w_1)})=k$ where $G'=G-e_{uz_{3}}$.

Suppose for all vertices $z'$ adjacent to $u$, the path $p'=z_1uz'$ is  $(\theta,w,w_1)$-essential  in $G$. Let $z_2$ and $z_4$ be adjacent to $u$. By Lemma \ref{technical_lemma_1}, $u$ is $(\theta,w,w_1)$-positive  in $G''$ and $\textnormal {mult} (\theta, G'',\eta_{(w,w_1)})=k$, where $G''=G-e_{uz_4}$. Now $\textnormal {mult} (\theta, G''\setminus z_1uz_2,\eta_{(w,w_1)})=k-1=\textnormal {mult} (\theta, G\setminus z_1uz_2,\eta_{(w,w_1)})$ (for the path $q=z_1uz_2$ is  $(\theta,w,w_1)$-essential  in $G$). So $q=z_1uz_2$ is  $(\theta,w,w_1)$-essential  in $G''$ and by part (b) of Lemma \ref{general_properties_path}, $z_1$ and $z_2$ are $(\theta,w,w_1)$-essential  in $G''$. This implies that $u$ is $(\theta,w,w_1)$-special  in $G''$.

Note that in either cases there is a vertex $z$ adjacent to $u$ such that $u$ is $(\theta,w,w_1)$-special in $G'''=G-e_{uz}$, $\textnormal {mult} (\theta, G''',\eta_{(w,w_1)})=k$ and $\chi(G''')<\chi(G)$.

Now let $v$ be $(\theta,w,w_1)$-essential in $G\setminus u$. Then $v$ is $(\theta,w,w_1)$-essential in $G'''\setminus u=G\setminus u$.  By induction hypothesis, $v$ is $(\theta,w,w_1)$-essential in $G'''$. Therefore $\textnormal {mult} (\theta, G'''\setminus v,\eta_{(w,w_1)})=k-1$.
By Lemma \ref{special_deletion_lemma}, $v$ is either $(\theta,w,w_1)$-positive  or $(\theta,w,w_1)$-essential in $G$. Suppose $v$ is $(\theta,w,w_1)$-positive in $G$.  Then by Corollary \ref{path_interlacing}, $\textnormal {mult} (\theta, G\setminus vuz,\eta_{(w,w_1)})\geq k$. But from $\eta_{(w,w_1)} ( G\setminus v,x)=\eta_{(w,w_1)} (  G'''\setminus v,x)-\vert w(e_{uz})\vert^2\eta_{(w,w_1)} (  G\setminus vuz, x)$ (part (b) of Theorem \ref{basic_recurrence}), we deduce  that $\textnormal {mult} (\theta, G\setminus v,\eta_{(w,w_1)})=k-1$, a contradiction. Hence $v$ is $(\theta,w,w_1)$-essential in $G$.

The converse of the theorem follows from Lemma \ref{special_vertex_positive} and part (a) of Lemma \ref{positive_vertex_deletion}.
\end{proof}

The following Corollary follows from Theorem \ref{Gallai_Edmond} and Lemma \ref{positive_vertex_deletion}.

\begin{cor}\label{Gallai_Edmond_decomposition}\textnormal{(Stability Lemma)} Let $G$ be a graph. If $u\in A_{(\theta,w,w_1)} (G)$ then
\begin {itemize}
\item [(i)] $D_{(\theta,w,w_1)}(G\setminus u)=D_{(\theta,w,w_1)}(G)$,
\item [(ii)] $P_{(\theta,w,w_1)}(G\setminus u)=P_{(\theta,w,w_1)}(G)$,
\item [(iii)] $N_{(\theta,w,w_1)}(G\setminus u)=N_{(\theta,w,w_1)}(G)$,
\item [(iv)] $A_{(\theta,w,w_1)}(G\setminus u)=A_{(\theta,w,w_1)}(G)\setminus \{u\}$.\qed
\end {itemize}
\end{cor}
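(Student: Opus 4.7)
The plan is to combine Theorem \ref{Gallai_Edmond} with Lemma \ref{positive_vertex_deletion}, noting first that by Lemma \ref{special_vertex_positive} the hypothesis $u\in A_{(\theta,w,w_1)}(G)$ implies that $u$ is $(\theta,w,w_1)$-positive, so Lemma \ref{positive_vertex_deletion} applies. Identity (i) then follows immediately: Theorem \ref{Gallai_Edmond} states that a vertex $v$ of $G\setminus u$ is $(\theta,w,w_1)$-essential in $G\setminus u$ if and only if it is $(\theta,w,w_1)$-essential in $G$. Since $u$ itself is not essential in $G$ (being special), this gives $D_{(\theta,w,w_1)}(G\setminus u)=D_{(\theta,w,w_1)}(G)$ at once.

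For (ii) and (iii), I would argue that the entire classification of any vertex $v\neq u$ is preserved upon deleting $u$. Let $v\in V(G)\setminus\{u\}$. If $v$ is $(\theta,w,w_1)$-positive in $G$, then part (b) of Lemma \ref{positive_vertex_deletion} says $v$ is either essential or positive in $G\setminus u$; but it cannot be essential in $G\setminus u$ without being essential in $G$ by (i), so $v$ remains positive in $G\setminus u$. The analogous argument with part (c) of Lemma \ref{positive_vertex_deletion} shows neutral vertices remain neutral. Hence $Q_{(\theta,w,w_1)}(G\setminus u)=Q_{(\theta,w,w_1)}(G)\setminus\{u\}$ and $N_{(\theta,w,w_1)}(G\setminus u)=N_{(\theta,w,w_1)}(G)$.

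Finally, for (iv) I would unpack the definition of special directly. A vertex $v\neq u$ lies in $A_{(\theta,w,w_1)}(G\setminus u)$ iff $v$ is not essential in $G\setminus u$ and is adjacent in $G\setminus u$ to some essential vertex of $G\setminus u$. By (i) the essential sets coincide; and since $u\notin D_{(\theta,w,w_1)}(G)$, the essential neighbours of $v$ in $G$ are exactly its essential neighbours in $G\setminus u$. Thus $v\in A_{(\theta,w,w_1)}(G\setminus u)$ iff $v\in A_{(\theta,w,w_1)}(G)$, which gives (iv). Subtracting the set identity in (iv) from $Q_{(\theta,w,w_1)}(G\setminus u)=Q_{(\theta,w,w_1)}(G)\setminus\{u\}$, and using $u\in A_{(\theta,w,w_1)}(G)$, then delivers (ii) via $P=Q\setminus A$. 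The whole argument is essentially bookkeeping: the only point that needs care is verifying that the ``or essential'' options in parts (b) and (c) of Lemma \ref{positive_vertex_deletion} are actually ruled out, and this is exactly where Theorem \ref{Gallai_Edmond} closes the loop.
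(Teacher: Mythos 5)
Your proposal is correct and follows the same route the paper intends: the paper derives this corollary directly from Theorem \ref{Gallai_Edmond} and Lemma \ref{positive_vertex_deletion} (with Lemma \ref{special_vertex_positive} supplying that $u$ is positive), and your write-up is exactly the natural filling-in of those details. The bookkeeping for (iv) and the derivation of (ii) via $P=Q\setminus A$ are both sound, since $u\in A_{(\theta,w,w_1)}(G)$ guarantees $u\notin P_{(\theta,w,w_1)}(G)$ and $u\notin D_{(\theta,w,w_1)}(G)$.
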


The next corollary is a generalization of \cite[Theorem 1.7]{KC} and it can be proved using similar argument as in \cite{KC}. Nevertheless we shall give the details of the proof.

\begin{cor}\label{Gallai_critical}\textnormal{(Gallai's Lemma)} Let $G$ be a connected graph for which all vertices are $(\theta,w,w_1)$-essential. Then $G$ is $(\theta,w,w_1)$-critical.
\end{cor}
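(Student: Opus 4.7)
The goal is to show $\textnormal{mult}(\theta, G, \eta_{(w,w_1)}) = 1$, and I will proceed by induction on $n = \vert V(G)\vert$. The base case $n = 1$ is immediate: if $G$ consists of a single vertex $u$, essentiality of $u$ combined with part (a) of Lemma~\ref{isolated_vertex} forces $\theta = w_1(u)$, so $\eta_{(w,w_1)}(G, x) = x - \theta$ has $\theta$ as a simple root and $G$ is $(\theta, w, w_1)$-critical.

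For the inductive step, assume the claim for smaller connected graphs, let $G$ be as in the hypothesis with $n \geq 2$, set $k = \textnormal{mult}(\theta, G, \eta_{(w,w_1)})$, and suppose for contradiction that $k \geq 2$. Fix any $u \in V(G)$; essentiality of $u$ yields $\textnormal{mult}(\theta, G \setminus u, \eta_{(w,w_1)}) = k - 1 \geq 1$, and Lemma~\ref{essential_vertex_exists} applied to $G \setminus u$ produces a vertex $v$ essential in $G \setminus u$, so $\textnormal{mult}(\theta, G \setminus uv, \eta_{(w,w_1)}) = k - 2$. The key structural observation is that $u$ and $v$ cannot be adjacent in $G$: otherwise $uv$ would be a path of length one and Corollary~\ref{path_interlacing} would give $\textnormal{mult}(\theta, G \setminus uv, \eta_{(w,w_1)}) \geq k - 1$, contradicting $k - 2$. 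Connectedness of $G$ then guarantees that every $u$-to-$v$ path in $G$ has length at least two and $P_{uv}(G)$ is nonempty.

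The plan for deriving the contradiction is to apply the Heilmann-Lieb identity (Lemma~\ref{heilman_leib}) to the pair $u, v$: each of the two terms on the left has multiplicity exactly $2k - 2$ at $\theta$, namely $(k-1) + (k-1) = k + (k - 2)$, while each summand on the right has multiplicity at least $2(k-1) = 2k - 2$ by Corollary~\ref{path_interlacing}. Matching the coefficient of $(x - \theta)^{2k - 2}$ produces an identity in which the left-hand side is a single real number while the right-hand side is a nonnegative sum of squares of nonzero reals, indexed by the paths in $P_{uv}(G)$ attaining the minimal multiplicity $k - 1$. I expect the main obstacle to be converting this leading-coefficient identity into an actual contradiction. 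The intended route is to iterate the construction along a shortest $u$-to-$v$ path $u = w_0, w_1, \ldots, w_\ell = v$: at each step apply part (a) of Lemma~\ref{general_properties_path} inside the current deleted graph to find a neighbor $w_{i+1}$ of $w_i$ making $w_i w_{i+1}$ an essential edge there, then promote this edge to an essential path in $G$ via Lemma~\ref{essential_vertex_path}; tracking how multiplicities descend along this chain together with the strict-positivity enforced by the sum of squares should be incompatible with $k \geq 2$. The subsidiary case $\theta = w_1(w_i)$, which blocks part (a) of Lemma~\ref{general_properties_path}, is to be handled separately using the recurrence in part (c) of Theorem~\ref{basic_recurrence} at $w_i$.
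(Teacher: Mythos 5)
Your setup is correct as far as it goes: the base case, the choice of $v$ essential in $G\setminus u$ via Lemma \ref{essential_vertex_exists}, the computation $\textnormal{mult}(\theta,G\setminus uv,\eta_{(w,w_1)})=k-2$, and the deduction from Corollary \ref{path_interlacing} that $u$ and $v$ cannot be adjacent are all sound (this last observation is essentially the paper's opening move, with the roles of the two vertices swapped). But from that point on you have a plan, not a proof, and the plan has a genuine gap. Extracting the coefficient of $(x-\theta)^{2k-2}$ from Lemma \ref{heilman_leib} only yields $f_u(\theta)f_v(\theta)-g(\theta)h(\theta)\ge 0$, where $f_u,f_v,g,h$ are the cofactors of $(x-\theta)^{k-1},(x-\theta)^{k-1},(x-\theta)^{k},(x-\theta)^{k-2}$ in the four polynomials; this is a difference of two products with no sign control on any factor, so it is not in tension with the nonnegativity of the right-hand side, and the paper develops no sign or interlacing machinery for these quotient values that would make it so. The iteration "along a shortest $u$--$v$ path" does not repair this: part (a) of Lemma \ref{general_properties_path} produces \emph{some} neighbour forming an essential edge, not the next vertex on a path you have chosen in advance, so the chain you describe cannot be steered toward $v$; the exceptional case $\theta=w_1(w_i)$ is deferred rather than handled; and the inductive hypothesis you set up at the start is never actually invoked. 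Nothing in the sketch ever produces the contradiction with $k\ge 2$.

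The paper closes the argument by an entirely different mechanism, which is the idea your proposal is missing. From the fact that every neighbour of $v$ is non-essential in $G\setminus v$ while $G\setminus v$ still has an essential vertex, one finds a $(\theta,w,w_1)$-special vertex in $G\setminus v$ and sets $A=A_{(\theta,w,w_1)}(G\setminus v)$. The Stability Lemma (Corollary \ref{Gallai_Edmond_decomposition}) then lets one peel off $A$ and count the multiplicity of $\theta$ in $(G\setminus v)\setminus A$ in two ways: deleting $v$ first and then $A$ gives $\sum_j \textnormal{mult}(\theta,Q_j,\eta_{(w,w_1)})=k-1+\vert A\vert$, where the $Q_j$ are the positive-multiplicity components; but since no $Q_j$ meets the neighbourhood of $v$, the $Q_j$ are already components of $G\setminus A$, and deleting some $a\in A$ from $G$ first (using that $a$ is essential in $G$) bounds the same sum by $k-2+\vert A\vert$. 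That clash is the contradiction. If you want to salvage your approach you would need to either import sign information about the polynomials $\eta_{(w,w_1)}(G\setminus u,x)/(x-\theta)^{k-1}$ at $\theta$ (which is not available in this paper) or switch to this counting argument.
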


\begin{proof} Suppose $G$ is not $(\theta,w,w_1)$-critical. Then $\textnormal {mult} (\theta, G,\eta_{(w,w_1)})=k\geq 2$ (Definition \ref{vertex_classification_dfn}). Now let $v\in V(G)$. Then $\textnormal {mult} (\theta, G\setminus v,\eta_{(w,w_1)})=k-1\geq 1$. Since $G$ is connected, $v$ is not an isolated vertex. Let $u$ be a vertex adjacent to $v$ in $G$. By Corollary \ref{path_interlacing},
$\textnormal {mult} (\theta, G\setminus uv,\eta_{(w,w_1)})\geq k-1$. This implies that $u$ is either $(\theta,w,w_1)$-neutral or $(\theta,w,w_1)$-positive in $G\setminus v$. This also means that all the vertices that are adjacent to $v$ must be either $(\theta,w,w_1)$-neutral or $(\theta,w,w_1)$-positive in $G\setminus v$.

Since $\textnormal {mult} (\theta, G\setminus v,\eta_{(w,w_1)})\geq 1$, by Lemma \ref{essential_vertex_exists}, $G\setminus v$ has at least one $(\theta,w,w_1)$-essential vertex. Together with the conclusion of the previous paragraph, we deduce that $G\setminus v$ has at least one $(\theta,w,w_1)$-special vertex.

Let $A=A_{(\theta,w,w_1)}(G\setminus v)$. By Corollary \ref{Gallai_Edmond_decomposition}, a $(\theta,w,w_1)$-essential vertex remains $(\theta,w,w_1)$-essential, a $(\theta,w,w_1)$-positive vertex remains $(\theta,w,w_1)$-positive and a $(\theta,w,w_1)$-neutral vertex remains $(\theta,w,w_1)$-neutral, upon deletion of a $(\theta,w,w_1)$-special vertex. Also  a $(\theta,w,w_1)$-special vertex remains $(\theta,w,w_1)$-special, upon deletion of a $(\theta,w,w_1)$-special vertex. Therefore  if $H$ is a component in  $(G\setminus v)\setminus A$, either  $\textnormal {mult} (\theta, H)>0$ and all the vertices in $H$ are $(\theta,w,w_1)$-essential, or $\textnormal {mult} (\theta, H)=0$.

Let $Q_1, Q_2,\dots, Q_l, T_1, T_2, \dots, T_m$ be all the components in $(G\setminus v)\setminus A$ where $\textnormal {mult} (\theta, Q_j)>0$ and $\textnormal {mult} (\theta, T_{j'})=0$ for all $j,j'$.
By part (a) of Theorem \ref{basic_recurrence}, we deduce that
\begin {equation}
\textnormal {mult} (\theta, (G\setminus v)\setminus A,\eta_{(w,w_1)})=\sum_{j=1}^l \textnormal {mult} (\theta, Q_j,\eta_{(w,w_1)}).\notag
\end {equation}
On the other hand, by applying Corollary \ref{Gallai_Edmond_decomposition} repeatedly (also  Lemma \ref{special_vertex_positive}), $\textnormal {mult} (\theta, (G\setminus v)\setminus A)=k-1+\vert A\vert$. So $\sum_{j=1}^l \textnormal {mult} (\theta, Q_j)=k-1+\vert A\vert$.

Let $a\in A$. Note that $v$ is not adjacent to any vertices in $\bigcup_i Q_i$ (by the conclusion of the first paragraph). Therefore all the $Q_i$'s are components in $G\setminus A$. Since $\textnormal {mult} (\theta, G\setminus a,\eta_{(w,w_1)})=k-1$, by applying Lemma \ref{interlacing_root} repeatedly,
\begin {equation}
\textnormal {mult} (\theta, (G\setminus a)\setminus (A\setminus \{a\}), \eta_{(w, w_{1})})\leq k-1+\vert A\setminus \{a\}\vert=k-2+\vert A\vert .\notag
\end {equation}
Again by part (a) of Theorem \ref{basic_recurrence}, we deduce that $\sum_{j=1}^l \textnormal {mult} (\theta, Q_j,\eta_{(w,w_1)})\leq k-2+\vert A\vert$, a contradiction. Hence $k=1$ and $G$ is $(\theta,w,w_1)$-critical.
\end{proof}

As a consequence of Corollary \ref{Gallai_Edmond_decomposition} and Corollary \ref{Gallai_critical}, we have the following;
\begin{cor}\label {P:C7} Let $A=A_{(\theta,w,w_1)}(G)$. Then
\begin {itemize}
\item [\textnormal{(a)}]  $A_{(\theta,w,w_1)}(G\setminus A)=\varnothing$, $D_{(\theta,w,w_1)}(G\setminus A)=D_{(\theta,w,w_1)}(G)$, $P_{(\theta,w,w_1)}(G\setminus A)=P_{(\theta,w,w_1)}(G)$, and $N_{(\theta,w,w_1)}(G\setminus A)=N_{(\theta,w,w_1)}(G)$.
\item [\textnormal{(b)}] $G\setminus A$ has exactly $\left(\vert A\vert+\textnormal {mult} (\theta, G,\eta_{(w,w_1)})\right)$ $(\theta,w,w_1)$-critical components.
\item [\textnormal{(c)}] If $H$ is a component of $G\setminus A$ then either $H$ is $(\theta,w,w_1)$-critical or $\textnormal {mult} (\theta, H,\eta_{(w,w_1)})=0$.
\item [\textnormal{(d)}] The subgraph induced by $D_{(\theta,w,w_1)}(G)$ consists of all the  $(\theta,w,w_1)$-critical components in $G\setminus A$.
\end {itemize}\qed
\end{cor}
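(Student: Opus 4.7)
The plan is to derive all four parts by iterating the Stability Lemma (Corollary \ref{Gallai_Edmond_decomposition}) and then invoking Gallai's Lemma (Corollary \ref{Gallai_critical}) on the components of $G\setminus A$.

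For part (a), I would induct on $\vert A\vert$. Enumerate $A=\{a_1,\dots,a_k\}$. Since $a_1\in A_{(\theta,w,w_1)}(G)$, the Stability Lemma gives that $D,P,N$ are unchanged under deletion of $a_1$ and that $A_{(\theta,w,w_1)}(G\setminus a_1)=A\setminus\{a_1\}$. In particular $a_2$ is still $(\theta,w,w_1)$-special in $G\setminus a_1$, so the Stability Lemma applies again. Iterating, after deleting all of $A$ we obtain $A_{(\theta,w,w_1)}(G\setminus A)=\varnothing$ and the three other classes are preserved as claimed.

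For part (c), let $H$ be a component of $G\setminus A$. If $H$ contains a $(\theta,w,w_1)$-essential vertex $v$, then by part (a) no vertex of $H$ is $(\theta,w,w_1)$-special (in $G\setminus A$); hence every neighbour of $v$ in $H$ must itself be $(\theta,w,w_1)$-essential (otherwise it would be special by definition). Propagating this along edges of the connected graph $H$ shows that every vertex of $H$ is $(\theta,w,w_1)$-essential in $G\setminus A$, and Gallai's Lemma (Corollary \ref{Gallai_critical}) then forces $H$ to be $(\theta,w,w_1)$-critical. If instead $H$ contains no essential vertex, then $\textnormal{mult}(\theta,H,\eta_{(w,w_1)})=0$: otherwise Lemma \ref{essential_vertex_exists} would produce an essential vertex in $H$, contradiction.

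For part (b), note that each $a_i$ is $(\theta,w,w_1)$-special in the graph obtained by deleting $a_1,\dots,a_{i-1}$, hence $(\theta,w,w_1)$-positive there by Lemma \ref{special_vertex_positive}. Therefore each deletion raises the multiplicity of $\theta$ by exactly $1$, giving $\textnormal{mult}(\theta,G\setminus A,\eta_{(w,w_1)})=\textnormal{mult}(\theta,G,\eta_{(w,w_1)})+\vert A\vert$. On the other hand, part (a) of Theorem \ref{basic_recurrence} yields $\textnormal{mult}(\theta,G\setminus A,\eta_{(w,w_1)})=\sum_{H}\textnormal{mult}(\theta,H,\eta_{(w,w_1)})$ over the components $H$ of $G\setminus A$, and by part (c) each nonzero summand equals $1$ and corresponds to a critical component. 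Equating the two counts gives the claimed number of $(\theta,w,w_1)$-critical components. Finally, for part (d): the induced subgraph on $D_{(\theta,w,w_1)}(G)$ in $G$ coincides with the induced subgraph on $D_{(\theta,w,w_1)}(G\setminus A)$ in $G\setminus A$ by part (a), and by part (c) this induced subgraph is precisely the union of the $(\theta,w,w_1)$-critical components of $G\setminus A$ (since the non-critical components contain no essential vertices).

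The only genuinely delicate step is the propagation argument in part (c), where one must be careful that the absence of special vertices in $G\setminus A$ (obtained from part (a)) combined with connectedness of $H$ really forces every vertex of $H$ to be essential once one such vertex is found; after that, Gallai's Lemma does all the remaining work. The multiplicity bookkeeping in (b) and the identification in (d) are then essentially formal.
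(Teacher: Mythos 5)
Your proposal is correct and follows exactly the route the paper intends: the paper states this corollary with no written proof, merely as a consequence of the Stability Lemma (Corollary \ref{Gallai_Edmond_decomposition}) and Gallai's Lemma (Corollary \ref{Gallai_critical}), and your iteration of the Stability Lemma for (a), the no-special-vertices propagation plus Gallai's Lemma for (c), and the multiplicity bookkeeping for (b) and (d) is the intended argument. The only point worth making explicit is the passage from ``essential in $G\setminus A$'' to ``essential in the component $H$'' before invoking Gallai's Lemma, which is supplied by Lemma \ref{union_essential_vertex} (equivalently, part (a) of Theorem \ref{basic_recurrence}).
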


\section{Connection with classical Gallai-Edmonds decomposition}

\begin{dfn}\label{dfn_deficiency} The \emph{deficiency} of a graph, denoted by $\textnormal{def} (G)$ is the number of vertices left uncovered by any maximum matching in $G$.
\end{dfn}

The following lemma follows easily from Lemma \ref{different_form_generalized_matching}.

\begin{lm}\label{zero_root_deficiency} For any edge weight function $w$, $\textnormal{mult}(0,G,\mu_w)=\textnormal{def} (G)$.\qed
\end{lm}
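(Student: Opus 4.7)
The plan is to identify the multiplicity of $0$ as a root of $\mu_w(G,x)$ with the lowest power of $x$ appearing in the polynomial with nonzero coefficient, and then match this to the definition of deficiency via the matching number.

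First, I will apply Lemma \ref{different_form_generalized_matching} to write
\[ \mu_{w}(G,x) =\sum_{r=0}^{\lfloor n/2\rfloor}(-1)^r c_r\, x^{n-2r}, \qquad c_r := \sum_{M \in M_r(G)} |w(M)|^2, \]
where $n=|V(G)|$. Let $\nu(G)$ denote the matching number of $G$, i.e., the size of a maximum matching. For each $r$, the set $M_r(G)$ is nonempty precisely when $r \le \nu(G)$. Crucially, because the edge weight function $w$ takes only nonzero complex values, $|w(M)|^2 > 0$ for every matching $M$, so each $c_r$ is a sum of strictly positive real numbers whenever $M_r(G) \ne \varnothing$; hence $c_r > 0$ if and only if $r \le \nu(G)$.

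Therefore the smallest exponent of $x$ occurring with a nonzero coefficient in $\mu_w(G,x)$ is exactly $n - 2\nu(G)$, which gives
\[ \textnormal{mult}(0,G,\mu_w) = n - 2\nu(G). \]
On the other hand, a maximum matching of $G$ covers $2\nu(G)$ vertices, leaving $n - 2\nu(G)$ vertices uncovered, so by Definition \ref{dfn_deficiency} we have $\textnormal{def}(G) = n - 2\nu(G)$. Combining these two identities yields $\textnormal{mult}(0,G,\mu_w)=\textnormal{def}(G)$, as required.

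There is essentially no obstacle here: the only point that genuinely uses a hypothesis is the positivity of the coefficients $c_r$, which is precisely why the paper insists that $w$ be nonzero on every edge (as already discussed after Lemma \ref{edge_weighted_to_matching}). Without this assumption the leading nonvanishing coefficient could in principle vanish due to cancellation, but since each $|w(M)|^2$ is a positive real, no cancellation can occur in the sum defining $c_r$.
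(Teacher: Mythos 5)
Your proof is correct and follows exactly the route the paper intends: the paper states that the lemma ``follows easily from Lemma \ref{different_form_generalized_matching},'' and your argument --- that each coefficient $c_r=\sum_{M\in M_r(G)}|w(M)|^2$ is strictly positive precisely when $r\le\nu(G)$ because $w$ never vanishes, so the lowest surviving power of $x$ is $n-2\nu(G)=\textnormal{def}(G)$ --- is precisely that easy argument, spelled out. No issues.
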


By Lemma \ref{zero_root_deficiency}, the edge weight function has no effect on the multiplicity of 0 as a root in $\mu_w(G,x)$. Assume that $w_1(u)=0$ for all $u\in V(G)$. Then by Lemma \ref{weighted_to_edge_weighted_matching},
$\eta_{(w,w_1)}(G,x) =\mu_{w} (G,x)$. Note also that $D_{(0,w,w_1)}(G)$ is the set of all vertices in $G$ which are not covered by at least one maximum matching of $G$. Also $N_{(0,w,w_1)}(G)=\varnothing$, for otherwise, there would be a vertex say $u$ with $\textnormal{mult} (0,G,\mu_w)=\textnormal{mult} (0,G\setminus u,\mu_w)$. But this means that there is a maximum matching that does not cover $u$ and so $u\in D_{(0,w,w_1)}(G)$, a contradiction (see \cite[Section 3.2 on p. 93]{Lo} for the details). Therefore $V(G)=D_{(0,w,w_1)}(G)\cup A_{(0,w,w_1)}(G)\cup P_{(0,w,w_1)}(G)$ which is the classical Gallai-Edmonds decomposition provided $w(e)=1$ for all $e \in E(G)$. Also  \cite[Lemma 3.2.2 on p. 95]{Lo} is a special case of the Stability Lemma (Corollary \ref{Gallai_Edmond_decomposition}).

\begin{lm}\label{fixed_weight_on_vertex} Suppose $w_1(u)=c$ for all $u\in V(G)$, where $c$ is a constant real number. Then $\eta_{(w,w_1)}(G,x+c) =\mu_{w} (G,x)$.
\end{lm}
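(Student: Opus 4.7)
The plan is to prove the identity by induction on $|V(G)|$, using the three-term recurrence in part (c) of Theorem \ref{basic_recurrence} for $\eta_{(w,w_1)}$ together with the parallel recurrence in part (c) of Lemma \ref{pre_basic_recurrence} for $\mu_w$. The key observation is that the substitution $x\mapsto x+c$ is tailor-made to cancel the $(x-c)$ factor that appears in the weighted recurrence once $w_1$ is forced to take the constant value $c$.

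The base case is $G=\varnothing$, where by convention $\eta_{(w,w_1)}(\varnothing,x)=1=\mu_w(\varnothing,x)$, so the equality at $x+c$ is trivial. For the inductive step, fix any $u\in V(G)$ and apply part (c) of Theorem \ref{basic_recurrence} with $w_1(u)=c$ to obtain
\[
\eta_{(w,w_1)}(G,x)=(x-c)\,\eta_{(w,w_1)}(G\setminus u,x)-\sum_{v\sim u}|w(e_{uv})|^2\,\eta_{(w,w_1)}(G\setminus uv,x).
\]
Evaluating at $x+c$ in place of $x$ turns the leading coefficient $(x-c)$ into $x$, yielding
\[
\eta_{(w,w_1)}(G,x+c)=x\,\eta_{(w,w_1)}(G\setminus u,x+c)-\sum_{v\sim u}|w(e_{uv})|^2\,\eta_{(w,w_1)}(G\setminus uv,x+c).
\]
By the induction hypothesis (applied to $G\setminus u$ and to each $G\setminus uv$, both of which inherit the constant vertex weight $c$), each term on the right equals the corresponding $\mu_w$ value, and the resulting identity is exactly the recurrence of Lemma \ref{pre_basic_recurrence}(c) for $\mu_w(G,x)$. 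Hence $\eta_{(w,w_1)}(G,x+c)=\mu_w(G,x)$.

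An alternative one-line proof uses the combinatorial expansion of $\eta_{(w,w_1)}$ as a sum over matchings (the Averbouch–Makowsky form stated after the subset-sum definition): substituting $w_1\equiv c$ turns every factor $(x-w_1(u))$ into $(x-c)$, and replacing $x$ by $x+c$ collapses each such factor to $x$, leaving precisely $\mu_w(G,x)$ by Lemma \ref{different_form_generalized_matching}. I expect no serious obstacle here; the only point worth flagging is to choose $u\in V(G)$ arbitrarily so that the induction works uniformly regardless of whether $u$ is isolated (in which case the sum over $v\sim u$ is empty and the recurrence degenerates correctly).
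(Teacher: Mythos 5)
Your proof is correct and follows essentially the same route as the paper's: induction on $\vert V(G)\vert$ using the three-term recurrence of Theorem \ref{basic_recurrence}(c) with $w_1(u)=c$, where the shift $x\mapsto x+c$ converts $(x-c)$ into $x$ and Lemma \ref{pre_basic_recurrence}(c) closes the argument. The only cosmetic differences are your choice of $G=\varnothing$ as the base case (the paper uses $\vert V(G)\vert=1$) and the added remark about the matchings-expansion shortcut, neither of which changes the substance.
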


\begin{proof} We shall prove by induction on $\vert V(G)\vert$. Suppose $\vert V(G)\vert=1$. Then $\eta_{(w,w_1)}(G,x)=x-c$. Therefore $\eta_{(w,w_1)}(G,x+c) =x=\mu_{w} (G,x)$. Assume it is true for all graphs with fewer vertices than $G$.

Let $u,v\in V(G)$. By induction, $\eta_{(w,w_1)}(G \setminus u,x+c)=\mu_w(G \setminus u,x)$ and $\eta_{(w,w_1)}(G \setminus uv,x+c)=\mu_w(G \setminus uv,x)$. So  $\eta_{(w,w_1)}(G,x+c) = x \mu_w(G \setminus u,x) - \sum_{v \sim u} \vert w(e_{uv})\vert^2 \mu_w(G \setminus uv,x)$ (by part (c) of Theorem \ref{basic_recurrence}). It then follows from by part (c) of Lemma \ref{pre_basic_recurrence}, that $\eta_{(w,w_1)}(G,x+c) =\mu_{w} (G,x)$.
\end{proof}

The next lemma follows from Lemma \ref{zero_root_deficiency} and Lemma \ref{fixed_weight_on_vertex}.

\begin{lm}\label{fixed_weight_vertex_root_deficiency} Suppose $w_1(u)=c$ for all $u\in V(G)$, where $c$ is a constant real number.  Then for any edge weight function $w$, $\textnormal{mult}(c,G,\eta_{(w,w_1)})=\textnormal{def} (G)$.\qed
\end{lm}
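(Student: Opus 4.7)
The plan is to deduce this immediately from the two preceding results, using a simple change-of-variable argument. The statement to prove is that $\textnormal{mult}(c, G, \eta_{(w,w_1)}) = \textnormal{def}(G)$ when the vertex weight is constantly $c$, and Lemma \ref{fixed_weight_on_vertex} gives the identity $\eta_{(w,w_1)}(G, x+c) = \mu_w(G,x)$, while Lemma \ref{zero_root_deficiency} gives $\textnormal{mult}(0, G, \mu_w) = \textnormal{def}(G)$. So the whole strategy is a root-shift chained with these two lemmas.

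First I would record the general fact that for any polynomial $f(x) \in \mathbb{R}[x]$ and any constant $c \in \mathbb{R}$, the multiplicity of $c$ as a root of $f(x)$ equals the multiplicity of $0$ as a root of $f(x+c)$. This is clear from writing $f(x) = (x-c)^{m} g(x)$ with $g(c) \neq 0$, so that $f(x+c) = x^{m} g(x+c)$ with $g(c) \neq 0$. Applying this with $f(x) = \eta_{(w,w_1)}(G,x)$ yields
\begin{equation}
\textnormal{mult}(c, G, \eta_{(w,w_1)}) = \textnormal{mult}(0, \eta_{(w,w_1)}(G, x+c)).\notag
\end{equation}

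Next, by Lemma \ref{fixed_weight_on_vertex} applied to the hypothesis $w_1(u) = c$ for all $u \in V(G)$, we have $\eta_{(w,w_1)}(G, x+c) = \mu_w(G,x)$ as polynomials in $x$, so
\begin{equation}
\textnormal{mult}(0, \eta_{(w,w_1)}(G, x+c)) = \textnormal{mult}(0, G, \mu_w).\notag
\end{equation}
Finally, by Lemma \ref{zero_root_deficiency}, $\textnormal{mult}(0, G, \mu_w) = \textnormal{def}(G)$, which combined with the above chain gives the desired equality.

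There is no real obstacle here; the argument is essentially a one-line composition of two established results with the elementary observation that polynomial root multiplicities are invariant under translation of the variable. The only place one might pause is to confirm that Lemma \ref{fixed_weight_on_vertex} holds for \emph{any} edge weight function $w$ (which it does, as the weight $w$ plays no role in its proof beyond appearing in both sides of the identity), so the conclusion indeed holds for any edge weight function as claimed.
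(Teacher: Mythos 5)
Your proof is correct and follows exactly the route the paper intends: the paper derives Lemma \ref{fixed_weight_vertex_root_deficiency} as an immediate consequence of Lemma \ref{zero_root_deficiency} and Lemma \ref{fixed_weight_on_vertex}, with the same implicit use of the invariance of root multiplicity under the substitution $x \mapsto x+c$ that you spell out. Nothing is missing.
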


As a consequence of Lemma \ref{fixed_weight_vertex_root_deficiency}, we see that if the weight on each vertex is a  constant, say $c$, then the edge weight function has no effect on the multiplicity of $c$ as a root of $\eta_{(w,w_1)}(G,x)$. In fact, it depends only on the structure of the graph. Note also that $D_{(c,w,w_1)}(G)$ is the set of all vertices in $G$ which are not covered by at least one maximum matching of $G$, $N_{(c,w,w_1)}(G)=\varnothing$  and  $V(G)=D_{(c,w,w_1)}(G)\cup A_{(c,w,w_1)}(G)\cup P_{(c,w,w_1)}(G)$.

\section{Connection with the Parter-Wiener theorem}

In separate papers, Parter \cite{P} and Wiener \cite{W} independently observed an important theorem about the existence of principal submatrices of a Hermitian matrix whose graph is a tree, in which the multiplicity of an eigenvalue increases. Recently, Johnson, Duarte and Saiago \cite{Johnson2} generalized this result by providing more structural information. It turns out that these results are just special cases of the Gallai-Edmonds structure theorem that we have developed in this paper.

Given an $n\times n$ Hermitian matrix $B=[b_{uv}]$, we can associate a graph $G$ to it as follows: let $G$ be the graph with $V(G)=\{1,2,\dots, n\}$ and $e_{uv}\in E(G)$ if and only if $b_{uv}\neq 0$, $u \not = v$. Clearly, for a given graph $G$, there are many Hermitian matrix $B=[b_{uv}]$ whose associated graph is $G$; moreover, we shall assign weights to $G$ using $B$ as follows: set $w(e_{uv})=b_{uv}$ if $e_{uv}\in E(G)$ with $u<v$ and set $w_1(u)=b_{uu}$ for all $u\in V(G)$. Consequently, $B=B_{(w,w_1)}(G)$ is the weighted adjacency matrix of $G$ so that the characteristic polynomial of $B$ is $\phi_{(w,w_1)}(G,x)$ and the eigenvalues of $B$ are the roots of $\phi_{(w,w_1)}(G,x)$.

For the rest of this section, let $T$ be a tree on $n$ vertices $1, 2, \ldots, n$ and suppose that $\mathcal{S}(T)$ is the set of all $n \times n$ Hermitian matrix whose graph is $T$. Let $m_{B}(\lambda)$ denote the multiplicity of $\lambda$ as an eigenvalue of $B$. Suppose $B \in \mathcal{S}(T)$. Let $B(u;u)$ be the matrix obtained from $B$ by deleting the $u$-th row and $u$-column. Note that $B(u;u)=B_{(w,w_1)}(T\setminus u)$.

In the literature, a vertex $v$ of a tree $T$ is called \emph{parter} if $m_{B(v;v)}(\lambda)=m_B(\lambda)+1$, \emph{neutral} if $m_{B(v;v)}(\lambda)=m_B(\lambda)$ and \emph{downer} if $m_{B(v;v)}(\lambda)=m_B(\lambda)-1$ (see \cite{Johnson, Johnson2, Johnson3, Johnson4}). On the other hand, by Corollary \ref{tree_matching_characteristic_identical}, $\phi_{(w,w_1)}(T,x)=\eta_{(w,w_1)}(T,x)$.  So parter, neutral and downer (relative to the eigenvalue $\lambda$) are just $(\lambda,w,w_1)$-positive, $(\lambda,w,w_1)$-neutral and $(\lambda,w,w_1)$-essential respectively, in the language of Godsil (Definition \ref{vertex_classification_dfn}). Furthermore, $m_{B}(\lambda) = \textnormal{mult}(\lambda, T, \eta_{(w, w_{1})})$.

The following result has been important in the recent development on possible multiplicities of eigenvalues among matrices in $\mathcal{S}(T)$ \cite{Johnson, Johnson2, Johnson3, Johnson4}.

\begin{thm}[Parter-Wiener]
Let $T$ be a tree on $n$ vertices and suppose $B \in \mathcal{S}(T)$ and $\lambda \in \mathbb{R}$ is such that $m_{B}(\lambda) \ge 2$. Then, there is a vertex $u$ of $T$ such that $m_{B(u;u)}(\lambda) = m_{B}(\lambda)+1$ and $\lambda$ occurs as an eigenvalue in direct summands of $B$ that correspond to at least three branches of $T$ at $u$.\hfill\qed
\end{thm}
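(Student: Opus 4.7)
The plan is to derive the Parter-Wiener theorem directly from the Gallai-Edmonds decomposition (Corollaries \ref{Gallai_Edmond_decomposition}, \ref{Gallai_critical}, and \ref{P:C7}) by a Steiner-tree counting argument on a contracted graph. Because $T$ is a tree, Corollary \ref{tree_matching_characteristic_identical} gives $\phi_{(w,w_1)}(T, x) = \eta_{(w, w_1)}(T, x)$, so the hypothesis $m_B(\lambda) \geq 2$ is exactly $\textnormal{mult}(\lambda, T, \eta_{(w, w_1)}) \geq 2$, and the Parter condition $m_{B(u;u)}(\lambda) = m_B(\lambda) + 1$ is precisely $u$ being $(\lambda, w, w_1)$-positive.

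I would first check that $A := A_{(\lambda, w, w_1)}(T) \neq \varnothing$. Lemma \ref{essential_vertex_exists} produces an essential vertex, while Gallai's Lemma (Corollary \ref{Gallai_critical}) would force $T$ to be $(\lambda, w, w_1)$-critical (hence $m_B(\lambda) = 1$) if every vertex were essential; thus some vertex is non-essential, and by the connectedness of $T$ such a vertex must sit next to an essential one, placing it in $A$. By Corollary \ref{P:C7}(b), $T \setminus A$ has exactly $|A| + m_B(\lambda) \geq |A| + 2$ critical components; call this set $\mathcal{C}$. I would contract every component of $T \setminus A$ to a single point to obtain a tree $\tilde T$ whose vertex set partitions as $A \cup \mathcal{C} \cup \mathcal{N}$ (with $\mathcal{N}$ the contracted non-critical components), and whose only edges are $A$--$A$, $A$--$\mathcal{C}$, or $A$--$\mathcal{N}$, since components of $T \setminus A$ are pairwise separated in $T$ by vertices of $A$. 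Let $S$ be the Steiner subtree of $\tilde T$ spanning $\mathcal{C}$; by minimality every leaf of $S$ lies in $\mathcal{C}$, so every $A$- or $\mathcal{N}$-vertex of $S$ is internal with $\deg_S \geq 2$.

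The heart of the argument is a double count showing that some $a^{*} \in A_S := A \cap V(S)$ satisfies $\deg_S(a^{*}) \geq 3$. Assume to the contrary that $\deg_S(a) = 2$ for every $a \in A_S$. Writing $e_{AA}, e_{A\mathcal{C}}, e_{A\mathcal{N}}$ for the three edge types in $S$, summing degrees over $A_S$ yields
\[
2 e_{AA} + e_{A\mathcal{C}} + e_{A\mathcal{N}} = 2|A_S|,
\]
while $|E(S)| = e_{AA} + e_{A\mathcal{C}} + e_{A\mathcal{N}} = |A_S| + |\mathcal{C}| + |\mathcal{N}_S| - 1$. Subtracting gives $e_{AA} = |A_S| - |\mathcal{C}| - |\mathcal{N}_S| + 1$, and combining $|A_S| \leq |A|$ with $|\mathcal{C}| \geq |A| + 2$ forces $e_{AA} \leq -1 - |\mathcal{N}_S| < 0$, a contradiction. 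The degenerate case $A_S = \varnothing$ is excluded separately because $\tilde T$ has no $\mathcal{C}$--$\mathcal{C}$ edges while $|\mathcal{C}| \geq 2$.

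Finally, I set $u = a^{*}$. Lemma \ref{special_vertex_positive} shows $u$ is $(\lambda, w, w_1)$-positive, giving $m_{B(u;u)}(\lambda) = m_B(\lambda) + 1$, i.e.\ $u$ is Parter. Since $S$ is a subtree of the tree $\tilde T$, unique-path uniqueness implies that two vertices of $S \setminus u$ lie in a common component of $\tilde T \setminus u$ iff they lie in a common component of $S \setminus u$. Each of the $\deg_S(u) \geq 3$ components of $S \setminus u$ contains a leaf---a vertex of $\mathcal{C}$---so at least three branches of $T$ at $u$ contain a vertex of a critical component of $T \setminus A$, and hence of $D_{(\lambda, w, w_1)}(T)$. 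The Stability Lemma (Corollary \ref{Gallai_Edmond_decomposition}) keeps these vertices essential after removing $u$, so $\lambda$ is a root of the weighted matching polynomial on each such branch; Corollary \ref{tree_matching_characteristic_identical} then identifies this with $\lambda$ being an eigenvalue of the corresponding direct summand of $B(u;u)$. The step I expect to be most delicate is the Steiner-tree double count together with the verification that $A$-vertices cannot serve as leaves of $S$.
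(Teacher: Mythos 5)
Your argument is correct, and it reaches the Parter--Wiener statement by a genuinely different route from the paper. The paper does not prove this theorem directly: it derives it as the $m_{B}(\lambda)\ge 2$ case of the Johnson--Duarte--Saiago theorem, which it proves by induction on $\left|A_{(\lambda,w,w_1)}(T)\right|$ (base case $|A|=1$ straight from part (b) of Corollary~\ref{P:C7}; inductive step by passing to a component $T'$ of $T\setminus v$ with $A_{(\lambda,w,w_1)}(T')\neq\varnothing$ and repairing the three witnessing subtrees when $v$ attaches to one of them). You instead run a single global count: contract the components of $T\setminus A$, take the Steiner subtree $S$ of the $|A|+m_{B}(\lambda)\ge |A|+2$ critical components guaranteed by Corollary~\ref{P:C7}(b), observe that every leaf of $S$ is critical so every $A$-vertex of $S$ is internal, and then the edge/degree count $e_{AA}=|A_S|-|\mathcal{C}|-|\mathcal{N}_S|+1\le -1-|\mathcal{N}_S|<0$ forces some special vertex of degree at least $3$ in $S$; Lemma~\ref{special_vertex_positive} makes it Parter, and the Stability Lemma (Corollary~\ref{Gallai_Edmond_decomposition}) together with additivity of multiplicity over components shows each of the three arms of $S$ at that vertex puts $\lambda$ into the spectrum of the corresponding direct summand. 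All the individual steps check out, including the two points you flag as delicate (the exclusion of $A_S=\varnothing$ because every edge of the contracted tree meets $A$, and the fact that $A$-vertices cannot be Steiner leaves); the only slightly loose phrasing is in showing $A\neq\varnothing$, where one should say that the essential and non-essential vertex classes are both nonempty so connectedness of $T$ yields an edge between them, whose non-essential endpoint is special. What the paper's induction buys is the stronger JDS statement, including the $m_{B}(\lambda)=1$ case and the refinement that the chosen vertex can be taken compatible with a prescribed $u$; what your count buys is a shorter, non-inductive proof that makes transparent why the surplus $|\mathcal{C}|-|A|\ge 2$ of critical components over special vertices forces a branch vertex of degree at least three.
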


A more general statement was proved by Johnson, Duarte and Saiago \cite{Johnson2} as follows.

\begin{thm}[Johnson-Duarte-Saiago]
Let $B$ be a Hermitian matrix whose graph is $T$, and suppose that there exists a vertex $u$ of $T$ and a real number $\lambda$ such that $\lambda \in \sigma(B) \cap \sigma(B(u;u))$ (here $\sigma(B)$ denotes the set of all eigenvalues of $B$). Then
\begin{itemize}
\item[\textnormal{(a)}] there is a vertex $v$ of $T$ such that $m_{B(v;v)}(\lambda) = m_{B}(\lambda)+1$;
\item[\textnormal{(b)}] if $m_{B}(\lambda) \ge 2$, then $v$ may be chosen so that the degree of $v$ is at least $3$ and so that there are at least three components $T_{1}$, $T_{2}$ and $T_{3}$ of $T-v$ such that $m_{B[T_{i}]}(\lambda) \ge 1$, $i=1,2,3$ (here $B[T_{i}]$ is the principal submatrix of $B$ from retention of the rows and columns which correspond to $T_{i}$ );
\item[\textnormal{(c)}] if $m_{B}(\lambda)=1$, then $v$ may be chosen so that the degree of $v$ is at least $2$ and so that there are two components $T_{1}$ and $T_{2}$ of $T-v$ such that $m_{B[T_{i}]}(\lambda) = 1$, $i=1,2$.
\end{itemize}
\end{thm}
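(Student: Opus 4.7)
The plan is to recast the theorem using the Gallai--Edmonds framework of Sections~4--5. Since $T$ is a tree, Corollary~2.12 gives $\phi_{(w,w_1)}(T,x) = \eta_{(w,w_1)}(T,x)$ and similarly $\phi_{(w,w_1)}(T\setminus u,x) = \eta_{(w,w_1)}(T\setminus u,x)$, so $m_B(\lambda) = \mathrm{mult}(\lambda, T, \eta_{(w,w_1)}) =: m$ and $m_{B(u;u)}(\lambda) = \mathrm{mult}(\lambda, T \setminus u, \eta_{(w,w_1)})$. The hypothesis then reads $m \geq 1$ and $\mathrm{mult}(\lambda, T\setminus u, \eta_{(w,w_1)}) \geq 1$, while the conclusion $m_{B(v;v)}(\lambda) = m+1$ in (a) says exactly that $v$ is $(\lambda,w,w_1)$-positive, i.e., $v \in A_{(\lambda,w,w_1)}(T) \cup P_{(\lambda,w,w_1)}(T)$ (Definition~4.3 and Lemma~5.1). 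To prove (a) it therefore suffices to exhibit such a positive vertex. Assume to the contrary that $V(T) = D_{(\lambda,w,w_1)}(T) \cup N_{(\lambda,w,w_1)}(T)$ with $A_{(\lambda,w,w_1)}(T) = \varnothing$; then any neighbor in $T$ of an essential vertex is itself essential, and connectivity of $T$ forces either $D_{(\lambda,w,w_1)}(T) = V(T)$ (whence Gallai's Lemma, Corollary~5.13, makes $T$ itself $(\lambda,w,w_1)$-critical, forcing $m=1$ and $\mathrm{mult}(\lambda, T\setminus u, \eta_{(w,w_1)}) = 0$ for every $u$, contradicting the hypothesis) or $D_{(\lambda,w,w_1)}(T) = \varnothing$ (contradicting Lemma~4.4 applied to the root $\lambda$ of $\eta_{(w,w_1)}(T,x)$). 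Hence a positive vertex exists.

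For (b) and (c) I would argue simultaneously by induction on $|V(T)|$. Pick a positive vertex $v$ from (a); then $\mathrm{mult}(\lambda, T \setminus v, \eta_{(w,w_1)}) = m+1$, and Theorem~2.2(a) gives $\sum_{i=1}^d \mathrm{mult}(\lambda, T_i, \eta_{(w,w_1)}) = m+1$ over the branches $T_1,\ldots,T_d$ of $T\setminus v$. For (c) with $m=1$, two branches of positive multiplicity must each have multiplicity exactly $1$ and $d \geq 2$ is automatic, so either we are done immediately or a single branch $T_1$ carries multiplicity $2$ and we recurse via (b) into $T_1$. For (b) with $m \geq 2$, either at least three branches are heavy (have positive multiplicity), giving the conclusion with $d \geq 3$, or some branch has multiplicity $\geq 2$ and we recurse into it. To exploit the Gallai--Edmonds structure in the recursion I would further arrange that $v \in A_{(\lambda,w,w_1)}(T)$; this set is non-empty when $m \geq 2$ by a mild variant of the argument in (a) (if $A = \varnothing$ and $m \geq 2$, the same dichotomy $D = V(T)$ or $D = \varnothing$ yields a contradiction). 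The Stability Lemma (Corollary~5.12) then makes the Gallai--Edmonds decomposition of each branch $T_i$ the restriction of that of $T$, so any positive vertex $v^* \in T_1$ produced by the inductive hypothesis is also positive in $T$, and $\deg_T(v^*) \geq \deg_{T_1}(v^*)$ transfers the degree bound.

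The hard part is showing that the heavy-branch count transfers from $T_1 \setminus v^*$ to $T \setminus v^*$. Let $u_1$ be the neighbor of $v$ in $T_1$ and $B^{T_1}_{u_1}$ the component of $T_1 \setminus v^*$ containing $u_1$. If $v^* = u_1$ the branches of $T \setminus v^*$ simply include those of $T_1 \setminus v^*$ unchanged, and the transfer is direct; otherwise the branch of $T \setminus v^*$ containing $v$ is the enlarged subtree
\[
E := B^{T_1}_{u_1} \cup \{v\} \cup T_2 \cup \cdots \cup T_d,
\]
while all other components of $T_1 \setminus v^*$ remain components of $T \setminus v^*$ with unchanged multiplicity at $\lambda$. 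Combining Theorem~2.2(a) with $\mathrm{mult}(\lambda, T\setminus v^*, \eta_{(w,w_1)}) = m+1$ and $\mathrm{mult}(\lambda, T_1 \setminus v^*, \eta_{(w,w_1)}) = \mathrm{mult}(\lambda, T_1, \eta_{(w,w_1)}) + 1$ yields
\[
\mathrm{mult}(\lambda, E, \eta_{(w,w_1)}) = m - \mathrm{mult}(\lambda, T_1, \eta_{(w,w_1)}) + \mathrm{mult}(\lambda, B^{T_1}_{u_1}, \eta_{(w,w_1)}).
\]
In the (b) recursion, when $T_1$ is the sole heavy branch of $T\setminus v$, this reduces to $\mathrm{mult}(\lambda, B^{T_1}_{u_1}, \eta_{(w,w_1)}) - 1$, and the truly delicate subcase is $\mathrm{mult}(\lambda, B^{T_1}_{u_1}, \eta_{(w,w_1)}) = 1$: then $E$ is not heavy and only two of the three inductively-heavy branches of $T_1\setminus v^*$ survive in $T \setminus v^*$. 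I expect the cleanest way to close this subcase is to bypass the induction there entirely and argue directly on the skeleton tree obtained by contracting each critical component of $T \setminus A_{(\lambda,w,w_1)}(T)$: since $A$-vertices have degree at least $2$ in the skeleton (Lemma~5.8) and Corollary~5.14 furnishes $|A_{(\lambda,w,w_1)}(T)| + m \geq 3$ critical-component nodes, a short degree count rules out the possibility that every $A$-node has skeleton-degree exactly $2$ when $m \geq 2$, and locates a special vertex whose removal produces at least three branches each carrying strictly more critical-component nodes than $A$-nodes --- which, by Corollary~5.14, is exactly the three-heavy-branches-of-degree-at-least-three condition required for (b).
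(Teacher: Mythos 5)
Your reduction to the Gallai--Edmonds framework, your direct argument for (a), and the overall inductive scheme for (b)--(c) are all in the spirit of the paper's proof (the paper inducts on $\vert A_{(\lambda,w,w_1)}(T)\vert$ rather than on $\vert V(T)\vert$, but the recursion into a branch containing special vertices is the same idea, and your treatment of (c) is sound because at most one of the three inductively produced heavy branches can be absorbed into the rest of the tree). The genuine gap is exactly where you flag it, and the skeleton-tree patch does not close it. The degree count does prove that when $m\geq 2$ some vertex of $A:=A_{(\lambda,w,w_1)}(T)$ has skeleton-degree at least $3$: if every $A$-node had degree $2$, then summing degrees over $A$-nodes would give $2\vert A\vert \geq \vert A\vert+(\vert A\vert+m)+s-1$ (every skeleton edge meets an $A$-node, and there are $\vert A\vert+m$ critical and $s$ non-critical component nodes), forcing $m+s\leq 1$. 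But three branches is not three \emph{heavy} branches: a branch at such a vertex may consist of a non-critical component, or may contain critical components exactly balanced by the $A$-vertices inside it, and nothing in the count prevents all but two branches from being light. The final clause, ``locates a special vertex whose removal produces at least three branches each carrying strictly more critical-component nodes than $A$-nodes,'' is therefore unsupported as written.

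The repair is already available inside your own setup, and it is what the paper's proof implicitly relies on. Arrange, as you propose, that both $v$ and the inductively produced $v^{*}$ lie in $A$ (a vertex special in a component of $T\setminus v$ is special in $T\setminus v$ by Lemma \ref{union_essential_vertex} and part (a) of Theorem \ref{basic_recurrence}, hence special in $T$ by the Stability Lemma, Corollary \ref{Gallai_Edmond_decomposition}). Then $A_{(\lambda,w,w_1)}(T\setminus v^{*})=A\setminus\{v^{*}\}$ still contains $v$, so the enlarged component $E$ of $T\setminus v^{*}$ containing $v$ contains a vertex that is special \emph{in $E$}, hence adjacent to a vertex $z$ that is essential in $E$ (Definition \ref{vertex_classification_dfn}), whence $\textnormal{mult}(\lambda,E,\eta_{(w,w_1)})=\textnormal{mult}(\lambda,E\setminus z,\eta_{(w,w_1)})+1\geq 1$. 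In other words, your ``truly delicate subcase'' ($T_1$ the sole heavy branch and $\textnormal{mult}(\lambda,B^{T_1}_{u_1},\eta_{(w,w_1)})=1$, which by your own formula would force $\textnormal{mult}(\lambda,E,\eta_{(w,w_1)})=0$) cannot occur: $E$ is always heavy, the three heavy branches always transfer, and the skeleton argument can be deleted. With that substitution the proof goes through and is essentially the paper's.
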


\begin{proof}
We sketch a proof based on the Gallai-Edmonds structure theorem (Corollary \ref{Gallai_Edmond_decomposition}, Corollary \ref{Gallai_critical}, Corollary \ref{P:C7}). Note that the condition $\lambda \in \sigma(B) \cap \sigma(B(u;u))$ means that either $m_{B}(\lambda) \ge 2$ or $m_{B}(\lambda)=1$ and $u$ is not $(\lambda, w, w_{1})$-essential in $T$. In the later, we observe that $A_{(\lambda, w, w_{1})}(T) \not = \emptyset$ since $T$ is connected. In fact, this is equivalent to saying that $\lambda$ is a root of $\eta_{(w,w_{1})}(T,x)$ and $A_{(\lambda, w,w_{1})}(T) \not = \emptyset$ (see Corollary \ref{Gallai_critical}). We shall consider these cases separately.

\noindent {\em Case I.}~~$m_{B}(\lambda) \ge 2$.

If $ A_{(\lambda, w, w_{1})}(T) = \{v\}$ then it follows immediately from part (b) of Corollary \ref{P:C7} that $v$ has the property required by (b). We proceed to prove the theorem by induction on $\left|A_{(\lambda, w, w_{1})}(T) \right|$.

Fix $v \in A_{(\lambda, w, w_{1})}(T)$ and consider a component $T'$ of $T \setminus v$ with $A_{(\lambda, w, w_{1})}(T') \not = \emptyset$. Such $T'$ exists since $A_{(\lambda, w, w_{1})}(T\setminus v) = A_{(\lambda,w,w_{1})}(T) \setminus v \not = \emptyset$ (Corollary \ref{Gallai_Edmond_decomposition}). By the inductive hypothesis, there exists $v' \in A_{(\lambda, w,w_{1})}(T')$ such that $T' \setminus v'$ consists of $T'_{1}$, $\ldots$, $T'_{k}$, $S'_{1}$, $\ldots$, $S'_{l}$ with $m_{B[T'_{i}]}(\lambda) \ge 1$ for all $1 \le i \le k$, $k \ge 3$, and $m_{B[S'_{i}]}(\lambda) = 0$ for all $1 \le i \le l$ (it is possible that there does not exist any $S'_{i}$).

Since $T$ is a tree, $v$ is joined to at most one of $T'_{i}$'s, $S'_j$'s or $v'$. If $v$ is not joined to any of the $T'_{i}$'s then we are done; otherwise, we may assume that $k=3$ and $v$ is joined to $T'_{3}$. Set $T_{1}=T'_{1}$, $T_{2}=T'_{2}$ and $T_{3}$ to be the component of $T \setminus v'$ containing $T'_{3}$ and $v$. It is readily deduced from $m_{B[T \setminus v']}(\lambda) \ge 3$ that $m_{B[T_{i}]}(\lambda) \ge 1$ for $i=1,2,3$, as desired.

\noindent {\em Case II.}~~$m_{B}(\lambda)=1$.

Since a similar argument can be used to settle this case, we omit the details.
\end{proof}

\end{document}